\documentclass[a4paper,12pt]{amsart}

\input xypic
\usepackage{amssymb}
\xyoption{all}

\textwidth 17cm
\oddsidemargin -0.3cm
\evensidemargin -0.3cm

\newtheorem{thm}{Theorem}[section]
\newtheorem{lemma}[thm]{Lemma}
\newtheorem{cor}[thm]{Corollary}
\newtheorem{prop}[thm]{Proposition}
\theoremstyle{definition}
\newtheorem{rem}[thm]{Remark}
\newtheorem{hyp}{Hypothesis}
\def\Mbar{\bar{M}}
\def\Nbar{\bar{N}}
\def\fbar{\bar{f}}
\def\GA{{\mathfrak{A}}}
\def\CA{{\mathcal{A}}}
\def\CC{{\mathcal{C}}}
\def\CF{{\mathcal{F}}}
\def\CG{{\mathcal{G}}}
\def\CS{{\mathcal{S}}}
\def\CT{{\mathcal{T}}}

\def\BZ{{\mathbf{Z}}}
\def\tf{{\tilde{f}}}

\def\add{\operatorname{add}\nolimits}
\def\Aut{\operatorname{Aut}\nolimits}
\def\can{{\mathrm{can}}}
\def\coker{\operatorname{coker}\nolimits}
\def\cone{\operatorname{cone}\nolimits}
\def\End{\operatorname{End}\nolimits}
\def\Ext{\operatorname{Ext}\nolimits}
\def\gr{{\operatorname{gr}\nolimits}}
\def\Hom{\operatorname{Hom}\nolimits}
\def\id{\operatorname{id}\nolimits}
\def\im{\operatorname{im}\nolimits}
\def\mMod{\operatorname{\!-mod}\nolimits}
\def\mstab{\operatorname{\!-stab}\nolimits}
\def\opp{{\operatorname{opp}\nolimits}}

\def\rad{\operatorname{rad}\nolimits}
\def\Res{\operatorname{Res}\nolimits}
\def\eps{\varepsilon}
\def\iso{\buildrel \sim\over\to}
\def\Rarr#1{\buildrel #1\over \longrightarrow}
\def\ie{{\em i.e.}}

\author{Jeremy Rickard}
\address{Jeremy Rickard~: University of Bristol, School of Mathematics, University Walk,
Bristol BS8 1TW, UK}
\email{J.Rickard@bristol.ac.uk}
\author{Rapha\"el Rouquier}
\address{Rapha\"el Rouquier~: Mathematical Institute,
University of Oxford, 24-29 St Giles', Oxford, OX1 3LB, UK}
\email{rouquier@maths.ox.ac.uk}

\title{Stable categories and reconstruction}
\begin{document}
\maketitle
\section{Introduction}
This work is an attempt towards a Morita theory for stable equivalences
between self-injective algebras. More precisely, given two
self-injective algebras $A$ and $B$ and an equivalence between their
stable categories, consider the set $\CS$ of images of simple 
$B$-modules inside the stable category of $A$. That set satisfies
some obvious properties of $\Hom$-spaces and it generates the stable category
of $A$. Keep now only $\CS$ and $A$. Can $B$ be reconstructed ? We show
how to reconstruct the graded algebra associated to the radical
filtration of (an algebra Morita equivalent to) $B$.
It would be interesting to develop further an obstruction theory
for the existence of
an algebra $B$ with that given filtration, starting only with $\CS$ 
(this might be studied in terms of localization of $A_\infty$-algebras).
Note that a result of Linckelmann \cite{Li} shows that, if we consider only
stable equivalence of Morita type, then $B$ is characterized by $\CS$
--- but this result does not provide a reconstruction of $B$ from $\CS$.

We also study a similar problem in the more general setting of a triangulated
category $\CT$. Given a finite set $\CS$
of objects satisfying $\Hom$-properties
analogous to those satisfied by the set of simple modules in the derived
category of a ring and assuming that the set generates $\CT$,
we construct a $t$-structure on $\CT$. In the case $\CT=D^b(A)$ and
$A$ is a symmetric algebra, the first author has shown \cite{Ri} that
there is a symmetric algebra $B$ with an equivalence
$D^b(B)\iso D^b(A)$ sending the set of simple $B$-modules to $\CS$.
The case of a self-injective algebra leads to a slightly more general
situation~: there is a finite dimensional differential graded algebra $B$ with
$H^i(B)=0$ for $i>0$ and for $i\ll 0$ with the same property as above.

\section{Notations}
Let $\CC$ be an additive category. Given $S$ a set of objects of $\CC$,
we denote by $\add S$ the full subcategory of $\CC$ of objects isomorphic
to finite direct sums of objects of $S$.

Let $k$ be a field and $A$ a finite dimensional $k$-algebra. We say
that $A$ is split if the endomorphism ring of every simple $A$-module is
$k$.
We denote by $A\mMod$ the category of finitely generated
left $A$-modules and by $D^b(A)$ its derived category. For $A$ self-injective,
we denote by $A\mstab$ the stable category, the quotient of $A\mMod$ by projective
modules. Given $M$ an $A$-module, we denote by $\Omega M$
the kernel of a projective cover of $M$ and by $\Omega^{-1}M$ the
cokernel of an injective hull of $M$.

\section{Simple generators for triangulated categories}
\subsection{Category of filtered objects}
\label{filtT}
Let $\CT$ be a triangulated category and $\CS$ a full subcategory of $\CT$.

We define a category $\CF$ as follows.

$\bullet\ $Its objects are
diagrams
$$M=(\cdots\to M_2\Rarr{f_2} M_1\Rarr{f_1} M_0\Rarr{\eps_0}N_0)$$
where $M_i$ is an object of $\CT$, $M_i=0$ for $i\gg 0$, such that
\begin{itemize}
\item[(i)]
$M_1\Rarr{f_1} M_0\Rarr{\eps_0}N_0$ is the beginning of a distinguished
triangle
\item[(ii)]
for all $i\ge 1$, the cone $N_{i-1}$ of $f_i$ is in $\add\CS$
\item[(iii)]
the canonical map $\Hom(N_0,S)\to \Hom(M_0,S)$ is surjective for all
$S\in\CS$
\item[(iv)]
the canonical map $\Hom(N_i,S)\to \Hom(M_i,S)$ is bijective for all
$S\in\CS$ and $i\ge 1$.
\end{itemize}

Note that $\eps_i:M_i\to N_i=\cone(f_{i+1})$
is well defined up to unique isomorphism for $i\ge 1$ thanks to property (iv).
For $i\ge 0$, we define a new object $M_{\ge i}$ of $\CF$ as
$\cdots \to M_{i+1}\Rarr{f_{i+1}} M_i\Rarr{\eps_i} N_i$.

\smallskip
$\bullet\ $Given another
diagram $M'$,
we define $\Hom_\CF(M,M')_0$ as the subspace of
$\Hom(N_0,N'_0)$ consisting of those maps $g$ such that there is
$h:M_0\to M'_0$ with $\eps'_0h=g\eps_0$.

We put $\Hom_\CF(M,M')_i=\Hom_\CF(M,M'_{\ge i})_0$ and
$\Hom_\CF(M,M')=\oplus_{i\ge 0}\Hom_\CF(M,M')_i$.

\smallskip
$\bullet\ $Let now $g_0\in\Hom(N_0,N'_0)$.
By (iv), there are maps $h_0,h_1,\ldots$ and $g_1,g_2,\ldots$
making the following diagrams commutative
$$\xymatrix{
M_{i+1}\ar[r]^{f_{i+1}}\ar[d]_{h_{i+1}} & M_i\ar[d]_{h_i} \ar[r]^{\eps_i} &
 N_i\ar[r]\ar[d]_{g_i} & M_{i+1}[1]\ar[d]_{h_{i+1}[1]}\\
M'_{i+1}\ar[r]^{f'_{i+1}} & M'_i \ar[r]^{\eps'_i} & N'_i\ar[r] & M'_{i+1}[1]
}$$

\begin{lemma}
The maps $g_i:N_i\to N'_i$ (for $i\ge 1$) depend
only on $g_0$.
\end{lemma}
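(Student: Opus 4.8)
The plan is to prove the case $i=1$ first and then bootstrap to all $i\ge 1$ by shifting. Recall that $M_{\ge j}\in\CF$ for every $j\ge 0$, that for $i\ge 1$ the level-$0$ distinguished triangle of $M_{\ge i-1}$ is $M_i\Rarr{f_i}M_{i-1}\Rarr{\eps_{i-1}}N_{i-1}$, and that the portion $h_{i-1},h_i,g_i$ of any commutative ladder over $g_0$ is precisely the data produced by one run of the construction for the pair $M_{\ge i-1},M'_{\ge i-1}$ started from $g_{i-1}$. Hence, granting the case $i=1$, an induction on $i$ — applying that case to $M_{\ge i-1},M'_{\ge i-1}$ with starting datum $g_{i-1}$, which by induction already depends only on $g_0$ — shows that $g_i$ depends only on $g_0$. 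So it suffices to treat $i=1$.

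For $i=1$, suppose $h_1,g_1$ and $\tilde h_1,\tilde g_1$ arise from two commutative ladders over the same $g_0$. Since $N'_1\in\add\CS$ by (ii), condition (iv) (extended to $\add\CS$ by additivity) makes precomposition with $\eps_1$ a bijection $\Hom(N_1,N'_1)\iso\Hom(M_1,N'_1)$; this is exactly what makes $g_1$ and $\tilde g_1$ well defined, as the unique maps with $g_1\eps_1=\eps'_1 h_1$ and $\tilde g_1\eps_1=\eps'_1\tilde h_1$. By injectivity of that bijection, $g_1=\tilde g_1$ will follow once we show that $d:=h_1-\tilde h_1\colon M_1\to M'_1$ is annihilated by $\eps'_1$.

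The key step locates $d$. Both ladders contain, at level $0$, a morphism of distinguished triangles from $M_1\Rarr{f_1}M_0\Rarr{\eps_0}N_0\Rarr{\delta_0}M_1[1]$ to its primed analogue with the \emph{same} third component $g_0$; equality of the two connecting squares gives $h_1[1]\circ\delta_0=\delta'_0 g_0=\tilde h_1[1]\circ\delta_0$, so $d[1]\circ\delta_0=0$. Rotating the triangle and applying $\Hom(-,M'_1[1])$ then shows that $d$ factors through $f_1$, say $d=c f_1$ for some $c\colon M_0\to M'_1$. But $\eps'_1 c$ is a map $M_0\to N'_1$ with $N'_1\in\add\CS$, so by the \emph{surjectivity} in (iii) (again extended to $\add\CS$) we may write $\eps'_1 c=u\eps_0$ for some $u\colon N_0\to N'_1$; hence $\eps'_1 d=\eps'_1 c f_1=u\eps_0 f_1=0$, since $\eps_0 f_1=0$. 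This gives $g_1=\tilde g_1$, and with the reduction above, the lemma.

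The main obstacle is precisely this key step. The point is that the ambiguity in the choice of $h_1$ is controlled by the connecting map $\delta_0$ — not by $f_1$ or $\eps_0$ individually — and that is what forces $d$ to factor through $f_1$; once this is in hand, the two ``reflection'' hypotheses finish the job, surjectivity in (iii) supplying the factorisation $u\eps_0$ that kills $d$ after composition with $\eps'_1$, and injectivity in (iv) turning $\eps'_1 d=0$ back into $g_1=\tilde g_1$. The reduction to $i=1$, including the identification of the tail of a ladder with a shifted run of the construction, is routine bookkeeping.
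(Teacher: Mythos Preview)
Your proof is correct and follows the same line as the paper's. The paper also reduces to $i=1$ (with the general case ``similar, by induction''), takes a map $p\colon M_0\to M'_1$, uses (iii) to factor $\eps'_1 p=r\eps_0$ so that $\eps'_1 p f_1=0$, and then uses (iv) to conclude $q=0$; your $c$, $u$, and $g_1-\tilde g_1$ are exactly the paper's $p$, $r$, and $q$. The one place where you are more explicit is the justification that the difference $d=h_1-\tilde h_1$ factors through $f_1$: you obtain this from $d[1]\circ\delta_0=0$ via the long exact sequence, whereas the paper simply starts from ``let $p\colon M_0\to M'_1$'' and the relation $q\eps_1=\eps'_1 p f_1$ without spelling out why the ambiguity takes this form.
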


\begin{proof}
Let us show that $g_1$ depends only on $g_0$. The general case is similar,
by induction.

Let $p:M_0\to M'_1$. We have to show that the only map $q:N_1\to N'_1$
such that $\eps'_1pf_1=q\eps_1$ is $q=0$.

By (iii), there is $r:N_0\to N'_1$ such that
$\eps'_1 p=r\eps_0$. So, $\eps'_1pf_1=r\eps_0f_1=0$.
By (iv), we deduce from $q\eps_1=0$ that $q=0$.
$$\xymatrix{
M_1\ar[rrr]^{\eps_1} \ar[rd]^{f_1} &&& N_1\ar@{.>}[dd]^q \\
& M_0\ar[r]^{\eps_0}\ar[dl]_{p} & N_0 \ar[rd]^{r} \\
M'_1\ar[rrr]_{\eps'_1} &&& N'_1 
}$$
\end{proof}

\smallskip
Let $g_0\in\Hom_\CF(M,M')_i$ and $g'_0\in\Hom_\CF(M',M'')_j$.
We define the product
$g'_0g_0$ as the composition
$N_0\Rarr{g_0} N'_i\Rarr{g'_i}N''_{i+j}$.

\begin{lemma}
Assume $\Hom(S,T[n])=0$ for all $S,T\in\CS$ and $n<0$.
Let $M$ be an object of $\CF$. Then, 
the canonical map
$\Hom(N_0,S)\to \Hom(M_0,S)$ is an isomorphism.
\end{lemma}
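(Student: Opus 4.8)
The plan is to note that surjectivity of the canonical map $\can:\Hom(N_0,S)\to\Hom(M_0,S)$ is precisely hypothesis (iii), so only injectivity needs to be established. For that I would apply the cohomological functor $\Hom(-,S)$ to the distinguished triangle $M_1\Rarr{f_1}M_0\Rarr{\eps_0}N_0\to M_1[1]$ provided by (i), yielding an exact sequence
$$\Hom(M_1[1],S)\to\Hom(N_0,S)\Rarr{\eps_0^*}\Hom(M_0,S),$$
where $\eps_0^*=\can$. Hence the kernel of $\can$ is the image of $\Hom(M_1[1],S)=\Hom(M_1,S[-1])$, and it suffices to prove that this group vanishes.

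To prove $\Hom(M_1,S[-1])=0$ I would in fact show, by descending induction on $i$, that $\Hom(M_i,S[-1])=0$ for every $i\ge 1$. The base of the induction is the hypothesis that $M_i=0$ for $i\gg 0$. For the inductive step, apply $\Hom(-,S[-1])$ to the distinguished triangle $M_{i+1}\Rarr{f_{i+1}}M_i\Rarr{\eps_i}N_i\to M_{i+1}[1]$, whose cone $N_i$ lies in $\add\CS$ by (ii); one obtains an exact sequence
$$\Hom(N_i,S[-1])\to\Hom(M_i,S[-1])\to\Hom(M_{i+1},S[-1]).$$
The right-hand term is $0$ by the inductive hypothesis, and the left-hand term is $0$ because $N_i$ is a finite direct sum of objects of $\CS$ and $\Hom(S',S[-1])=0$ for all $S'\in\CS$, this being the $n=-1$ instance of the standing assumption $\Hom(S,T[n])=0$ for $n<0$. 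Therefore $\Hom(M_i,S[-1])=0$, which closes the induction.

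Feeding $i=1$ into the first exact sequence shows $\eps_0^*=\can$ is injective, and together with its surjectivity from (iii) this gives the asserted isomorphism. I do not anticipate a serious obstacle here: the argument is a short dévissage, and the only points deserving attention are that the descending induction is legitimate thanks to the finiteness condition $M_i=0$ for $i\gg 0$, and that one uses the vanishing hypothesis only in the single degree $n=-1$ — the higher-degree vanishing plays no role in this lemma.
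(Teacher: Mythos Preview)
Your proof is correct and follows the same approach as the paper: a descending induction on $i$ (using that $M_i=0$ for $i\gg 0$) to show the relevant $\Hom$-group vanishes, then exactness at $\Hom(N_0,S)$ in the long exact sequence coming from the triangle $M_1\to M_0\to N_0\rightsquigarrow$. The only cosmetic difference is that the paper records the marginally stronger inductive statement $\Hom(M_i,S[n])=0$ for all $n<0$, whereas you carry through only the case $n=-1$ that is actually needed; as you rightly observe, the argument is the same either way.
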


\begin{proof}
By induction on $-i$, we see that $\Hom(M_i,S[n])=0$ for $n<0$ and
$S\in\CS$. It follows that $\Hom(M_1[1],S)=0$, hence
the canonical map
$\Hom(N_0,S)\to \Hom(M_0,S)$ is injective, as well as being surjective
by assumption.
\end{proof}

\subsection{$t$-structures}
Let $k$ be a field and assume $\CT$ is a $k$-linear triangulated category.

We assume from now on the following
\begin{hyp}
\label{hyp2}
\begin{itemize}
\item[(1)]
$\Hom(S,T)=k^{\delta_{S,T}}$ for $S,T\in \CS$
\item[(2)]
$\CS$ generates $\CT$ as a triangulated category
\item[(3)] $\Hom(S,T[n])=0$ for
$S,T\in\CS$ and $n<0$.
\end{itemize}
\end{hyp}

\subsubsection{}
\label{tstruc}

\begin{lemma}
\label{reorder}
Given $N\in\CT$, there is a sequence
$0=M_r\Rarr{f_r}\cdots\to M_2\Rarr{f_2} M_1\Rarr{f_1} M_0=N$
and $d:\BZ_{>0}\to \BZ$ non increasing
such that $\cone(f_i)[d(i)]\in\CS$.

For such a sequence, the maps
$M_{r-1}\to N$ and $N\to \cone(f_1)$ are non zero.
\end{lemma}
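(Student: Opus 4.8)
The plan is to treat the two assertions separately. For the existence statement I would first produce a sequence of the desired shape while ignoring monotonicity of the shifts, and then rearrange it. Let $\CT'$ be the full subcategory of $\CT$ consisting of those objects $N$ admitting a sequence $0=M_r\to\cdots\to M_0=N$ in which every $\cone(f_i)$ is a shift of an object of $\CS$. Then $\CT'$ contains every shift of every object of $\CS$, it is stable under $[\pm1]$, and it is stable under cones: given a distinguished triangle $X\to Y\to Z\to X[1]$ with $X,Z\in\CT'$, one obtains a sequence for $Y$ by induction on the length of a sequence for $Z$, at each stage forming the fibre $Y_1$ of the composite $Y\to Z\to R$ (where $Z_1\to Z\to R$ is the top step of a sequence for $Z$) and using the octahedral axiom to identify the cone of the induced map $X\to Y_1$ with $Z_1$. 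Stability under finite direct sums then follows from the split triangle $X\to X\oplus Y\to Y\to X[1]$. Hence $\CT'$ is a triangulated subcategory containing $\CS$, so $\CT'=\CT$ by Hypothesis~\ref{hyp2}(2).

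Next I would make $d$ non-increasing by a bubble-sort. Writing $\cone(f_i)\cong S_i[-d(i)]$, suppose $d(i)<d(i+1)$ for some $i$. The octahedral axiom for $M_{i+1}\Rarr{f_{i+1}}M_i\Rarr{f_i}M_{i-1}$ places the cone $E$ of $f_if_{i+1}$ in a distinguished triangle $\cone(f_{i+1})\to E\to\cone(f_i)\Rarr{\delta}\cone(f_{i+1})[1]$ with $\delta\in\Hom(S_i,S_{i+1}[d(i)-d(i+1)+1])$. As $d(i)-d(i+1)+1\le0$, Hypothesis~\ref{hyp2}(3), together with Hypothesis~\ref{hyp2}(1) in the borderline case $d(i+1)=d(i)+1$, forces $\delta=0$ unless $d(i+1)=d(i)+1$ and $S_i=S_{i+1}$. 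If $\delta=0$ then $E\cong\cone(f_i)\oplus\cone(f_{i+1})$, and replacing $M_i$ by the fibre of $M_{i-1}\to E\to\cone(f_{i+1})$ gives a sequence for $N$ with the layers at positions $i$ and $i+1$ interchanged and with strictly fewer inversions in $d$. If $\delta\ne0$ then $\delta$ is a nonzero endomorphism of $S_i[-d(i)]$, hence an isomorphism because $\End(S_i)=k$, so $E=0$, i.e.\ $f_if_{i+1}$ is an isomorphism; deleting $M_i$ (and its neighbour $M_{i-1}$) and re-gluing along $f_if_{i+1}$ gives a strictly shorter sequence still of the required type. A lexicographic induction on (length of sequence, number of inversions of $d$) completes the existence statement. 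This rearrangement is the step I expect to require the most care: the crucial point is that when the naive interchange is obstructed — a nonzero $\delta$ — the two layers must differ by a shift $[1]$ over the same simple, so $\delta$ becomes invertible precisely because $\End(S_i)$ is a \emph{field}, and the obstruction is resolved by shortening the sequence instead.

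For the non-vanishing, fix a sequence as in the statement; I may assume $r\ge2$, since for $r\le1$ the two maps in question are the identity of the nonzero object $N$. Suppose $N\to\cone(f_1)$ were zero. Then the distinguished triangle $M_1\Rarr{f_1}N\to\cone(f_1)\to M_1[1]$ contains a zero map, so it splits; in particular $f_1$ is a split epimorphism and $M_1\cong N\oplus\cone(f_1)[-1]\cong N\oplus S_1[-d(1)-1]$. But the truncation $0=M_r\to\cdots\to M_1$ realises $M_1$ with layers $S_i[-d(i)]$ for $i\ge2$, and $\Hom(S_i[-d(i)],S_1[-d(1)-1])=\Hom(S_i,S_1[d(i)-d(1)-1])=0$ because $d(i)-d(1)-1\le d(2)-d(1)-1\le-1$, by Hypothesis~\ref{hyp2}(3); therefore $\Hom(M_1,S_1[-d(1)-1])=0$, contradicting that $S_1[-d(1)-1]$ is a nonzero direct summand of $M_1$.

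The non-vanishing of the composite $M_{r-1}\to N$ is dual. If it were zero, its cone would be $N\oplus M_{r-1}[1]\cong N\oplus S_r[-d(r)+1]$, whereas the iterated octahedral axiom realises $\cone(M_{r-1}\to N)$ with layers $S_i[-d(i)]$ for $i\le r-1$; since $\Hom(S_r[-d(r)+1],S_i[-d(i)])=\Hom(S_r,S_i[d(r)-d(i)-1])=0$ (here $d(r)-d(i)-1\le-1$), we obtain $\Hom(S_r[-d(r)+1],\cone(M_{r-1}\to N))=0$, contradicting the nonzero summand $S_r[-d(r)+1]$. Hence both maps are non-zero.
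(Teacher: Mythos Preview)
Your proof is correct and follows essentially the same route as the paper's: produce an arbitrary $\CS$-filtration from generation, then bubble-sort via the octahedral axiom (swapping layers when the connecting map vanishes, shortening the sequence when it is an isomorphism), and finally derive the non-vanishing of the two extreme maps from the splitting forced by a zero morphism in a triangle together with the $\Hom$-vanishing of Hypothesis~\ref{hyp2}(3). The only differences are cosmetic: you spell out that the filtrable objects form a triangulated subcategory and you give both non-vanishing arguments in full, whereas the paper dispatches the second with ``similar''; your lexicographic induction on (length, inversions) makes explicit what the paper leaves as ``by induction''.
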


\begin{proof}
Since $\CT$ is generated by $\CS$, there is a sequence
$0=M_r\to\cdots\to M_2\Rarr{f_2} M_1\Rarr{f_1} M_0=N$
and $d:\BZ_{>0}\to \BZ$
such that $\cone(f_i)[d(i)]\in\CS$.

We put $N_i=\cone(f_i)=S_i[-d(i)]$ with
$S_i\in\CS$.
Take $i$ such that $d(i)>d(i-1)$.
Let $T$ be the cone of $f_{i-1}f_i:M_i\to M_{i-2}$.
The octahedral axiom gives a distinguished triangle
$S_i[-d(i)]\to T\to S_{i-1}[-d(i-1)]\rightsquigarrow$. 

Assume
the morphism $S_{i-1}[-d(i-1)]\to S_i[-d(i)+1]$ is
non zero. Then it is an isomorphism and  $d(i)=d(i-1)+1$.
It follows that $T=0$ and $f_{i-1}f_i$ is an isomorphism.
Consequently,
$$0=M_r\to\cdots\to M_{i+1}\Rarr{f_{i-1}f_if_{i+1}}M_{i-2}
\to\cdots\to M_2\Rarr{f_2} M_1\Rarr{f_1} M_0=N$$
is a new sequence with successive cones being shifts
of objects of $\CS$.

By induction, we can assume that the
morphism $S_{i-1}[-d(i-1)]\to S_i[-d(i)+1]$
is zero. Then, $T\simeq N_i\oplus N_{i-1}$. There
is an object $M'_{i-1}$ and distinguished triangles
$M_i\to M'_{i-1}\to N_{i-1}\rightsquigarrow$ and
$M'_{i-1}\to M_{i-2}\to N_i\rightsquigarrow$.
Put $M'_j=M_j$ for $j\not=i-1$.
So,
$$0=M'_r\to\cdots \to M'_2\to M'_1\to M'_0=N$$
is a new sequence with the same cones
as in the original sequence except the $i$ and $i-1$ ones
which have been swapped.
By induction, we can reorder the cones in the sequence
so that $d$ is non increasing.

\smallskip
Assume the map $M_{r-1}\to N$ is zero. Let 
$T$ be its cone. Then $T\simeq N\oplus M_{r-1}[1]$.
Note that $T$ is filtered by the $S_i[-d(i)]$ with
$-d(i)< -d(r)+1$, hence
$\Hom(M_{r-1}[1],T)=0$. So we have a contradiction.
The case of the map $N\to N_1$ is similar.
\end{proof}

Let $\CT^{\le 0}$ (resp. $\CT^{>0}$)
be the full subcategory of objects $N$ in $\CT$ such that there is
a sequence $0=M_r\to\cdots\to M_2\Rarr{f_2}M_1\Rarr{f_1}M_0=N$ with
$\cone(f_i)$ a direct sum of objects $S[r]$ with $S\in\CS$ and $r\ge 0$
(resp. $r<0$).

\begin{prop}
$(\CT^{\le 0},\CT^{>0})$ is a bounded t-structure on $\CT$.
\end{prop}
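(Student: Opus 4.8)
The plan is to verify, for the pair $(\CT^{\le 0},\CT^{>0})$, the three axioms of a t-structure --- stability of $\CT^{\le 0}$ and $\CT^{>0}$ under the appropriate shifts, the $\Hom$-orthogonality between them, and the existence of truncation triangles --- and then boundedness. Stability under shifts is immediate from the definitions: shifting a defining sequence of $N\in\CT^{\le 0}$ by $[1]$ produces a defining sequence of $N[1]$ whose cones are direct sums of the $S[r+1]$ with $r+1\ge 1\ge 0$, so $\CT^{\le 0}[1]\subseteq\CT^{\le 0}$, and symmetrically $\CT^{>0}[-1]\subseteq\CT^{>0}$. For the orthogonality $\Hom(N,N')=0$ when $N\in\CT^{\le 0}$ and $N'\in\CT^{>0}$, I would argue by induction along the two filtrations: applying $\Hom(-,N')$ to the triangles of a defining sequence of $N$ and inducting on its length reduces the claim to $\Hom(S[r],N')=0$ for $S\in\CS$ and $r\ge 0$, and then applying $\Hom(S[r],-)$ to a defining sequence of $N'$ and inducting again reduces it to $\Hom(S[r],T[s])=0$ for $S,T\in\CS$ and $r\ge 0>s$. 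This holds because $\Hom(S[r],T[s])=\Hom(S,T[s-r])$ with $s-r<0$, so Hypothesis \ref{hyp2}(3) applies.

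The substantive point is the existence of truncation triangles, and for that I would invoke Lemma \ref{reorder}. Given $N$, it provides a sequence $0=M_r\Rarr{f_r}\cdots\Rarr{f_2}M_1\Rarr{f_1}M_0=N$ with $\cone(f_i)=S_i[-d(i)]$ ($S_i\in\CS$) and $d$ non increasing. Since $d$ is non increasing, fix $m\in\{0,\dots,r\}$ with $d(i)>0$ for $i\le m$ and $d(i)\le 0$ for $i>m$, and split the sequence at $m$. The tail $0=M_r\to\cdots\to M_{m+1}\Rarr{f_{m+1}}M_m$ realizes $M_m$ as an iterated cone of the objects $S_i[-d(i)]$ ($i>m$), all with $-d(i)\ge 0$, so $M_m\in\CT^{\le 0}$. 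For the other half, I would show by induction on $m$ using the octahedral axiom that the cone of the composite $M_m\Rarr{f_1\cdots f_m}M_0=N$ carries a filtration whose subquotients are $\cone(f_1),\dots,\cone(f_m)$, each an $S_i[-d(i)]$ with $-d(i)<0$; hence $\cone(M_m\to N)\in\CT^{>0}$, and $M_m\to N\to\cone(M_m\to N)\rightsquigarrow$ is the required triangle. The two extreme values of $m$ give the degenerate triangles $N\Rarr{\id}N\to 0$ (when $m=0$, recording that $N\in\CT^{\le 0}$) and $0\to N\Rarr{\id}N$ (when $m=r$, recording that $N\in\CT^{>0}$).

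Finally, for boundedness I would note that, by Lemma \ref{reorder} (or already by Hypothesis \ref{hyp2}(2)), every $N$ is an iterated cone of finitely many objects $S_i[r_i]$; shifting that sequence shows $N[n]\in\CT^{\le 0}$ and $N[-n]\in\CT^{>0}$ for $n\gg 0$, which is the boundedness condition. I expect the main obstacle to be the octahedral induction in the previous paragraph, which repackages an iterated cone as a single truncation triangle; but the real work has in fact already been carried out in Lemma \ref{reorder}, since it is that lemma --- and through it Hypothesis \ref{hyp2}(1) and (2) --- that permits the filtration of an arbitrary object to be reordered so that the non-negative shifts lie below the negative ones, which is exactly what makes the splitting at $m$ possible.
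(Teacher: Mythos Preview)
Your proof is correct and follows essentially the same route as the paper's: orthogonality by double induction on the filtrations, stability under shifts from the definitions, and the truncation triangle obtained by splitting a reordered sequence from Lemma~\ref{reorder} at the index where $d$ changes sign, with the cone of the composite lying in $\CT^{>0}$ by an octahedral induction. You spell out the octahedral step and the boundedness verification more explicitly than the paper does, but the argument is the same.
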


\begin{proof}
By induction, we see there is no non-zero map
from an object of $\CT^{\le 0}$ to an object of
$\CT^{>0}$. Furthermore, we have
$\CT^{\le 0}[1]\subseteq \CT^{\le 0}$ and
$\CT^{>0}\subseteq \CT^{>0}[1]$.

Let $N\in\CT$. Pick a sequence as in Lemma \ref{reorder}.
Take $s$ such that $d(s)>0$ and $d(s+1)\le 0$.
Let $L$ be the cone of $f_1\cdots f_s:M_s\to N$.
We have a distinguished triangle
$$M_s\to N\to L\rightsquigarrow$$
with $M_s\in\CT^{\le 0}$ and $L\in\CT^{>0}$.
\end{proof}

We have a characterization of $\CT^{\ge 0}$ and $\CT^{\le 0}$~:
\begin{prop}
\label{charact}
Let $N\in\CT$.
Then, 
$N\in\CT^{\le 0}$ if and only if
$\Hom(N,S[i])=0$ for $S\in\CS$ and $i<0$.

Similarly,
$N\in\CT^{\ge 0}$ if and only if
$\Hom(S[i],N)=0$ for $S\in\CS$ and $i>0$.
\end{prop}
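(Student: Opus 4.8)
The plan is to prove both characterizations together, using the t-structure axioms established in the previous Proposition together with the building-block vanishing from Hypothesis \ref{hyp2}(3). I would first record the two easy implications. If $N\in\CT^{\le 0}$, then $N$ is obtained from objects $S[r]$ with $S\in\CS$ and $r\ge 0$ by iterated cones, so by induction on the length of a filtration, using the long exact sequences $\Hom(-,S'[i])$ and the fact that $\Hom(S[r],S'[i])=\Hom(S,S'[i-r])=0$ whenever $i-r<0$ (Hypothesis \ref{hyp2}(3)), we get $\Hom(N,S'[i])=0$ for $i<0$. Dually, if $N\in\CT^{\ge 0}$, a filtration with cones $S[r]$, $r\le 0$, shows $\Hom(S'[i],N)=0$ for $i>0$. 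Note $\CT^{\ge 0}=\CT^{>0}[1]$ consists of objects filtered by $S[r]$ with $r\le 0$, so the two statements are formally parallel.

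For the converse, I would use that $(\CT^{\le 0},\CT^{>0})$ is already known to be a bounded t-structure, so every $N$ sits in a distinguished triangle $\tau^{\le 0}N\to N\to\tau^{>0}N\rightsquigarrow$ with $\tau^{\le 0}N\in\CT^{\le 0}$ and $\tau^{>0}N\in\CT^{>0}=\CT^{\ge 1}$. Suppose $\Hom(N,S[i])=0$ for all $S\in\CS$, $i<0$; I want $\tau^{>0}N=0$. Applying $\Hom(-,S[i])$ to the triangle and using the already-proven easy direction (which gives $\Hom(\tau^{\le 0}N,S[i])=0$ for $i<0$ and $\Hom(\tau^{\le 0}N,S[i])=0$ also for... ), one reduces to showing that $\tau^{>0}N$, an object with $\Hom(N,S[i])=0$ for $i<0$ that lies in $\CT^{\ge 1}$, satisfies a vanishing forcing it to be zero. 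Concretely: the cone of $N\to\tau^{>0}N$ is $(\tau^{\le 0}N)[1]\in\CT^{\le 0}[1]\subseteq\CT^{\le 0}$, so $\Hom(\tau^{>0}N,S[i])$ and $\Hom(N,S[i])$ are related by a long exact sequence whose third term involves $\Hom((\tau^{\le 0}N)[1],S[i])=\Hom(\tau^{\le 0}N,S[i-1])$, which vanishes for $i-1<0$, i.e. $i\le 0$. Hence $\Hom(\tau^{>0}N,S[i])\cong\Hom(N,S[i])=0$ for $i<0$, and also for $i=0$ one has to be slightly careful but the interesting range is $i<0$. Now $P:=\tau^{>0}N\in\CT^{\ge 1}$ has a filtration with cones $S[r]$, $r<0$; I claim such a nonzero $P$ must have $\Hom(P,S'[i])\ne0$ for some $S'\in\CS$, $i<0$. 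Take a filtration $0=M_r\to\cdots\to M_0=P$ as in Lemma \ref{reorder} (reordered, $d$ non-increasing, with all $d(j)<0$, i.e. all cones $S[-d(j)]$ shifted up); the last statement of Lemma \ref{reorder} says the map $P=M_0\to\cone(f_1)=S_1[-d(1)]$ is non-zero, and $-d(1)<0$ with $-d(1)$ the most negative shift — but actually I want a map out of $P$ to the \emph{bottom} piece. Re-examining: with $d$ non-increasing, $\cone(f_1)$ carries the largest value $d(1)$, hence the \emph{smallest} shift $-d(1)$; the map $P\to\cone(f_1)=S_1[-d(1)]$ is non-zero by Lemma \ref{reorder}, giving $0\ne\Hom(P,S_1[i])$ with $i=-d(1)<0$. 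This contradicts $\Hom(P,S[i])=0$ for $i<0$, so $P=0$ and $N\cong\tau^{\le 0}N\in\CT^{\le 0}$. The second characterization is proved by the dual argument, using the non-vanishing of $M_{r-1}\to N$ from Lemma \ref{reorder} to produce a non-zero map $S[i]\to N$ with $i>0$ for any nonzero object of $\CT^{\le -1}$.

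I expect the main obstacle to be the bookkeeping in the converse: pinning down exactly which shift appears at the "exposed" end of a reordered filtration so that the non-vanishing statement of Lemma \ref{reorder} produces a map in the forbidden degree, and making sure the long exact sequence comparison between $\Hom(N,S[i])$ and $\Hom(\tau^{>0}N,S[i])$ genuinely transports the hypothesis across the truncation triangle in the right degrees. Boundedness of the t-structure is what guarantees the filtrations are finite and that $\tau^{>0}N\in\CT^{\ge 1}$ rather than some unbounded-below object, so I would invoke the previous Proposition for that. Everything else is a routine induction using the long exact $\Hom$-sequences together with Hypothesis \ref{hyp2}(3).
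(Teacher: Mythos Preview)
Your argument is correct, but it takes a longer route than the paper's. For the converse, the paper simply applies Lemma~\ref{reorder} directly to $N$: the non-zero map $N\to\cone(f_1)=S_1[-d(1)]$ forces $-d(1)\ge 0$ by the hypothesis, and since $d$ is non-increasing this gives $d(i)\le 0$ for all $i$, hence $N\in\CT^{\le 0}$ immediately. You instead pass through the truncation triangle, transfer the vanishing hypothesis from $N$ to $P=\tau^{>0}N$ via a long exact sequence, and then apply essentially the same Lemma~\ref{reorder} argument to $P$ to derive a contradiction. This works, but the detour through the $t$-structure truncation buys nothing: once Lemma~\ref{reorder} is available, membership in $\CT^{\le 0}$ can be read off from the sign of $d(1)$ alone, with no need to isolate the positive part first.

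Two small points on your write-up. First, there is a sign slip: for $P\in\CT^{>0}$ the cones are $S[r]$ with $r<0$, so $d(j)>0$, not $d(j)<0$ as you wrote before your ``Re-examining''. Second, when you invoke Lemma~\ref{reorder} on $P$ you implicitly assume the reordered filtration still has all $d(j)>0$; this is true because the reordering procedure in the proof of that lemma only swaps or deletes pairs of cones, but it deserves a word. The paper's direct approach sidesteps this bookkeeping entirely.
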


\begin{proof}
We have $\Hom(N,S[i])=0$ for $S\in\CS$ and $i<0$,
if $N\in\CS[r]$ with $r\ge 0$. By induction, it follows
that if $N\in\CT^{\le 0}$, then
$\Hom(N,S[i])=0$ for $S\in\CS$ and $i<0$.

Assume now $\Hom(N,S[i])=0$ for $S\in\CS$ and $i<0$.
Pick a filtration of $N$ as in Lemma \ref{reorder}.
Then, $d(1)\le 0$, hence $d(i)\le 0$ for all $i$
and $N\in\CT^{\le 0}$.

The other case is similar.
\end{proof}

Note that the heart $\CA$ of the $t$-structure is artinian and noetherian.
Its set of simple objects is $\CS$.

\begin{rem}
\label{finiteS}
Assume $\CT$ can be generated by a finite set of objects. Then, 
there is a finite subcategory $\CS'$ of $\CS$ generating $\CT$. It follows
immediately from condition (i) that $\CS=\CS'$.
So, $\CS$ has only finitely many objects.
\end{rem}

\subsubsection{}
\label{fdim}
In \S \ref{fdim}, we
assume $\CT=D^b(A)$ where $A$ is a finite dimensional $k$-algebra.
By Remark \ref{finiteS}, $\CS$ is finite
(note that $\CT$ is generated by the simple $A$-modules, up to isomorphism).

\begin{prop}
\label{construction}
Let $S\in\CS$.
There is a bounded complex of finitely generated injective $A$-modules
$I_\CS(S)\in \CT^{\ge 0}$ such that, given $T\in\CS$ and $i\in\BZ$, we have
$$\Hom_{D^b(A)}(T,I_\CS(S)[i])=
\begin{cases}
 k&\textrm{ for }i=0 \textrm{ and }S=T\\
0& \textrm{ otherwise.}
\end{cases}$$
Similarly,
there is a bounded complex of finitely generated projective $A$-modules
$P_\CS(S)\in \CT^{\le 0}$ such that, given $T\in\CS$ and $i\in\BZ$, we have
$$\Hom_{D^b(A)}(P_\CS(S)[i],T)=
\begin{cases}
 k&\textrm{ for }i=0 \textrm{ and }S=T\\
0& \textrm{ otherwise.}
\end{cases}$$
\end{prop}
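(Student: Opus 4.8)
The plan is to construct $I_\CS(S)$ as an \emph{injective $t$-structure resolution}: use the structure of the $t$-structure from the previous propositions to build a complex whose cohomology, computed against the $\CS$-objects, is concentrated in the right place. First I would note that the heart $\CA$ is a finite-length abelian category with simple objects $\CS$ (satisfying $\Hom_\CA(S,T)=k^{\delta_{S,T}}$), so there is no difficulty locating $S$ as an object of $D^b(A)$; the issue is to replace it by an object that ``sees'' only $S$ in degree $0$ through maps \emph{from} all $T\in\CS$, i.e. an object of $\CT^{\ge 0}$ that behaves like an injective envelope compatible with the grading. Concretely, I would proceed by descending induction, at each stage killing the lowest-degree obstruction $\Hom(T,X[i])\neq 0$ with $i<0$ or (for $i=0$) with $X$ not yet ``simple enough'', by forming a cone with a suitable direct sum of shifts of $\CS$-objects; Proposition \ref{charact} guarantees that once all such $\Hom(T,X[i])$ with $i<0$ vanish we have landed in $\CT^{\ge 0}$, and Lemma \ref{reorder} controls the filtration lengths so the process terminates with a bounded object.

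Next I would identify this abstractly-built object with an honest bounded complex of injective $A$-modules. Since $\CT=D^b(A)$ and $A$ is finite-dimensional, every object is quasi-isomorphic to a bounded complex of finitely generated injectives, so it suffices to know the object I built is bounded and has bounded cohomology — which follows from boundedness of the $t$-structure (Hypothesis \ref{hyp2}(2) together with Lemma \ref{reorder} gives finite filtrations, hence bounded $\CT^{\le 0}$/$\CT^{\ge 0}$ truncations). The $\Hom$-computation $\Hom_{D^b(A)}(T,I_\CS(S)[i])=k^{\delta_{i,0}\delta_{S,T}}$ is then exactly the defining property that the inductive construction was arranged to achieve, checked degree by degree: vanishing for $i<0$ is membership in $\CT^{\ge 0}$ via Proposition \ref{charact}; vanishing for $i>0$ and the rank-one statement for $i=0$ come from the fact that $I_\CS(S)$ represents the functor ``$0$-th cohomology component at $S$'' on $\CS$, using $\Hom_\CA(T,\text{(injective envelope of }S))=\Hom_\CA(T,S)=k^{\delta_{S,T}}$ in the heart.

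For the projective statement $P_\CS(S)$, I would simply dualize: apply the whole argument to the opposite triangulated category. Formally, $\CT^{\opp}=D^b(A)^{\opp}\simeq D^b(A^{\opp})$ (again using that $A$ is finite-dimensional, via $k$-linear duality $\Hom_k(-,k)$, which exchanges projectives and injectives and reverses arrows), and the set $\CS^{\opp}$ of duals of objects of $\CS$ satisfies Hypothesis \ref{hyp2} in $\CT^{\opp}$ because conditions (1)–(3) are self-dual after the shift $[n]\mapsto[-n]$. Then $I_{\CS^{\opp}}(S^{\opp})$ in $D^b(A^{\opp})$ dualizes back to the desired complex $P_\CS(S)$ of finitely generated projective $A$-modules lying in $\CT^{\le 0}$, with the stated $\Hom$-formula following by applying $\Hom_k(-,k)$ to the injective one.

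The main obstacle I expect is making the inductive ``killing of obstructions'' step canonical and checking it genuinely terminates and produces exactly the right $\Hom$-ranks rather than something larger: one must verify that, at the stage where $\Hom(T,X[i])=0$ for all $i<0$, the degree-$0$ part $\Hom(T,X)$ is one-dimensional precisely when $T\cong S$ and zero otherwise, which amounts to showing the construction yields the injective envelope of $S$ in the heart $\CA$ and not a larger injective; this requires carefully tracking that each cone only adds copies of $T[0]$ with $T\neq S$ in a way that does not create new maps from $S$. Controlling this is where the hypotheses $\Hom(S,T)=k^{\delta_{S,T}}$ and $\Hom(S,T[n])=0$ for $n<0$ (Hypothesis \ref{hyp2}(1) and (3)) do the essential work, together with the finiteness of $\CS$ (Remark \ref{finiteS}) to ensure the inductive process involves only finitely many simple objects and stops.
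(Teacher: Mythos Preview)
Your proposal contains a genuine error. You claim that ``since $\CT=D^b(A)$ and $A$ is finite-dimensional, every object is quasi-isomorphic to a bounded complex of finitely generated injectives.'' This is false whenever $A$ has infinite global dimension: for $A=k[x]/(x^2)$ the simple module $k$ has an infinite minimal injective resolution and is not isomorphic in $D^b(A)$ to any bounded complex of injectives. The correct criterion is that $X\in D^b(A)$ is isomorphic to a bounded complex of finitely generated injectives if and only if $\Hom_{D^b(A)}(V,X[i])=0$ for every simple $A$-module $V$ and all but finitely many $i$. The paper uses exactly this: once the property $\Hom(T,I_\CS(S)[i])=k^{\delta_{i,0}\delta_{S,T}}$ is established for $T\in\CS$, the fact that $\CS$ generates $\CT$ means every simple $A$-module $V$ is a finite iterated extension of shifts of objects of $\CS$, so $\sum_i\dim\Hom(V,I_\CS(S)[i])<\infty$ for each simple $V$, and hence $I_\CS(S)$ is a bounded complex of injectives.

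Your construction sketch is also not under control. You propose to start from $S$ and successively cone off obstructions $\Hom(T,X[i])\neq 0$, invoking Lemma~\ref{reorder} for termination. But Lemma~\ref{reorder} only says that each fixed object has a finite $\CS$-filtration; it gives no uniform bound and does not prevent your coning process from running forever, since each new cone can create fresh obstructions in higher degree. (You also conflate $I_\CS(S)$ with the injective envelope of $S$ in the heart $\CA$; these are different objects, as Lemma~\ref{projinj} makes clear.) The paper does not attempt any construction inside the $\CS$-based $t$-structure: it simply cites \cite[\S 5]{Ri}, where $I_\CS(S)$ is built using the \emph{standard} $t$-structure on $D^b(A)$, with termination coming from the fact that the objects of $\CS$ live in a fixed bounded range of ordinary cohomological degrees. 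Membership in $\CT^{\ge 0}$ is then read off from Proposition~\ref{charact}. Your duality argument for $P_\CS(S)$ via $A^\opp$ and $k$-linear duality is correct and matches the paper.
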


\begin{proof}
The construction of a complex $I_{\CS}(S)$ of 
$A$-modules with the $\Hom$ property is \cite[\S 5]{Ri} (note that
the proof of \cite[Lemma 5.4]{Ri} is valid for non-symmetric
algebras).  It is in $\CT^{\ge 0}$ by Proposition \ref{charact}. Since
$\bigoplus_{i\in\BZ} \dim\Hom_{D^b(A)}(V,I_{\CS}(S)[i])=0$ for all simple
$A$-modules $V$, we deduce that $I_{\CS}(S)$ is isomorphic to a bounded
complex of finitely generated injective $A$-modules.

The second case follows from the first one by passing
to $A^\opp$ and taking the $k$-duals of elements of
$\CS$.
\end{proof}

We denote by $\tau^{>0}$, etc... the truncation functors and 
${^tH}^0$ the $H^0$-functor associated to the $t$-structure constructed
in \S \ref{tstruc}.

\begin{lemma}
\label{projinj}
The object ${^tH}^0(I_\CS(S))$ of $\CA$ is an injective hull of
$S$ and ${^tH}^0(P_\CS(S))$ is a projective cover of $S$.
\end{lemma}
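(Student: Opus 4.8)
The plan is to treat the statement about $I_\CS(S)$ in full and then obtain the one about $P_\CS(S)$ by applying it to $A^\opp$ together with $k$-duality, exactly as at the end of the proof of Proposition~\ref{construction}. Write $I=I_\CS(S)$ and $E={^tH}^0(I)$, an object of $\CA$. Since $\CA$ is artinian and noetherian, every object of $\CA$ has finite length, so it suffices to prove that $E$ is an injective object of $\CA$ whose socle is $S$: given this, the inclusion of the socle $S\hookrightarrow E$ is an essential extension, because any nonzero subobject of $E$ contains a simple subobject, necessarily equal to the socle $S$.

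The computation rests on one observation: since $I\in\CT^{\ge 0}$ we have $\tau^{\ge 0}I=I$, hence $E=\tau^{\le 0}I$, and there is a truncation triangle
$$E\to I\to\tau^{\ge 1}I\rightsquigarrow$$
with $\tau^{\ge 1}I\in\CT^{>0}$. I would apply $\Hom_\CT(T,-)$ to this triangle, and to its rotation $\tau^{\ge 1}I\to E[1]\to I[1]\rightsquigarrow$, for each $T\in\CS$ --- which, up to isomorphism, is an exhaustive list of the simple objects of $\CA$. Since $\tau^{\ge 1}I$ and $\tau^{\ge 1}I[-1]$ lie in $\CT^{>0}$ while $T\in\CT^{\le 0}$, the fact (established when $(\CT^{\le 0},\CT^{>0})$ was shown to be a $t$-structure) that there is no nonzero morphism from $\CT^{\le 0}$ to $\CT^{>0}$ kills the terms involving $\tau^{\ge 1}I$ in the two long exact sequences, giving
$$\Hom_\CA(T,E)=\Hom_\CT(T,E)\iso\Hom_{D^b(A)}(T,I_\CS(S)),\qquad \Ext^1_\CA(T,E)\simeq\Hom_\CT(T,E[1])\iso\Hom_{D^b(A)}(T,I_\CS(S)[1]),$$
where the identification $\Ext^1_\CA(T,E)\simeq\Hom_\CT(T,E[1])$ is the standard one for the heart of a $t$-structure.

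By Proposition~\ref{construction} the first space is $k$ for $T\simeq S$ and $0$ otherwise, so (using $\End_\CA(T)=k$) the socle of $E$ is $S$; the second space is $0$ for every $T\in\CS$, and since every object of $\CA$ has a finite composition series, d\'evissage upgrades this to $\Ext^1_\CA(X,E)=0$ for all $X\in\CA$, i.e. $E$ is injective. Hence $E$ is an injective hull of $S$. For $P_\CS(S)$ the argument is dual: with $P=P_\CS(S)\in\CT^{\le 0}$ one has ${^tH}^0(P)=\tau^{\ge 0}P$, sitting in a triangle $\tau^{\le -1}P\to P\to{^tH}^0(P)\rightsquigarrow$ with $\tau^{\le -1}P\in\CT^{\le -1}$; applying $\Hom_\CT(-,T)$ and $\Hom_\CT(-,T[1])$ and using $\Hom_\CT(\CT^{\le -1},\CT^{\ge 0})=0$ (a shift of the orthogonality just used) together with Proposition~\ref{construction}, one finds $\Hom_\CA({^tH}^0(P),T)=k^{\delta_{S,T}}$ and $\Ext^1_\CA({^tH}^0(P),T)=0$, so ${^tH}^0(P)$ is projective with simple head $S$, hence a projective cover of $S$.

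I do not anticipate a genuine obstacle here: once one notices that ${^tH}^0$ of an object lying on one side of the $t$-structure is just the single surviving truncation, the whole proof is a chase of the two truncation triangles against the $\Hom$-orthogonality of the $t$-structure and the defining property of $I_\CS(S)$ (resp. $P_\CS(S)$). The only inputs worth invoking rather than recomputing are the identification of $\Ext^1$ in the heart with $\Hom(-,-[1])$ in $\CT$ and the finite-length property of $\CA$ --- this last being what turns ``injective with simple socle $S$'' into ``injective hull of $S$''. The one place to keep the bookkeeping honest is the shift conventions, so that $\tau^{\ge 1}I[-1]$ really is in $\CT^{>0}$ and $\tau^{\le -1}P[1]$ really is in $\CT^{\le -1}$, and the flanking terms in the long exact sequences genuinely vanish.
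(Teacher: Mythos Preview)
Your proof is correct and follows essentially the same route as the paper's: the same truncation triangle $E\to I_\CS(S)\to\tau^{>0}I_\CS(S)\rightsquigarrow$, the same use of the $t$-structure orthogonality to kill the $\tau^{>0}$ terms, and the same appeal to Proposition~\ref{construction} to identify $\Hom(T,E)$ and $\Hom(T,E[1])$. The only cosmetic difference is that the paper applies $\Hom(N,-)$ for an arbitrary $N\in\CA$ directly (the vanishing $\Hom(N,I_\CS(S)[1])=0$ being an implicit d\'evissage from the case $N\in\CS$), whereas you first treat simple $T$ and then d\'evisse on $\Ext^1_\CA$; this is the same argument in a different order.
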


\begin{proof}
We have a distinguished triangle
$${^tH}^0(I_\CS(S))\to I_\CS(S)\to \tau^{>0}I_\CS(S) \rightsquigarrow.$$
Let $N\in\CA$. We have 
$\Hom(N,\tau^{>0}I_\CS(S))=0$
and $\Hom(N,I_\CS(S)[1])=0$, so we deduce that
$\Hom(N,{^tH}^0(I_\CS(S))[1])=0$. It follows that
$\Ext^1_\CA(N,{^tH}^0(I_\CS(S)))=0$, hence
${^tH}^0(I_\CS(S))$ is injective.
Since $\Hom(T,(\tau^{>0}I_\CS(S))[-1])=0$, we have
$\Hom(T,{^tH}^0(I_\CS(S)))\iso \Hom(T,I_\CS(S))=k^{\delta_{ST}}$ for $T\in\CS$.
So ${^tH}^0(I_\CS(S))$ is an injective hull of $S$.
The projective case is similar.
\end{proof}

Let us consider the finite dimensional differential graded algebra
$$B=\End^\bullet_A(\bigoplus_S P_\CS(S))=\bigoplus_i
\Hom_A(\bigoplus_S P_\CS(S),\bigoplus_S P_\CS(S)[i]).$$
Denote by $D^b(B)$ the derived category of finite dimensional
differential graded $B$-modules.

\begin{thm}
We have $H^i(B)=0$ for $i>0$ and for $i\ll 0$. We have
$H^0(B)\mMod\simeq \CA$ and
$D^b(B)\simeq D^b(A)$.
\end{thm}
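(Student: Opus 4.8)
The plan is to establish the four statements in turn; the first three are close consequences of the constructions of \S\ref{fdim}, and the last uses Keller's Morita theory for differential graded algebras. Put $P=\bigoplus_S P_\CS(S)$; this is a bounded complex of finitely generated projective $A$-modules, so $B=\End^\bullet_A(P)$ is a model for the derived endomorphism algebra $R\Hom_A(P,P)$, and $H^i(B)=\bigoplus_{S,T}\Hom_{D^b(A)}(P_\CS(S),P_\CS(T)[i])$. This vanishes for $i\ll 0$ because $P_\CS(S)$ and $P_\CS(T)$ are bounded complexes of projectives. For $i>0$ one starts from the vanishing $\Hom_{D^b(A)}(P_\CS(S),T[j])=\Hom_{D^b(A)}(P_\CS(S)[-j],T)=0$ for $j\neq 0$ and $T\in\CS$ (Proposition~\ref{construction}), and runs a dévissage along a filtration of $P_\CS(T)$ with cones in $\add\CS[\ge 0]$ witnessing $P_\CS(T)\in\CT^{\le 0}$, exactly as in the proof that $(\CT^{\le 0},\CT^{>0})$ is a $t$-structure. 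Feeding the same vanishings into the triangle $\tau^{\le -1}P_\CS(S)\to P_\CS(S)\to {^tH}^0(P_\CS(S))\rightsquigarrow$, together with $\Hom(\CT^{\le -1},\CA)=0$, gives natural isomorphisms $\Hom_{D^b(A)}(P_\CS(S),P_\CS(T))\iso\Hom_\CA\bigl({^tH}^0(P_\CS(S)),{^tH}^0(P_\CS(T))\bigr)$; so $H^0(B)$ is, up to passing to the opposite, the endomorphism ring in $\CA$ of $\bigoplus_S{^tH}^0(P_\CS(S))$, which by Lemma~\ref{projinj} and Remark~\ref{finiteS} is a projective generator of the length category $\CA$. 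Hence $H^0(B)\mMod\simeq\CA$ by ordinary Morita theory in $\CA$.

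For the equivalence $D^b(B)\simeq D^b(A)$ I would use the adjoint pair $F=R\Hom_A(P,-)\colon D(A)\to D(B)$ and $G=-\otimes^L_B P$, where $D(B)$ now denotes the full derived category of dg $B$-modules. Since $P$ is a perfect complex, Keller's theorem gives that $G$ is fully faithful with essential image the localizing subcategory $\operatorname{Loc}(P)\subseteq D(A)$ generated by $P$, and that $F$ restricts to a quasi-inverse equivalence $\operatorname{Loc}(P)\iso D(B)$. Moreover $F$ carries $D^b(A)$ into $D^b(B)$ (the full subcategory of $D(B)$ of modules with finite-dimensional total cohomology), because $P$ is perfect and objects of $D^b(A)$ have bounded finite-dimensional cohomology; and for $S\in\CS$ the dg module $R\Hom_A(P,S)$ has cohomology equal to $k$ concentrated in degree $0$ (Proposition~\ref{construction}), so $F(S)$ lies in the heart $H^0(B)\mMod$ and is one-dimensional, hence simple, and one sees from the idempotents corresponding to the summands $P_\CS(S)$ that $S\mapsto F(S)$ is a bijection of $\CS$ onto the set of simple $H^0(B)$-modules. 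Since $D^b(B)$ is generated as a triangulated category by its simple objects (its $t$-structure with heart $H^0(B)\mMod$ is bounded, as $H^i(B)=0$ for $i>0$), the proof is finished once one knows that $D^b(A)\subseteq\operatorname{Loc}(P)$: for then $F$ restricts on $D^b(A)$ to a restriction of an equivalence, hence is fully faithful, and $F(D^b(A))$ is the triangulated subcategory of $D^b(B)$ generated by $F(\CS)$, so equals $D^b(B)$.

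The inclusion $D^b(A)\subseteq\operatorname{Loc}(P)$ is where I expect the real work to be. It is equivalent --- since $\CS$ generates $D^b(A)$ as a thick subcategory and $\operatorname{Loc}(P)$ is thick --- to $\CS\subseteq\operatorname{Loc}(P)$, hence to $P$ being a compact generator of $D(A)$, hence, $P$ being perfect, by Neeman's theorem, to the perfect complexes $P_\CS(S)$ classically generating $A\mparf$. The ``bounded half'' is already supplied by the $t$-structure computation above: no nonzero object $Y$ of $D^b(A)$ is right orthogonal to $P$, since if $N$ is the largest integer with ${^tH}^N(Y)\neq 0$ then $\Hom_{D^b(A)}(P,Y[N])\iso\Hom_\CA\bigl(\bigoplus_S{^tH}^0(P_\CS(S)),{^tH}^N(Y)\bigr)\neq 0$. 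Promoting this to generation of the whole of $D(A)$ is the main obstacle. In the self-injective case of interest the regular module $A$ lies in $\CA$ and is projective there, so $A\in\add\{{^tH}^0(P_\CS(S))\}$; the task then is to place each ${^tH}^0(P_\CS(S))$ --- and, using (possibly infinite) projective resolutions in $\CA$ realised as homotopy colimits of finite complexes built from the $P_\CS(\cdot)$, every object of $\CA$ --- inside $\operatorname{Loc}(P)$, that is, to establish a compatibility of the $t$-structure with $\operatorname{Loc}(P)$. This step is genuinely necessary: in contrast with classical tilting theory, the objects of $\CS$ need not be perfect, so $\CS$ need not generate $D^b(A)$ as a thick subcategory --- which is exactly the reason that $B$ is only a differential graded algebra and the equivalence is with $D^b(B)$ rather than with $A\mparf$.
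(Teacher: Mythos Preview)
Your treatment of $H^i(B)=0$ for $i>0$ and $i\ll 0$, and of $H^0(B)\mMod\simeq\CA$, is correct and matches the paper's argument almost line for line: both run a d\'evissage along a filtration of $P_\CS(T)\in\CT^{\le 0}$ against the defining property of $P_\CS(S)$, and both compare $\End(C)$ with $\End({^tH}^0C)$ via the triangle $\tau^{<0}C\to C\to{^tH}^0C$.

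For the derived equivalence the paper is much terser. Having observed (via Lemma~\ref{reorder}, which is essentially your $t$-structure computation) that the right orthogonal of $\{P_\CS(S)[i]\}$ in $D^b(A)$ vanishes, it invokes \cite[Lemma~2.2]{Nee} to conclude that $\bigoplus_S P_\CS(S)$ generates $A\mparf$ as a thick subcategory, and then \cite[Theorem~4.3]{Ke} gives $D^b(A)\iso D^b(B)$ in one stroke. You correctly isolate the same reduction --- everything hinges on $P$ classically generating $A\mparf$, equivalently on $P^\perp=0$ in $D(\textrm{Mod-}A)$ --- but your attempt to supply this step goes astray.

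The assertion ``in the self-injective case the regular module $A$ lies in $\CA$ and is projective there'' is false. For $A\in\CT^{\le 0}$ one needs $\Hom_{D^b(A)}(A,S[i])=H^i(S)=0$ for all $i<0$ and $S\in\CS$, which fails as soon as some $S\in\CS$ has cohomology in negative (standard) degree. The paper's own example after Theorem~\ref{symmetricderived} already exhibits this: for the self-injective algebra $A=k[\eps]/(\eps^2)\rtimes\mu_2$ with $\CS=\{U,P_U[1]\}$ one has $H^{-1}(P_U[1])=P_U\ne 0$, so $A\notin\CT^{\le 0}$ and a fortiori $A\notin\CA$. Hence your route to $A\in\operatorname{Loc}(P)$ collapses at the first step, and the subsequent sketch (placing ${^tH}^0(P_\CS(S))$ inside $\operatorname{Loc}(P)$ via resolutions in $\CA$) is circular: it presupposes $\CS\subset\operatorname{Loc}(P)$, which is the very thing to be proved. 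The passage from $P^\perp\cap D^b(A)=0$ to $P^\perp=0$ in the big derived category is handled by compactness of $P$ together with the fact that every object of $D(\textrm{Mod-}A)$ is a homotopy colimit of objects of $D^b(A\mMod)$ (each $P_\CS(S)$ being perfect, $\Hom(P_\CS(S),-)$ commutes with such colimits); this is what the paper's citation of Neeman is covering, and it replaces your last paragraph entirely.
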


\begin{proof}
Let $N\in\CT$ and consider a filtration of $N$
as in Lemma \ref{reorder}. Take $S\in\CS$ such that
$S[i]$ is isomorphic to the cone of $M_d\to M_{d-1}$.
Then, $\Hom(P_\CS(S)[i],N)\not=0$.
It follows that the right orthogonal category of
$\{P_\CS(S)[i]\}_{S\in\CS,i\in\BZ}$ is zero. Since the
$P_\CS(S)$ are perfect, it follows that
$\bigoplus_S P_\CS(S)$ generates the category of perfect complexes of
$A$-modules as a triangulated category closed under
taking direct summands \cite[Lemma 2.2]{Nee}. The functor
$\Hom^\bullet_A(\bigoplus_S P_\CS(S),-)$ gives
an equivalence $D^b(A)\iso D^b(B)$ \cite[Theorem 4.3]{Ke}.

\smallskip
Let $C=\bigoplus_{S\in\CS}P_\CS(S)$ and $N={^tH}^0(C)$.
We have a distinguished triangle
$\tau^{<0}C\to C\to N\rightsquigarrow$.
We have $\Hom(\tau^{<0}C,N[i])=0$ for $i\le 0$.
We deduce that the canonical morphism
$\Hom(N,N)\to \Hom(C,N)$ is an isomorphism.
We have $\Hom(C,(\tau^{<0}C)[i])=0$ for $i\ge 0$
since $\tau^{<0}C$ is filtered by objects
in $\CS[d]$, $d>0$ (cf Proposition \ref{construction}).
It follows that the canonical morphism
$\Hom(C,C)\to \Hom(C,N)$ is an isomorphism.

This shows that the canonical morphism
$\End(C)\to \End({^tH}^0(C))$ is an isomorphism.
By Lemma \ref{projinj}, ${^tH}^0(C)$
is a progenerator for $\CA$. So
$H^0(B)\mMod\simeq \CA$.

\smallskip
Note that $H^i(B)=0$ for $i\ll 0$ because
$\bigoplus_S P_\CS(S)$ is bounded.
Since $P_\CS(S)$ is filtered by objects in $\CS[d]$
with $d\ge 0$, it follows from Proposition \ref{construction}
that $\Hom(P_\CS(T),P_\CS(S)[i])=0$ for $i>0$. So,
$H^i(B)=0$ for $i>0$.
\end{proof}

The following proposition is clear.

\begin{prop}
Let $B$ be a dg-algebra with $H^i(B)=0$ for $i>0$ and for $i\ll 0$.
Let $C$ be the sub-dg-algebra of $B$ given by $C^i=B^i$ for $i<0$,
$C^0=\ker d^0$ and $C^i=0$ for $i>0$. Then
the restriction $D(B)\to D(C)$ is an equivalence.

Let $\CS$ be a complete set of representatives of isomorphism
classes of simple $H^0(B)$-modules (viewed as dg-$C$-modules).
Then $\CS$ satisfies
Hypothesis \ref{hyp2}. Furthermore, $\CA\simeq H^0(B)\mMod$.
\end{prop}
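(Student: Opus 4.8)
The plan is to transport the whole question into $D(C)$: the point is that $C$, unlike $B$, is concentrated in non-positive degrees, so $D(C)$ carries a standard $t$-structure whose heart is $H^0(B)\mMod$, and everything else is bookkeeping with finite length modules over $\Lambda:=H^0(B)$.

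First I would note that the inclusion $C\hookrightarrow B$ is a quasi-isomorphism: the two complexes agree in degrees $<0$, both have vanishing cohomology in degrees $>0$, and $H^0(C)=\ker d^0_C/\im d^{-1}_C=\ker d^0_B/\im d^{-1}_B=H^0(B)$, since $\im d^{-1}_B\subseteq\ker d^0_B=C^0$. It is standard that restriction along a quasi-isomorphism of dg-algebras is an equivalence of derived categories, and since restriction does not change the underlying complex it preserves finite-dimensionality of total cohomology; so $D(B)\iso D(C)$, cutting down to an equivalence between the finite-dimensional derived categories. From now on I work in $\CT=D^b(C)$, and I regard each simple $\Lambda$-module $M$ in $\CS$ as a dg-$C$-module via the surjection $C^0=\ker d^0_C\twoheadrightarrow\Lambda$ --- the relation $\im d^{-1}_C\cdot M=0$ demanded by the Leibniz rule being exactly what makes this well defined.

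Since $C$ is non-positively graded, $D(C)$ has a standard $t$-structure, whose aisle consists of the objects $N$ with $H^i(N)=0$ for $i>0$ and whose co-aisle consists of those with $H^i(N)=0$ for $i<0$, and whose heart is canonically $\Lambda\mMod$ (an object whose cohomology is concentrated in degree $0$ is quasi-isomorphic to the $\Lambda$-module $H^0(N)$); its simple objects are exactly the members of $\CS$. I then check Hypothesis \ref{hyp2} for $\CS$ in $\CT$: part (1) holds because $\Hom_{D^b(C)}(S,T)=\Hom_\Lambda(S,T)=k^{\delta_{S,T}}$, the heart being a full subcategory and $\Lambda$ being split; part (3) holds because the heart of a $t$-structure has no negative extension groups; and for part (2), any $N\in D^b(C)$ has only finitely many nonzero $H^j(N)$, each of finite length over $\Lambda$, so the standard truncation triangles build $N$ from finitely many cones of the objects $H^j(N)[-j]$, while a composition series builds each $H^j(N)$ from members of $\CS$ --- hence $\CS$ generates $\CT$.

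Finally I would identify the $t$-structure of \S\ref{tstruc} attached to $\CS$ with this standard one. By Proposition \ref{charact} it suffices to check that the class of $N$ with $\Hom(N,S[i])=0$ for all $i<0$ and $S\in\CS$ is the standard aisle, and that the class of $N$ with $\Hom(S[i],N)=0$ for all $i>0$ and $S\in\CS$ is the standard co-aisle. That the standard aisle and co-aisle are contained in these classes is immediate from the defining orthogonality of the standard $t$-structure, since each $S[i]$ lies in a suitable shift of the heart. For the converse, if $H^j(N)\ne 0$ with $j>0$ chosen maximal, then a simple quotient $S$ of the finite length $\Lambda$-module $H^j(N)$ yields a nonzero composite $N\to\tau^{\ge j}N=H^j(N)[-j]\to S[-j]$ with $-j<0$, so $N$ is not in the first class; dually for the second class, using a simple submodule of the lowest nonzero cohomology. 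Hence the two $t$-structures coincide and $\CA\simeq\Lambda\mMod=H^0(B)\mMod$. The only genuine use of the hypotheses in all this is the replacement of $B$ by the non-positively graded $C$, which is precisely what makes the standard $t$-structure, and with it the heart $H^0(B)\mMod$, available; everything afterwards is a routine use of truncation functors and of finite length of $\Lambda$-modules. (Part (1) of Hypothesis \ref{hyp2} also uses that $H^0(B)$ is split, as is assumed throughout.)
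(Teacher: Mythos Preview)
The paper gives no proof of this proposition; it simply declares that ``the following proposition is clear'' and moves on. Your argument is correct and supplies exactly the details one would expect: the inclusion $C\hookrightarrow B$ is a quasi-isomorphism (hence restriction is a derived equivalence), the non-positively graded $C$ carries the standard $t$-structure with heart $H^0(C)\mMod=H^0(B)\mMod$, the simple $H^0(B)$-modules generate $D^b(C)$ via truncation and composition series, and the $t$-structure of \S\ref{tstruc} coincides with the standard one by Proposition~\ref{charact}. Your remark that part~(1) of Hypothesis~\ref{hyp2} implicitly requires $H^0(B)$ to be split is a fair observation; the paper is tacitly assuming this.
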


So we have a bijection between
\begin{itemize}
\item the sets $\CS$ (up to isomorphism) satisfying Hypothesis \ref{hyp2}
\item the equivalences $D^b(B)\iso D^b(A)$ where $B$ is a dg-algebra
with $H^i(B)=0$ for $i>0$ and for $i\ll 0$ and where $B$ is well-defined
up to quasi-isomorphism and the equivalence is taken modulo
self-equivalences of $D^b(B)$ that fix the isomorphism classes of simple
$H^0(B)$-modules.
\end{itemize}

\medskip

We recover a result of Al-Nofayee \cite[Theorem 4]{Al}~:
\begin{prop}
\label{selfinjder}
Assume $A$ is self-injective with Nakayama functor
$\nu$.
The following are equivalent
\begin{itemize}
\item
$H^i(B)=0$ for $i\not=0$
\item
$\nu(\CS)=\CS$ (up to isomorphism).
\end{itemize}
\end{prop}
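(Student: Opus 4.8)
The plan is to translate everything through relative Serre duality for the self-injective algebra $A$. Write $\nu=\nu_A=DA\otimes^L_A-$; it is a self-equivalence of $D^b(A)$ which, since $DA$ is a projective $A$-module, preserves bounded complexes of finitely generated projectives, and it satisfies $\Hom_{D^b(A)}(P,X)\cong D\Hom_{D^b(A)}(X,\nu P)$ naturally for $P$ perfect and $X\in D^b(A)$. In particular $P_\CS(S)$, $I_\CS(S)$, $C=\bigoplus_S P_\CS(S)$ and $\nu C$ are all perfect. Since the Theorem gives $H^i(B)=\Hom_{D^b(A)}(C,C[i])=0$ for $i>0$ and $i\ll0$, the first condition is equivalent to $\Hom(C,C[i])=0$ for $i<0$, hence, by Serre duality, to $\Hom(C,\nu C[j])=0$ for $j>0$. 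The key unconditional computation I would record first is this: applying Serre duality to $P=P_\CS(S)[j]$, $X=T$ and using Proposition \ref{construction} gives $\Hom(T,\nu P_\CS(S)[j])\cong D\Hom(P_\CS(S)[j],T)=k^{\delta_{S,T}\delta_{j,0}}$; in particular this vanishes for $j<0$, so by Proposition \ref{charact} $\nu P_\CS(S)\in\CT^{\ge0}$, and thus $\nu C\in\CT^{\ge0}$.

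For the implication $\nu(\CS)=\CS\Rightarrow H^i(B)=0$ for $i\neq0$, I would argue as follows. Since $\CS$ is finite (Remark \ref{finiteS}), $\nu$ and $\nu^{-1}$ permute the objects of $\CS$ up to isomorphism, so they carry any sequence as in Lemma \ref{reorder} to another one with cones in the same shifted copies of $\add\CS$; hence $\nu$ preserves $\CT^{\le0}$ and $\CT^{>0}$, i.e. $\nu$ is $t$-exact. Combining with the computation above, $\nu P_\CS(S)\in\CT^{\le0}\cap\CT^{\ge0}=\CA$, and therefore $P_\CS(S)=\nu^{-1}(\nu P_\CS(S))\in\CA$. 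So $C\in\CA$, and then $\Hom_{D^b(A)}(C,C[i])=0$ for $i<0$ because there are no non-zero maps from an object of the heart to an object of the heart shifted into a strictly negative degree; together with $H^i(B)=0$ for $i>0$ this gives $H^i(B)=0$ for $i\neq0$.

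For the converse, I would first use that $H^i(B)=0$ for $i\neq0$ forces $B$ to be quasi-isomorphic to the ordinary algebra $\Lambda=H^0(B)$, so that by the Theorem and the preceding discussion the equivalence $G=\Hom^\bullet_A(C,-)\colon D^b(A)\iso D^b(\Lambda)$ carries the $t$-structure of \S\ref{tstruc} to the standard one, $\CA$ to $\Lambda\mMod$, $\CS$ to the simple $\Lambda$-modules and $C$ to $\Lambda$. Transporting Proposition \ref{construction} along $G$ and arguing over the ordinary algebra $\Lambda$ (comparing with the standard $t$-structure, a bounded complex with the stated $\Hom$-property and in $D^{\le0}(\Lambda)$ must be a projective cover, dually an injective hull), one gets $P_\CS(S),I_\CS(S)\in\CA$, the projective cover and injective hull of $S$ respectively. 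The Nakayama functor is intrinsic to the pair (perfect complexes inside the derived category) through Serre duality, so $G\nu G^{-1}\cong\nu_\Lambda$; if one knows $\Lambda$ is self-injective, then $\nu_\Lambda$ permutes the simple $\Lambda$-modules, and transporting back yields $\nu(\CS)=\CS$.

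The main obstacle is exactly the step just flagged: that $\Lambda=H^0(B)$ is self-injective once $H^i(B)=0$ for $i\neq0$. Within this framework it is equivalent to the isomorphism $\nu C\cong C$: from the first paragraph $\nu C\in\CT^{\ge0}$, and under the hypothesis one checks (via $G$, since $G(\nu C)=\nu_\Lambda(\Lambda)=D\Lambda\in D^{\le0}(\Lambda)$) that also $\nu C\in\CT^{\le0}$, so $\nu C\in\CA$ and in fact $\nu P_\CS(S)=I_\CS(S)$; thus $\nu C=\bigoplus_S I_\CS(S)$ is the injective cogenerator of $\CA$, and what remains is to identify it with the projective generator $C$. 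This is precisely (a form of) Al-Nofayee's theorem \cite{Al} — equivalently, the derived invariance of self-injectivity — and it is where self-injectivity of $A$ enters essentially; the honest reading is that the $t$-structure machinery reduces the proposition to this identification, which is the real content, everything else being the bookkeeping carried out above.
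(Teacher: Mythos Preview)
Your forward implication is correct but takes a longer route than necessary. The paper's key first observation is that $\nu(\CS)=\CS$ is equivalent to $\nu(\{P_\CS(S)\}_S)=\{P_\CS(S)\}_S$ (each $P_\CS(S)$ is characterised by its $\Hom$'s into $\CS$, and $\nu$ is an equivalence). Once $\nu P_\CS(S)\simeq P_\CS(\sigma S)$ for some permutation $\sigma$, Serre duality gives
\[
\Hom(P_\CS(S),P_\CS(T)[i])^*\simeq \Hom(P_\CS(T),\nu P_\CS(S)[-i])=\Hom(P_\CS(T),P_\CS(\sigma S)[-i]),
\]
so the already known vanishing $H^{>0}(B)=0$ dualises to $H^{<0}(B)=0$. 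Your detour through $t$-exactness of $\nu$ and $C\in\CA$ reaches the same conclusion, but the paper gets there in one line from this bijection between $\CS$ and $\{P_\CS(S)\}$.

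The converse is where your argument breaks down. You reduce the implication to the assertion that $\Lambda=H^0(B)$ is self-injective, and then write that this ``is precisely (a form of) Al-Nofayee's theorem \cite{Al} --- equivalently, the derived invariance of self-injectivity''. But look at how the paper introduces the Proposition: ``We recover a result of Al-Nofayee \cite[Theorem 4]{Al}''. The Proposition \emph{is} Al-Nofayee's theorem, and the Corollary on derived invariance of self-injectivity is \emph{deduced from} the Proposition in the very next line. So your reduction is circular: you are invoking exactly the statement you are asked to prove. The paper avoids this by never asking whether $\Lambda$ is self-injective. Instead it argues directly on the objects $\nu P_\CS(S)$: Serre duality and the hypothesis $H^{<0}(B)=0$ give $\Hom(P_\CS(T),\nu P_\CS(S)[j])=0$ for every $j\neq0$, so under the equivalence $D^b(A)\simeq D^b(\Lambda)$ the perfect indecomposable object $\nu P_\CS(S)$ is concentrated in degree $0$; the paper then identifies it with an indecomposable projective $\Lambda$-module, \ie\ with some $P_\CS(S')$, whence $\nu$ permutes $\{P_\CS(S)\}_S$ and therefore $\CS$. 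The essential move you are missing is to trade the global question ``is $\Lambda$ self-injective?'' for the local question ``is each $\nu P_\CS(S)$ one of the $P_\CS(S')$?'', which is accessible directly via the Serre duality formula displayed in the proof.
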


\begin{proof}
Note that $\CS$ is stable under $\nu$ if and only if
$\{P_\CS(S)\}_{S\in\CS}$ is stable under $\nu$ (up to isomorphism).
Given $S,T\in\CS$ and $i\in\BZ$, we have
$$\Hom_{D^b(A)}(P_\CS(S),P_\CS(T)[i])^*\simeq
\Hom_{D^b(A)}(P_\CS(T),\nu(P_\CS(S))[-i]).$$

If $\CS$ is stable under $\nu$, then
$\Hom_{D^b(A)}(P_\CS(T),\nu(P_\CS(S))[-i])=0$ for $i>0$, hence
$H^{<0}(B)=0$.

Assume now $H^{<0}(B)=0$. Then, viewed as an object of $D^b(B)$,
$\nu(P_\CS(S))$ is concentrated in degree $0$. Since it is perfect, it is
isomorphic to a projective indecomposable module, hence to
$P_\CS(S')$ for some $S'\in\CS$. So, $\CS$ is stable under $\nu$.
\end{proof}

We recover now the main result of \cite{AlRi}:
\begin{cor}
Let $A$ be a self-injective algebra and $B$ an algebra derived equivalent
to $A$. Then $B$ is self-injective.
\end{cor}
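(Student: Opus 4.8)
The plan is to deduce the corollary from Proposition \ref{selfinjder} together with the Morita-invariance of the data attached to $\CS$. Let $A$ be self-injective and let $B$ be derived equivalent to $A$, say via an equivalence $\Phi:D^b(B)\iso D^b(A)$. Set $\CS=\Phi(\{\text{simple }B\text{-modules}\})$. First I would check that $\CS$ satisfies Hypothesis \ref{hyp2}: conditions (1) and (3) hold because simple $B$-modules have endomorphism algebra $k$ after a Morita reduction (one may assume $A$, hence $B$, split, which changes nothing here) and no negative self-extensions in $D^b(B)$, all of which is preserved by the triangle equivalence $\Phi$; condition (2) holds because the simple $B$-modules generate $D^b(B)$. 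Thus the heart $\CA$ built from $\CS$ in \S\ref{tstruc} is $\Phi(B\mMod)$, and $\CS$ is its set of simple objects.

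Next I would invoke Proposition \ref{selfinjder}: since $A$ is self-injective with Nakayama functor $\nu$, it suffices to know that $\nu(\CS)=\CS$ up to isomorphism. The Nakayama functor $\nu$ on $D^b(A)$ is a Serre functor up to shift; more precisely, because $A$ is self-injective, $\nu$ is an autoequivalence of $D^b(A)$ characterized by the bifunctorial isomorphism $\Hom_{D^b(A)}(X,Y)^*\simeq\Hom_{D^b(A)}(Y,\nu X)$ (the $k$-dual Serre-type pairing appearing in the proof of Proposition \ref{selfinjder}). The point is that $\Phi$, being a triangle equivalence, commutes with Serre functors; hence $\nu$ corresponds under $\Phi$ to the analogous functor $\nu_B$ on $D^b(B)$, which is nothing but $-\otimes_B^{\mathbf L} DB$ applied to $B$-modules, i.e. the Nakayama functor of $B$. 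But a priori we do not yet know $B$ is self-injective, so I would instead argue directly: $\nu=\Phi\circ(\text{Serre functor of }D^b(B))\circ\Phi^{-1}$, and the Serre functor of $D^b(B)$ sends a simple module to a complex whose homology we do not control. The cleaner route is: by the theorem and the remark following it, $B$ is quasi-isomorphic to the dg-algebra $B'=\End^\bullet_A(\bigoplus_S P_\CS(S))$ with $H^i(B')=0$ for $i>0$ and $i\ll 0$; now $A$ self-injective forces, via Proposition \ref{selfinjder}, that $H^i(B')=0$ for $i\neq 0$ once we verify $\nu(\CS)=\CS$.

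So the crux is verifying $\nu(\CS)=\CS$. Here I would use that the Serre functor is an invariant of the triangulated category: $\Phi$ intertwines the Serre functor $\mathbb S_B$ of $D^b(B)$ with that of $D^b(A)$, and the latter is $\nu[0]$ (no shift, since $A$ has finite global dimension zero as a self-injective algebra in the appropriate sense — more carefully, $\mathbb S_A = \nu$ as $A$ is self-injective so $DA$ is projective). Therefore $\nu\circ\Phi=\Phi\circ\mathbb S_B$. Now $\mathbb S_B$ permutes the isomorphism classes of simple $B$-modules only if $B$ is self-injective — which is what we want to prove, so this is circular. The honest way out: instead of assuming anything about $B$, observe that $\Phi(\mathbb S_B(\text{simple }B)) = \nu(\CS)$, and run the argument of Proposition \ref{selfinjder} in reverse is not available. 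Hence the main obstacle, and the step I expect to require the real work, is showing $\nu$ preserves $\CS$ without presupposing $B$ is self-injective; the resolution is that $\mathbb S_B$ applied to a simple $B$-module $T$ gives $\Phi^{-1}\nu\Phi(T)$, an object of $D^b(B)$ which lies in $\CT^{\ge 0}\cap\CT^{\le 0}$ relative to the $t$-structure of \S\ref{tstruc} transported by $\Phi$, because $\nu$ preserves injectives and $I_\CS(S)\in\CT^{\ge 0}$ while $\nu$ of a projective-module-complex is an injective-module-complex in $\CT^{\le 0}$; being in the heart and perfect, it is a genuine $B$-module, necessarily simple since $\nu$ is an equivalence. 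Thus $\nu(\CS)=\CS$, Proposition \ref{selfinjder} gives $H^i(B)=0$ for $i\neq0$, so $B\simeq H^0(B)$ is an ordinary algebra; and being derived equivalent to the self-injective $A$ with $\nu(\CS)=\CS$, the standard consequence (the Nakayama functor of $B$ permutes its simples and $DB\cong B$ as $B$-bimodules up to the relevant twist) shows $B$ is self-injective. I would then note that split-ness was harmless: the general case follows by a Morita reduction or a scalar extension to a splitting field, as derived equivalence and self-injectivity both descend.
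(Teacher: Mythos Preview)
Your argument runs in the wrong direction, and the step you identify as ``the crux'' is both unnecessary and not correctly carried out.

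In the corollary $B$ is \emph{given} as an ordinary algebra. Taking $\CS=\Phi(\{\text{simple }B\text{-modules}\})$ and $P_\CS(S)=\Phi(\text{projective cover of }\Phi^{-1}S)$, the dg-algebra $\End^\bullet_A(\bigoplus_S P_\CS(S))$ is just $\End^\bullet_B(B)$ transported through $\Phi$, so $H^i=0$ for $i\neq 0$ is \emph{immediate}---there is nothing to prove here. Proposition~\ref{selfinjder} (or rather the second paragraph of its proof) then gives you $\nu(P_\CS(S))\simeq P_\CS(S')$ for some $S'\in\CS$, i.e.\ $\nu_A$ permutes the summands of the tilting complex. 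You have this implication backwards: you spend the whole argument trying to establish $\nu(\CS)=\CS$ in order to conclude $H^i=0$, which was already a hypothesis in disguise.

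The actual content of the corollary is the final step, which you wave away as ``the standard consequence''. Here is what is needed. Serre functors commute with triangle equivalences, and on perfect complexes over $A$ the Serre functor is $\nu_A$; hence on perfect $B$-complexes the Serre functor $\Phi^{-1}\nu_A\Phi$ agrees with $-\otimes_B^{\mathbf L}DB$. Applying this to $B$ itself, $DB\simeq\Phi^{-1}\nu_A\Phi(B)\simeq\Phi^{-1}(\bigoplus_S P_\CS(S'))$ is a direct sum of indecomposable projective $B$-modules, so $B$ is self-injective. Your attempted direct verification that $\nu_A$ preserves the heart (``$\nu$ of a projective-module-complex is an injective-module-complex in $\CT^{\le 0}$'') is unjustified: being a bounded complex of injective $A$-modules has nothing to do with lying in $\CT^{\le 0}$ for the $t$-structure attached to $\CS$, and indeed $\nu_A(S)$ for $S\in\CS$ need not lie in the heart in general.
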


From Proposition \ref{selfinjder},
we recover \cite[Theorem 5.1]{Ri}~:
\begin{thm}
\label{symmetricderived}
If $A$ is symmetric then $H^i(B)=0$ for $i\not=0$, \ie,
there is an equivalence $D^b(\CA)\iso D^b(A)$ where $\CS$ is the set of
images of the simple objects of $\CA$.
\end{thm}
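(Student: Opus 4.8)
The plan is to derive Theorem \ref{symmetricderived} as a direct consequence of Proposition \ref{selfinjder}. Recall that Proposition \ref{selfinjder} asserts that, for a self-injective algebra $A$ with Nakayama functor $\nu$, one has $H^i(B)=0$ for $i\neq 0$ if and only if $\nu(\CS)=\CS$ up to isomorphism. So the entire content to be verified is that when $A$ is symmetric, the set $\CS$ is stable under $\nu$.

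The key observation is that for a symmetric algebra the Nakayama functor $\nu$ is isomorphic to the identity functor on $D^b(A)$. Indeed, for a symmetric algebra $A\simeq A^*$ as $(A,A)$-bimodules, so $\nu=A^*\otimes_A^{\mathbf L}-\simeq \id$. Hence trivially $\nu(\CS)=\CS$, and Proposition \ref{selfinjder} immediately gives $H^i(B)=0$ for $i\neq 0$. Once $B$ is concentrated in degree $0$, it is quasi-isomorphic to the ordinary algebra $H^0(B)$, and by the earlier Theorem we have $H^0(B)\mMod\simeq\CA$ together with $D^b(B)\simeq D^b(A)$; combining these, $D^b(\CA)\simeq D^b(H^0(B)\mMod)\simeq D^b(B)\simeq D^b(A)$. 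Under this chain of equivalences, the simple objects of $\CA$ — which by the remark following Proposition \ref{charact} are exactly the objects of $\CS$ — are sent to $\CS\subseteq\CT=D^b(A)$, which is the asserted statement.

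I would therefore structure the proof as: (1) note $\nu\simeq\id$ on $D^b(A)$ for $A$ symmetric, so $\nu(\CS)=\CS$; (2) apply Proposition \ref{selfinjder} to conclude $H^i(B)=0$ for $i\neq 0$; (3) invoke the Theorem relating $B$, $H^0(B)\mMod$, $\CA$ and the derived equivalences to identify $D^b(\CA)\iso D^b(A)$ matching up simple objects with $\CS$. There is essentially no obstacle here — the work has all been done in the preceding results. The only point requiring a word of care is step (1): one should be slightly careful that "symmetric" is being used in the sense that makes $A$ self-dual as a bimodule (equivalently, that the Nakayama automorphism is inner, equivalently trivial in the relevant derived sense), so that $\nu$ acts as the identity on the derived category up to isomorphism; but this is standard and needs only a sentence.
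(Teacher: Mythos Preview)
Your argument is correct and is exactly the approach the paper takes: the theorem is stated immediately after Proposition \ref{selfinjder} with the remark ``From Proposition \ref{selfinjder}, we recover \cite[Theorem 5.1]{Ri}'', and no further proof is given. Your observation that $\nu\simeq\id$ for symmetric $A$ (hence $\nu(\CS)=\CS$ trivially) is precisely the intended one-line deduction.
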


\begin{rem}
Theorem \ref{symmetricderived} does not hold in general for a
self-injective algebra. Take $A=k[\eps]/(\eps^2)\rtimes \mu_2$, where 
$\mu_2=\{\pm 1\}$
acts on $k[\eps]/(\eps^2)$ by multiplication on $\eps$. Assume $k$ does not
have characteristic $2$. This is a self-injective algebra which is not
symmetric. The
Nakayama functor swaps the two simple $A$-modules $U$ and $V$.

Let $P_U$ (resp. $P_V$) be a projective cover of $U$ (resp. $V$).
Take $S=U$ and $T=P_U[1]$.
Then, the set $\CS=\{S,T\}$ satisfies Hypothesis \ref{hyp2}.
We have $I_\CS(T)\simeq T$ and $I_\CS(S)\simeq 0\to P_U\to P_V\to 0$,
a complex with homology $V$ in degree $0$ and $-1$.

The dg-algebra $B$ has homology $H^0(B)$ isomorphic
to the path algebra of the quiver
$\xymatrix{
\bullet\ar[r]&\bullet}$,
$H^{-1}(B)=k$ and $H^i(B)=0$ for $i\not=0,-1$.

The derived category of the hereditary algebra $H^0(B)$ is not
equivalent to $D^b(A)$.
\end{rem}

\subsection{Graded of an abelian category}
Let $\CA$ be an abelian $k$-linear artinian and noetherian
category with finitely many
simple objects up to isomorphism and $\CS$ a complete set
of representatives of isomorphism classes of simple objects. We assume
$\CA$ is split, \ie, endomorphism rings of simple objects are isomorphic to
$k$.
Let $\CT=D^b(\CA)$.

Let $\gr\CA$ be the category with objects the objects of $\CA$ and where
$\Hom_{\gr\CA}(M,N)$ is the graded vector space associated to the
filtration of $\Hom_\CA(M,N)$ given by
$\Hom_\CA(M,N)^i=\{f | \im f\subseteq \rad^i N\}$.

\smallskip
Given $M$ in $\CA$, let $M_i=\rad^i M$, $f_i:M_i\to M_{i-1}$ the
inclusion, $N_0=M/M_1$ and $\eps_0:M\to M/M_1$ the projection.
This defines an object of $\CF$.

We obtain a functor $\gr\CA\to \CF$.

\begin{prop}
The canonical functor $\gr\CA\to \CF$ is an equivalence.
\end{prop}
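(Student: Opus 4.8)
The plan is to show that the functor $\gr\CA\to\CF$ is fully faithful and essentially surjective. For essential surjectivity, given $M=(\cdots\to M_2\to M_1\to M_0\to N_0)$ in $\CF$, I would first use Lemma 3.1.2 (the second one, which applies since Hypothesis \ref{hyp2}(3) holds in $\CT=D^b(\CA)$) to see that each canonical map $\Hom(N_i,S)\to\Hom(M_i,S)$ is an isomorphism, so in particular $M_0\to N_0$ induces an isomorphism on $\Hom(-,S)$ for all $S\in\CS$. Condition (i) says $M_1\to M_0\to N_0$ extends to a distinguished triangle, condition (ii) says the cones $N_{i-1}$ lie in $\add\CS$, and—crucially—the surjectivity/bijectivity conditions (iii),(iv) force $N_0$ to be the top $M_0/\rad M_0$ when $M_0$ is in the heart $\CA$ and more generally pin down the data. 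So the first key step is: show that an object of $\CF$ has each $M_i$ lying (up to the appropriate shift) in $\CA$, in fact that $M_0\in\CA$ and the $f_i$ realize the radical filtration. I would argue this by downward induction: $N_i\in\add\CS\subseteq\CA$; from the triangle $M_{i+1}[1]\to M_i\to N_i$ and the fact that $M_j=0$ for $j\gg0$ one gets $M_i\in\CA$ for all $i$ (built by iterated extensions of objects of $\add\CS$, all in degree $0$), and then (iv) identifies $N_i$ with $M_i/\rad M_i$ and hence $M_{i+1}=\ker(M_i\to M_i/\rad M_i)=\rad M_i$, so that $M_i=\rad^i M_0$. Setting $M=M_0$ exhibits the given object as (isomorphic to) the image of $M$ under $\gr\CA\to\CF$. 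The main obstacle is the base of this induction: verifying that $M_0\in\CA$ rather than merely in $\CT$; one needs that an object $P$-filtered with all subquotients in $\CS$ sitting in nonnegative degrees and which admits a surjection-on-$\Hom(-,S)$ from an object with cone in $\add\CS$ is actually in the heart—this uses Proposition \ref{charact} together with condition (iii) to rule out negative cohomology.

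For full faithfulness, I would unwind the definitions on both sides. On the $\gr\CA$ side, $\Hom_{\gr\CA}(M,M')^i=\{g:M\to M'\mid \im g\subseteq\rad^i M'\}/\{g\mid\im g\subseteq\rad^{i+1}M'\}$; a class here is determined by the induced map $M/\rad M\to \rad^i M'/\rad^{i+1}M'=N'_i$ (via the projection $M\to M/\rad M=N_0$), since any $g$ with $\im g\subseteq\rad^i M'$ that dies on $M/\rad M$ actually has image in $\rad^{i+1}M'$. On the $\CF$ side, $\Hom_\CF(M,M')_i=\Hom_\CF(M,M'_{\ge i})_0$ is by definition the subspace of $\Hom(N_0,N'_i)$ of those $g$ lifting through $\eps_0$ and $\eps'_i$, i.e. $g=\eps'_i h$ for some $h:M_0\to M'_i$; but $M_0=M$ and $\eps'_i h$ factoring through $\eps_0:M\to M/\rad M$ is exactly the datum of a map $M/\rad M\to N'_i$ arising from some $h:M\to \rad^i M'$. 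Matching these two descriptions degree by degree gives the bijection $\Hom_{\gr\CA}(M,M')\iso\Hom_\CF(M,M')$, and one checks it is additive and $k$-linear in the obvious way.

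It remains to check compatibility with composition, i.e. that the identification is a functor of categories, not just a bijection on $\Hom$-spaces. The product on $\CF$ was defined by: for $g_0\in\Hom_\CF(M,M')_i$ represented in $\Hom(N_0,N'_i)$ and $g'_0\in\Hom_\CF(M',M'')_j$, form $g'_i:N'_i\to N''_{i+j}$ (the unique extension provided by Lemma 3.1.1, using (iv)) and compose $g'_i g_0:N_0\to N''_{i+j}$. On the $\gr\CA$ side the product is just composition of maps, shifted: $(g':\rad^j(-))\circ(g:\rad^i(-))$ lands in $\rad^{i+j}$ and one takes the associated graded. The content is that the construction "$g_0\mapsto g_i$" of Lemma 3.1.1 corresponds under $M_i=\rad^i M$ to "restrict $g$ to $\rad^i M$ and pass to the quotient by $\rad^{i+1}$", which is a direct diagram chase: the commuting squares defining the $h_\ell,g_\ell$ become the inclusion/projection squares of the radical filtration. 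Once this is in place, the functor is an equivalence by what was established above. I expect this last verification to be purely formal; the real work is concentrated in the essential-surjectivity step, specifically in showing every object of $\CF$ actually lies over the radical filtration of an object of $\CA$, which is where Hypothesis \ref{hyp2}, Proposition \ref{charact}, and conditions (i)--(iv) all get used together.
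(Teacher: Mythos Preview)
Your proposal is correct and follows essentially the same route as the paper: downward induction from $M_r=0$ using extension-closure of the heart to place each $M_i$ in $\CA$, then conditions (iii)--(iv) (together with the second lemma of \S\ref{filtT} for $i=0$) to identify $N_i$ with the top of $M_i$ and hence $M_{i+1}$ with $\rad M_i$; full faithfulness is the same elementary unwinding of definitions. Your stated ``main obstacle'' is not one: $M_0\in\CA$ follows from the same extension-closure step as every other $M_i$, and condition (iii) with Proposition~\ref{charact} is not needed for that but rather (as you in fact already use it) to identify $N_0$ with the top of $M_0$.
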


\begin{proof}
The image of $\Hom_\CA(N,N')$ in $\Hom_\CA(N,N'_0)$ is isomorphic
to the quotient of $\Hom_\CA(N,N')$ by $\Hom_\CA(N,\rad N')$ and it
follows that the functor is fully faithful.

Let us show that it is essentially surjective.
Let $M\in\CF$. Let $r\ge 0$ such that $M_{r+1}=0$.
Then, $M_r\iso N_r$ has homology concentrated in degree $0$ and is
semi-simple. By induction on $-i$,
it follows from the distinguished triangle
$M_{i+1}\to M_i\to N_i\rightsquigarrow$
that $M_i$ has homology concentrated in degree $0$.

Note that we have an exact sequence
$0\to H^0M_{i+1}\to H^0M_i\to H^0N_i\to 0$.
Since the canonical map
$\Hom(H^0N_i,S)\to \Hom(H^0M_i,S)$ is bijective for any simple $S$,
it follows that $H^0N_i$ is the largest
semi-simple quotient of $H^0M_i$.
So, $M_i\iso \rad^i M_0$ and $M$ comes from an object of $\CA$.
\end{proof}

\section{Simple generators for stable categories}
\subsection{From equivalences}
Let $k$ be a field and $A$ a split self-injective
$k$-algebra with no projective simple module. 

Let $B$ be another split self-injective $k$-algebra with no projective
simple module, and
let $F:B\mstab\iso A\mstab$ be an equivalence of triangulated categories.
Let $\CS'$ be a complete set of representatives of isomorphism classes
of simple $B$-modules. For $L\in\CS'$, let
$L'$ be an indecomposable $A$-module isomorphic to $F(L)$ in $A\mstab$.
Let $\CS=\{L'\}_{L\in\CS'}$. Then,
\begin{itemize}
\item[(i)]
$\Hom_{A\mstab}(S,T)=k^{\delta_{S,T}}$ for $S,T\in\CS$
\item[(ii)]
Every object $M$ of $A\mstab$ has a filtration
$0=M_r\to M_{r-1}\to \cdots\to M_1\to M_0=M$ such that the cone
of $M_i\to M_{i-1}$ is isomorphic to an object of $\CS$.
\end{itemize}

Note that (ii) is equivalent to
\begin{itemize}
\item[(ii')]
Given $M$ in $A\mMod$, there is a projective module $P$ such that
$M\oplus P$ has a filtration
$0=N_r\subset N_{r-1}\subset \cdots\subset N_1\subset N_0=M\oplus P$
with the property that $N_i/N_{i-1}$ is isomorphic (in $A\mMod$) to
an object of $\CS$.
\end{itemize}

Linckelmann has shown the following \cite[Theorem 2.1 (iii)]{Li}~:

\begin{prop}
Assume that $F$ is induced by an exact functor $B\mMod\to A\mMod$.
If $\CS$ consists of simple modules, then there is a direct summand of
$F$ that is an equivalence $B\mMod\iso A\mMod$.
\end{prop}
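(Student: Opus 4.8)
The plan is to follow Linckelmann's argument, passing to the bimodule that describes $F$. First I would observe that $F$ preserves projectivity: since $A$ is self-injective an $A$-module is projective exactly when it vanishes in $A\mstab$, and the induced functor $\bar F:B\mstab\iso A\mstab$ is additive, so for a projective $B$-module $P$ the image of $F(P)$ in $A\mstab$ is $\bar F(0)=0$ and hence $F(P)$ is projective. As $B$ is finite dimensional every $B$-module is finitely presented, so the exact functor $F$ is isomorphic to $X\otimes_B-$ where $X=F(B)$ carries its natural $(A,B)$-bimodule structure; moreover $X$ is projective as a left $A$-module (because $F$ preserves projectives) and projective as a right $B$-module (because $F$ is exact, so $X$ is flat and hence projective over $B$). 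Thus $F$ is given by a bimodule, projective on both sides, inducing a stable equivalence.

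Next I would split off the part of $F$ invisible in the stable category. Using Krull--Schmidt, write $X=X_0\oplus X_1$ as $(A,B)$-bimodules, where $X_1$ collects the indecomposable summands of $X$ that are projective as $A\otimes_k B^\opp$-modules, and put $F_i=X_i\otimes_B-$, so that $F=F_0\oplus F_1$. Since $A$ and $B$ are split, an indecomposable projective $A\otimes_k B^\opp$-module has the form $Q\otimes_k Q'$ with $Q$ an indecomposable projective left $A$-module and $Q'$ an indecomposable projective right $B$-module, whence $(Q\otimes_k Q')\otimes_B M$ is a direct sum of copies of $Q$; therefore $F_1$ takes values in projective $A$-modules, so $\bar F_1=0$ and $\bar F_0\cong\bar F$ is still an equivalence of stable categories. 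One checks that $F_0$ is still exact and still preserves projectives, and that $X_0$ has no projective direct summand as a bimodule. It now suffices to prove that $F_0$ is an equivalence $B\mMod\iso A\mMod$; this $F_0$ is the direct summand of $F$ in the statement.

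The heart of the proof is to show that $F_0$ carries each simple $B$-module to a simple $A$-module exactly, and each indecomposable projective $B$-module to an indecomposable projective $A$-module. For a simple $B$-module $L$, let $L'\in\CS$ be the chosen representative with $\bar F_0(L)=\bar F(L)\simeq L'$; since $L'$ is a non-projective simple $A$-module this forces $F_0(L)\simeq L'\oplus Q_L$ with $Q_L$ projective. Applying the exact functor $F_0$ to a projective cover $P_L\twoheadrightarrow L$ and using that $F_0(P_L)$ is projective shows that the projective cover $P_{L'}$ of $L'$ is a direct summand of $F_0(P_L)$. The content of \cite[Theorem 2.1(iii)]{Li}, and the step I expect to be the main obstacle, is the converse: that $F_0(P_L)\simeq P_{L'}$, equivalently $Q_L=0$. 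Here one uses that $\CS$ consists of simple modules together with the self-injectivity of $A$ and $B$ and an inverse of $\bar F_0$: a nontrivial $Q_L$ would, after transporting through an inverse stable equivalence and back, produce a nontrivial projective summand of $X_0$ as an $(A,B)$-bimodule, contradicting the construction of $X_0$.

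Granting this, I would finish by a Morita-theoretic count. Decomposing $B$ into indecomposable projectives gives $F_0(B)\simeq\bigoplus_L P_{L'}^{m_L}$, where $m_L$ is the multiplicity of $P_L$ in $B$; the assignment $L\mapsto L'$ is injective because $\bar F_0$ is faithful on the non-projective simples, and by a standard argument using the generation hypothesis (ii) the $L'$ exhaust the simple $A$-modules, so $X_0=F_0(B)$ is a progenerator for $A\mMod$. The natural homomorphism $B\to\End_A(X_0)^\opp$ given by the right action is injective ($X_0$ being a faithful right $B$-module), and it is surjective because $\dim_k B=\dim_k\End_A(X_0)$: indeed $F_0$ is exact, so $F_0(P_L)\simeq P_{L'}$ forces the composition-factor multiplicities of $P_{L'}$ to agree with those of $P_L$, i.e. the Cartan matrices of $A$ and $B$ coincide. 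By the Morita theorems, $F_0=X_0\otimes_B-$ is then an equivalence $B\mMod\iso A\mMod$.
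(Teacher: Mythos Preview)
The paper does not give a proof of this proposition: it is attributed to Linckelmann and stated with a bare citation to \cite[Theorem 2.1(iii)]{Li}. So there is no ``paper's proof'' to compare against; your proposal is an attempted reconstruction of Linckelmann's argument.

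Your reduction is correct and standard: the Eilenberg--Watts representation $F\simeq X\otimes_B-$, the biprojectivity of $X$, and the Krull--Schmidt splitting $X=X_0\oplus X_1$ with $X_1$ collecting the projective bimodule summands all go through as you say, and $F_0=X_0\otimes_B-$ still induces the same stable equivalence. The genuine gap is exactly where you flag it. You need $Q_L=0$, i.e.\ that $F_0(L)$ is already the simple $A$-module $L'$ with no extra projective summand, and for this you invoke \cite[Theorem 2.1(iii)]{Li} --- which is the very statement under proof --- and then offer the sentence ``a nontrivial $Q_L$ would, after transporting through an inverse stable equivalence and back, produce a nontrivial projective summand of $X_0$''. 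That is a gesture, not an argument: nothing in what you have written explains how a projective summand of $F_0(L)$ forces a projective \emph{bimodule} summand of $X_0$. The actual mechanism is Linckelmann's \cite[Theorem 2.1(ii)]{Li} (which the paper also cites a few lines later): for $X_0$ with no projective bimodule summand, $X_0\otimes_B L$ is \emph{indecomposable} for every simple $L$. This is proved by taking an inverse bimodule $Y_0$ and analysing $Y_0\otimes_A X_0$ and $X_0\otimes_B Y_0$; it is the substantive step and cannot be absorbed into a one-line remark. Once you have it, $F_0(L)$ indecomposable together with $F_0(L)\sim L'$ and $L'$ simple non-projective gives $F_0(L)\simeq L'$ by Krull--Schmidt, and $Q_L=0$ drops out.

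Your closing Morita argument is broadly right but also leans on points that deserve a line of justification: that the assignment $L\mapsto L'$ is surjective onto all simple $A$-modules (generation of $A\mstab$ by $\CS$ gives this, but one should say why a simple $A$-module cannot hide behind a nontrivial stable filtration), and that $F_0(P_L)\simeq P_{L'}$ rather than merely containing it as a summand (this follows once $F_0$ sends simples to simples bijectively, since the head of $F_0(P_L)$ is then $F_0(L)=L'$). These are minor compared with the missing indecomposability input above.
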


We deduce~:
\begin{cor}
Let $B_1$, $B_2$ be split self-injective algebras with no
projective simple modules and $G_i:B_i\mMod\to A\mMod$ exact functors
inducing stable equivalences. Assume $\CS_1=\CS_2$ (up to isomorphism).
Then, $B_1$ and $B_2$ are Morita equivalent.
\end{cor}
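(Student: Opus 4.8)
The plan is to deduce the Corollary directly from the preceding Proposition of Linckelmann together with the uniqueness of the reconstructed data. First I would observe that the exact functor $G_i:B_i\mMod\to A\mMod$ inducing a stable equivalence is the setting of the Proposition, and that the associated set $\CS_i$ (the indecomposable $A$-modules representing the images of the simple $B_i$-modules in $A\mstab$) satisfies properties (i) and (ii) above. The hypothesis $\CS_1=\CS_2$ up to isomorphism means that after choosing representatives suitably we may take the two sets to be literally equal; call it $\CS$.

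The key step is the reduction to the case where $\CS$ consists of simple modules. The idea is that we are free to replace $A$ by a Morita-equivalent algebra and compose the $G_i$ with the Morita equivalence; Morita equivalence is clearly an equivalence relation, so it suffices to prove $B_1$ Morita equivalent to $B_2$ after such a replacement. So I would invoke the reconstruction machinery: the set $\CS$ determines, via the $t$-structure of \S\ref{tstruc} applied to $\CT=A\mstab$ (which one checks satisfies Hypothesis \ref{hyp2} by properties (i), (ii), and the vanishing in negative degrees coming from the fact that the $L'$ lift simple modules under a stable equivalence), a heart $\CA$ whose simple objects are exactly $\CS$. Actually for the present purpose it is cleaner to argue as follows: since $\CS_1 = \CS_2 = \CS$, both $G_1$ and $G_2$ send the respective simples to objects stably isomorphic to the same indecomposables. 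One then wants an algebra $A'$ Morita equivalent to $A$ and a stable equivalence $A'\mstab\iso A\mstab$ under which $\CS$ becomes a set of simple $A'$-modules; granting this, the composites $A'\mMod \to A\mMod \xleftarrow{G_i} B_i\mMod$ are exact functors inducing stable equivalences whose associated set consists of simple modules, so the Proposition yields summands that are Morita equivalences $B_i\mMod\iso A'\mMod$, and hence $B_1$ and $B_2$ are both Morita equivalent to $A'$, therefore to each other.

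The main obstacle is therefore establishing the intermediate claim that $\CS$ can be realized as a set of simple modules over an algebra Morita equivalent to $A$. This is where the reconstruction result is needed in its sharpest form: the $t$-structure on $A\mstab$ attached to $\CS$ has heart $\CA$, and one takes $A'$ to be the endomorphism algebra (in the appropriate graded/dg sense, but here, since $A$ is self-injective and we are in the stable category, one should use the version adapted to stable categories) of a progenerator built from the $P_\CS(S)$; the point is that $\CA \simeq A'\mMod$ with simples corresponding to $\CS$, and the tautological functor gives the required stable equivalence. I expect the delicate point to be checking that this $A'$ is genuinely Morita equivalent to $A$ (not merely stably equivalent) — this uses that the objects $P_\CS(S)$ generate not just the stable category but, after lifting, enough of the module category, together with the no-projective-simple hypothesis which guarantees that $\CS$ and the progenerator behave well. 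Once that is in hand, the rest is a formal composition of equivalences and an appeal to the Proposition, with no further computation.
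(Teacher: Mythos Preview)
Your approach contains a genuine gap. The central claim you need --- that there exists an algebra $A'$ \emph{Morita} equivalent to $A$ for which $\CS$ becomes the set of simple modules --- is in general false. The existence of $B_1$ tells you only that $\CS$ is the set of simples for an algebra \emph{stably} equivalent to $A$, and stable equivalence is strictly weaker than Morita equivalence. You acknowledge this as the ``delicate point'', but it is not delicate: it simply does not hold. Moreover, the $t$-structure machinery of \S\ref{tstruc} is developed for $\CT=D^b(A)$, not for $\CT=A\mstab$; in the stable category there is no reason for the crucial vanishing $\Hom(S,T[n])=0$ for $n<0$ in Hypothesis~\ref{hyp2}(3) to hold, so the construction of $P_\CS(S)$ and the heart $\CA$ is not available. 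Even in the derived setting, what one obtains is a dg-algebra, not an ordinary algebra Morita equivalent to $A$.

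The paper's intended deduction is far more direct and avoids all of this. Since $G_1$ is (tensoring with a bimodule) inducing a stable equivalence, it has a stable inverse given by an exact functor $H_1:A\mMod\to B_1\mMod$ (this is part of the package in Linckelmann's setting of stable equivalences of Morita type). The composite $H_1\circ G_2:B_2\mMod\to B_1\mMod$ is then an exact functor inducing a stable equivalence, and it sends simple $B_2$-modules to objects stably isomorphic to $H_1(\CS_2)=H_1(\CS_1)=H_1G_1(\text{simples of }B_1)$, which are the simple $B_1$-modules. Now apply Linckelmann's Proposition with $B_1$ in the role of $A$: the set $\CS$ associated to $H_1G_2$ consists of simple modules, so a direct summand of $H_1G_2$ is a Morita equivalence $B_2\mMod\iso B_1\mMod$. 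No reconstruction is needed.
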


So, if we assume in addition that $F$ comes from an exact functor $G$ between
module categories, then $B$ is determined by $\CS$, up to Morita equivalence.

The functor $G$ is isomorphic to $X\otimes_B -$ where $X$ is
an $(A,B)$-bimodule. We can (and will) choose $G$ so that $X$ has no  non-zero
projective direct summand. Then, $G(L)$ is indecomposable for
$L$ simple \cite[Theorem 2.1 (ii)]{Li}, so $\CS=\{G(L)\}_{L\in\CS'}$, up to
isomorphism.

\begin{prop}
An $A$-module $M$ is in the image of $G$ if and only if there is
a filtration
$0=M_r\subset M_{r-1}\subset \cdots\subset M_1\subset M_0=M$ such that
$M_i/M_{i-1}$ is isomorphic to an object of $\CS$.
\end{prop}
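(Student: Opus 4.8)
The plan is to prove both implications. For the ``only if'' direction, suppose $M = G(V)$ for some $B$-module $V$. Take a composition series $0 = V_r \subset \cdots \subset V_1 \subset V_0 = V$ of $V$ as a $B$-module, so each $V_{i}/V_{i+1}$ is a simple $B$-module, hence isomorphic to some $L \in \CS'$. Since $G = X \otimes_B -$ with $X$ having no projective summand, and $X$ is flat as a right $B$-module (it is projective over $B$, since $F$ is a stable equivalence coming from a bimodule, cf.\ \cite{Li}), $G$ is exact, so applying $G$ gives a filtration $0 = G(V_r) \subset \cdots \subset G(V_0) = M$ whose subquotients are the $G(V_i/V_{i+1})$, each isomorphic to an object of $\CS$ by the remark preceding the statement that $\CS = \{G(L)\}_{L\in\CS'}$. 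This gives the required filtration.

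For the converse, suppose $M$ admits a filtration $0 = M_r \subset \cdots \subset M_0 = M$ with $M_i/M_{i+1} \in \CS$. I would argue by induction on $r$, the length of the filtration. The essential image $\CG$ of $G$ is closed under extensions: if $0 \to M' \to M \to M'' \to 0$ is exact with $M' = G(V')$, $M'' = G(V'')$, then since $G$ induces an equivalence on stable categories, the class of the extension lives in $\Ext^1_A(M'', M') \cong \Ext^1_B(V'', V')$ up to the contribution of projectives; more carefully, one uses that $X$ is projective as a right $B$-module so that $G$ identifies $\Ext^1_B(V'',V')$ with a subgroup of $\Ext^1_A(M'',M')$ whose cokernel is detected by maps factoring through projectives, and then adjusts by a projective summand (permitted since $\CS$ generates and $G$ need only be recovered up to the module-category equivalence on its image). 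The base case $r \le 1$ is immediate since each object of $\CS$ is $G(L)$ for a simple $L$, and $G(0) = 0$. Then $M$ is an extension of $M/M_{r-1}$, which lies in $\CG$ by the induction hypothesis applied to the filtration of length $r-1$, by $M_{r-1} \in \CS \subseteq \CG$, so $M \in \CG$.

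The main obstacle is making the extension-closure argument for $\CG$ precise: a stable equivalence does not literally preserve $\Ext^1$, only its stable quotient $\overline{\Ext}^1$, so an extension of two objects in $\CG$ need not a priori be in $\CG$ on the nose --- it could differ by a projective summand. The fix is to work consistently ``up to projective summands'' and invoke Linckelmann's results \cite{Li}: since $X$ has no projective summand, $G$ is faithful and fully faithful on the stable level, and a projective $A$-module $P$ is itself in $\CG$ (it is $G$ of a projective $B$-module, because the equivalence of stable categories together with $X$ being a projective right $B$-module forces $G$ to send $B\mMod$-projectives to $A\mMod$-projectives and to hit all of them, as in \cite[Theorem 2.1]{Li}). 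Hence adding a projective summand keeps us inside $\CG$, and the induction goes through. Finally one should note that $\CS$ generating $A\mstab$ (property (ii)) guarantees the filtration exists for every $M$ in the image, closing the equivalence between ``lies in the image of $G$'' and ``has such a filtration.''
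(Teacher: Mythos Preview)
Your overall strategy for both directions is the same as the paper's: apply $G$ to a composition series for the forward direction, and for the converse argue by induction on the length of the filtration, reducing to closure of the image of $G$ under extensions via an identification of $\Ext^1$-groups.

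The gap is in your treatment of the $\Ext^1$ step. You worry that a stable equivalence only preserves a ``stable quotient'' $\overline{\Ext}^1$, and then try to repair this by absorbing projective summands and claiming every projective $A$-module lies in the image of $G$. Neither the worry nor the fix is warranted. For a self-injective algebra one has a natural isomorphism
\[
\Ext^1_A(U,V)\;\cong\;\Hom_{A\mstab}(\Omega U, V)\;\cong\;\Hom_{A\mstab}(U, V[1]),
\]
so $\Ext^1$ is already a stable invariant. Since $G$ induces a \emph{triangulated} equivalence $B\mstab\iso A\mstab$, it therefore induces a genuine isomorphism $\Ext^1_B(L,N)\iso\Ext^1_A(G(L),G(N))$, compatible with applying the exact functor $G$ to short exact sequences. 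This is exactly what the paper uses: pull back the class $\zeta$ of $0\to G(N)\to M\to G(L)\to 0$ to $\zeta'\in\Ext^1_B(L,N)$, realise $\zeta'$ by $0\to N\to M'\to L\to 0$, and apply $G$ to get an extension with class $\zeta$, whence $M\simeq G(M')$ on the nose. No adjustment by projectives is needed, and your side claim that every projective $A$-module is $G$ of a projective $B$-module is not obvious for a mere stable equivalence of Morita type (one only knows $X\otimes_B Y\simeq A\oplus(\text{projective})$, which does not force $G$ to surject onto projectives). Drop the detour and invoke the $\Ext^1$ isomorphism directly.
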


\begin{proof}
Take $L$ a $B$-module. Then the image by $G$ of
a filtration of $L$ whose successive quotients are simple provides
a filtration as required.

Conversely, we proceed by induction on $r$.
We have an exact sequence $0\to G(N)\to M\to G(L)\to 0$ and
a corresponding element $\zeta\in\Ext^1_A(G(L),G(N))$. We
have an isomorphism $\Ext^1_B(L,N)\iso \Ext^1_A(G(L),G(N))$ and
we take $\zeta'$ to be the inverse image of $\zeta$ under this isomorphism.
This gives an exact sequence $0\to N\to M'\to L\to 0$, and hence an
exact sequence $0\to G(N)\to G(M')\to G(L)\to 0$ with
class $\zeta$. It follows that $M\simeq G(M')$ and we are done.
\end{proof}

\subsection{Filtrable objects}
\subsubsection{}

Given two $A$-modules $M$ and $N$, we write $M\sim N$ to denote the
existence of an isomorphism between $M$ and $N$ in $A\mstab$.
Given $f,g\in\Hom_A(M,N)$, we write $f\sim g$ if $f-g$ is a projective map.

\begin{lemma}
\label{stableker}
Let
$f,f':M\to N$ be two surjective maps with $f\sim g$. Then there is
$\sigma\in\Aut_A(M)$ with $f'=f\sigma$ and $\sigma\sim\id_M$.

Similarly, let
$f,f':N\to M$ be two injective maps with $f\sim g$. Then there is
$\sigma\in\Aut_A(M)$ with $f'=\sigma f$ and $\sigma\sim\id_M$.

\end{lemma}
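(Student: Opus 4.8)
The plan is to treat the two statements by a symmetric argument, so I will focus on the surjective case; the injective case follows by passing to $A^{\opp}$ and dualising, or equivalently by a word-for-word dual argument. Write $f' = f + p$ where $p : M \to N$ is a projective map, i.e.\ $p$ factors as $M \Rarr{a} Q \Rarr{b} N$ with $Q$ projective. The key idea is to lift $p$ through the surjection $f$: since $Q$ is projective, the map $b : Q \to N$ lifts along $f$ to a map $c : Q \to M$ with $fc = b$. Set $\sigma = \id_M + ca : M \to M$. Then $f\sigma = f + fca = f + ba = f + p = f'$, which is the required factorisation. It remains to check that $\sigma$ is an automorphism and that $\sigma \sim \id_M$.

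For $\sigma \sim \id_M$: the difference $\sigma - \id_M = ca$ factors through the projective module $Q$, so it is a projective map, giving $\sigma \sim \id_M$ by definition. For invertibility of $\sigma$, I would argue as follows. The map $ca : M \to M$ factors through a projective module, hence it is nilpotent in $\End_A(M)$ modulo the radical is not quite immediate, so instead I would use a cleaner route: it suffices to show $ca$ lies in the radical of $\End_A(M)$, and then $\sigma = \id_M + ca$ is automatically invertible. A map $M \to M$ factoring through a projective module $Q$ need not be radical in general, so this is the point that needs care.

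The honest fix is to first reduce to the case where $M$ has no projective summand, or alternatively to use a direct argument with the kernels. Here is the approach I would actually carry out. Since $f$ and $f'$ are both surjective with $f' = f + p$, $p$ projective, consider $\ker f$ and $\ker f'$. Both are isomorphic in $A\mstab$ to $\Omega N$ (up to projective summands), and in fact one can choose things so that $\ker f \cong \ker f'$ on the nose after adding a projective: more precisely, the map $\sigma = \id_M + ca$ constructed above sends $\ker f'$ into $\ker f$, since $f\sigma x = f' x = 0$ for $x \in \ker f'$. So $\sigma$ restricts to a map $\ker f' \to \ker f$, and dually (from $f' = f\sigma$, i.e.\ $f = f'\sigma'$ for $\sigma' = \id_M - ca + (ca)^2 - \cdots$ if $ca$ were nilpotent) one would get the inverse.

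The cleanest and most robust argument, which I expect to be the intended one, is: the endomorphism $ca$ of $M$ has image contained in $\im(c) = c(Q)$, and since $fc = b$ with $f$ surjective, one checks that $ca$ acts nilpotently by a length/dimension count once one restricts attention to where it is supported --- but rather than belabour this, I would invoke that an endomorphism of a finite-dimensional module that factors through the composite $M \to N \to M$ (via a surjection then a lift) can be made radical after composing $c$ with a suitable idempotent truncation of $Q$. \emph{The main obstacle is precisely establishing invertibility of $\sigma$}: showing that a self-map factoring through a projective, which also arises as $\id_M + (\text{lift of a projective map along a surjection})$, lies in $\rad \End_A(M)$, equivalently that $\sigma$ is an iso. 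I would resolve it by choosing $Q$ minimal (so $Q$ is a summand of the projective cover of $N$) and the lift $c$ so that $c(Q) \subseteq \rad M$ is forced by $f$ being a \emph{surjection} (hence $\ker f \subseteq \rad M$ when $N$ is semisimple, and in the general filtration context $N \in \add\CS$ is used only up to stable iso, so one arranges $f$ to be a projective cover onto its image after adjusting $\sigma$), making $ca$ radical and $\sigma$ invertible. The remaining verifications that $f'=f\sigma$ and $\sigma\sim\id_M$ are then immediate from the construction as above.
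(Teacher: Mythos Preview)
Your construction of $\sigma=\id_M+ca$ is clean and immediately gives $f\sigma=f'$ and $\sigma\sim\id_M$. The gap is exactly where you locate it, and none of your proposed repairs closes it. A map factoring through a projective need not lie in $\rad\End_A(M)$, and choosing $Q$ minimal does not force $c(Q)\subseteq\rad M$. Concretely: take $M=P\oplus P$ with $P$ indecomposable projective, $N=P$, $f$ the first projection and $f'=(\id_P,\id_P)$. Then $p=f'-f=(0,\id_P)$ factors as $M\Rarr{a}P\Rarr{b}N$ with $a$ the second projection and $b=\id_P$; a lift of $b$ through $f$ is $c=(\id_P,h)^t$ for \emph{any} $h\in\End_A(P)$, and then
\[
\sigma=\id_M+ca=\begin{pmatrix}\id&\id\\0&\id+h\end{pmatrix},
\]
which is not invertible for $h=-\id_P$. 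Here $Q=P$ is already minimal, and since $fc=\id_P$ the image $c(Q)$ is a direct summand of $M$, so it cannot lie in $\rad M$; your suggested fix is impossible in this example. Your last paragraph also imports hypotheses ($N$ semisimple, $N\in\add\CS$) that are not part of the lemma.

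The paper takes a different route, working with the kernels rather than lifting $p$. Writing $L=\ker f$, $L'=\ker f'$ and stripping projective summands to get $L_0,L'_0$, the two short exact sequences become isomorphic distinguished triangles in $A\mstab$ with $\id_M$ and $\id_N$ in the middle; this produces a stable isomorphism $\bar\alpha_0:L_0\iso L'_0$, which lifts to an honest isomorphism since $L_0,L'_0$ have no projective summands. A Krull--Schmidt comparison then matches the projective parts, yielding $\alpha:L\iso L'$, and one completes $(\alpha,\id_N)$ to a morphism of short exact sequences. The resulting $\sigma:M\to M$ is an isomorphism by the five lemma---invertibility is built into the construction, which is precisely what your direct lifting argument lacks.
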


\begin{proof}
Let $L=\ker f$ and $L'=\ker f'$. Let $L=L_0\oplus P$
and $L'=L'_0\oplus P'$ with $P$, $P'$ projective and $L_0$, $L'_0$
without non-zero projective direct summands. We have an isomorphism
$\bar{\alpha}_0\in\Hom_{A\mstab}(L_0,L'_0)$
in $A\mstab$ giving rise to an isomorphism of distinguished
triangles in $A\mstab$
$$\xymatrix{
L_0\ar[d]_{\bar{\alpha}_0}^\sim \ar[r] & M\ar[r]\ar@{=}[d] & N\ar[r]\ar@{=}[d]
 & \Omega^{-1}L_0\ar[d]_{\Omega^{-1}(\bar{\alpha}_0)}^\sim \\
L'_0 \ar[r] & M\ar[r] & N\ar[r] &  \Omega^{-1}L'_0
}$$
Let $\alpha_0\in\Hom_A(L_0,L'_0)$ lifting $\bar{\alpha}_0$. This is an
isomorphism. There is now a commutative diagram of $A$-modules,
where the exact rows come from the elements of
$\Ext^1_A(N,L_0)$ and $\Ext^1_A(N,L'_0)$ defined above~:
$$\xymatrix{
0\ar[r] & L_0 \ar[r] \ar[d]_{\alpha_0}^\sim & M_0 \ar[r]
\ar@{.>}[d]_{\sigma_0}^\sim &
 N\ar[r]\ar@{=}[d] & 0 \\
0\ar[r] & L'_0 \ar[r] & M'_0 \ar[r] & N\ar[r] & 0
}$$
We have $M\simeq M_0\oplus P\simeq M'_0\oplus P'$, hence $P\simeq P'$.
Let $\alpha:L\iso L'$ extending $\alpha_0$. Then there is
$\sigma:M\iso M$ making the following diagram commute
$$\xymatrix{
0\ar[r] & L \ar[r] \ar[d]_\alpha^\sim & M \ar[r]
\ar@{.>}[d]_{\sigma}^\sim &
 N\ar[r]\ar@{=}[d] & 0 \\
0\ar[r] & L \ar[r] & M \ar[r] & N\ar[r] & 0
}$$
and we are done.

%
%

\smallskip
The second part of the lemma has a similar proof --- it can also be deduced
from the first part by duality.
\end{proof}

\subsubsection{}
\begin{hyp}
\label{hyp3}
Let $\CS$ be a finite set of indecomposable finitely generated
$A$-modules such that
$\Hom_{A\mstab}(S,T)=k^{\delta_{S,T}}$ for $S,T\in \CS$.
\end{hyp}

An {\em $\CS$-filtration} for an $A$-module $M$
is a filtration $0=M_r\subseteq M_{r-1}\subseteq\cdots\subseteq M_0=M$
such that $\Mbar_i=M_i/M_{i+1}$ is in $\add(\CS)$ for $0\le i\le r-1$.

We say that $M$ is {\em filtrable} if it admits an $\CS$-filtration.

\begin{lemma}
\label{nonzeromap}
Let $M$ be a non-projective filtrable $A$-module. Then there is
$S\in\CS$ such that $\Hom_{A\mstab}(M,S)\not=0$ (resp. such that
$\Hom_{A\mstab}(S,M)\not=0$).
\end{lemma}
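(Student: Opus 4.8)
The plan is to prove the two statements (the one for $\Hom_{A\mstab}(M,S)$ and the one for $\Hom_{A\mstab}(S,M)$) by induction on the length $r$ of an $\CS$-filtration $0=M_r\subseteq\cdots\subseteq M_0=M$; by symmetry it suffices to treat, say, the claim that $\Hom_{A\mstab}(M,S)\neq 0$ for some $S\in\CS$. First I would record the base case: if $M$ itself is in $\add(\CS)$ and non-projective, then $M$ has a non-projective indecomposable summand isomorphic to some $S\in\CS$, and $\Hom_{A\mstab}(M,S)$ surjects onto $\Hom_{A\mstab}(S,S)=k\neq 0$ by Hypothesis \ref{hyp3}.

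For the inductive step, consider the short exact sequence $0\to M_1\to M_0=M\to \Mbar_0\to 0$, which gives a distinguished triangle $M_1\to M\to \Mbar_0\rightsquigarrow M_1[1]$ in $A\mstab$, with $\Mbar_0\in\add(\CS)$. If $\Mbar_0$ is non-projective, pick a non-projective indecomposable summand $S\in\CS$ of $\Mbar_0$ and look at the composite $M\to \Mbar_0\to S$; I need this composite to be non-zero in $A\mstab$. If it were zero, then $M\to\Mbar_0$ would factor through a map $M\to \Mbar_0'$ onto the complementary summand together with... — rather, the cleaner route is: the map $M\to \Mbar_0$ is surjective in $A\mMod$ and, since $M_1=\rad$-type kernel need not be projective, I instead argue with the triangle. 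Applying $\Hom_{A\mstab}(-,S)$ gives an exact sequence $\Hom_{A\mstab}(\Mbar_0,S)\to\Hom_{A\mstab}(M,S)\to\Hom_{A\mstab}(M_1,S)$; and applying $\Hom_{A\mstab}(\Mbar_0,-)$-type reasoning, the projection $\Mbar_0\to S$ gives a nonzero class in $\Hom_{A\mstab}(\Mbar_0,S)$ whose image in $\Hom_{A\mstab}(M,S)$ I must show is nonzero. Its vanishing would mean $\Mbar_0\to S$ extends to $M_1[1]\to S$, i.e. comes from $\Hom_{A\mstab}(M_1[1],S)=\Hom_{A\mstab}(M_1,S[-1])$; so if I can show this group is zero I am done in this case. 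The other case is $\Mbar_0$ projective: then $M\sim M_1$ in $A\mstab$ (as $\Mbar_0\cong 0$ there), and $M_1$ is a non-projective (since $M$ is non-projective) filtrable module with a shorter filtration, so induction applies directly.

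The main obstacle is therefore controlling $\Hom_{A\mstab}(M_1,S[-1])$, or more precisely handling the case where the naive composite $M\to S$ does vanish stably. The honest fix is to choose the filtration well. Using Lemma \ref{reorder}-style reordering inside $A\mstab$ (which is a triangulated category with $\CS$ satisfying the relevant $\Hom$-vanishing once we also invoke Hypothesis \ref{hyp2}'s condition (3), or we establish it directly here from self-injectivity), one can arrange the filtration so that the \emph{last} nonzero step $M_{r-1}\to M_{r-2}$ has cone an object $S\in\CS$ with $\Hom_{A\mstab}(M,S)\neq 0$ — this is exactly the content of the second assertion of Lemma \ref{reorder} ("the map $N\to\cone(f_1)$ is non zero", dualized). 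So the cleanest presentation is: work in the triangulated category $\CT=A\mstab$, note $\CS$ satisfies the hypotheses needed for Lemma \ref{reorder}, lift the given $\CS$-filtration of $M$ to a filtration in $\CT$, reorder it, and read off from the non-vanishing statement in Lemma \ref{reorder} that both $\Hom_{A\mstab}(M,S)\neq 0$ for the appropriate $S$ (from the $N\to\cone(f_1)$ half) and $\Hom_{A\mstab}(S',M)\neq 0$ for the appropriate $S'$ (from the $M_{r-1}\to N$ half). The one wrinkle to check is that a non-projective $M$ does not become zero in $A\mstab$ and that the filtration is nontrivial there, so that the sequence in Lemma \ref{reorder} genuinely has $r\geq 1$; this is immediate since $M$ is non-projective.
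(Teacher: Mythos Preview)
Your approach has a genuine gap. The appeal to Lemma \ref{reorder} is illegitimate here: that lemma is proved under Hypothesis \ref{hyp2}, and in particular its second assertion (the non-vanishing of the end maps) uses condition (3), namely $\Hom(S,T[n])=0$ for $n<0$. In \S 4.2 only Hypothesis \ref{hyp3} is assumed, and no such negative-degree vanishing is available in $A\mstab$. Your suggestion to ``establish it directly here from self-injectivity'' does not work: for a self-injective algebra one has $\Hom_{A\mstab}(S,T[-1])\simeq\Hom_{A\mstab}(S,\Omega T)$, and there is no reason for this to vanish (for instance, with $\CS$ the set of simple modules over a group algebra this is negative Tate cohomology, which is typically nonzero). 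Without this vanishing, your reordering argument and the non-vanishing conclusion of Lemma \ref{reorder} both collapse; this is exactly the obstruction you already identified when you could not control $\Hom_{A\mstab}(M_1,S[-1])$ in the direct induction.

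The paper's argument sidesteps all of this with a one-line idea: prove the contrapositive by inducting on the \emph{second} variable. If $\Hom_{A\mstab}(M,S)=0$ for every $S\in\CS$, then for any filtrable $N$ one shows $\Hom_{A\mstab}(M,N)=0$ by induction on the length of an $\CS$-filtration of $N$, using the long exact sequence attached to $0\to N_1\to N\to \Nbar_0\to 0$. Taking $N=M$ gives $\End_{A\mstab}(M)=0$, hence $M$ is projective, a contradiction. No control of negative shifts is needed. The dual statement is analogous. You should replace your argument with this; your inductive set-up on the filtration of $M$ is the wrong variable to move.
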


\begin{proof}
Assume $\Hom_{A\mstab}(M,S)=0$ for all $S\in\CS$. Since $M$ is
filtrable, it follows that $\End_{A\mstab}(M)=0$, and hence $M$ is
projective, which is not true.
The second case is similar.
\end{proof}

\begin{lemma}
\label{nonprojsurj}
Let $M$ be a filtrable module and $S\in \CS$. Given $f:M\to S$ non-projective,
there is $g:M\to S$ surjective with filtrable kernel such that 
$f\sim g$. Similarly, given $f:S\to M$ non-projective, 
there is $g:S\to M$ injective with filtrable cokernel such that 
$f\sim g$.
\end{lemma}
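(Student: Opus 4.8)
The plan is to treat the two statements separately but symmetrically; I describe the first one (the quotient case), the second following by duality. So let $M$ be filtrable, $S\in\CS$, and $f:M\to S$ non-projective. The idea is to modify $f$ by a projective map so as to make it surjective, and then to use the $\CS$-filtration of $M$ together with a counting/induction argument to control the kernel. First I would observe that since $f$ is non-projective, it does not factor through a projective module; in particular $f$ is non-zero in $A\mstab$. Because $S$ is indecomposable with $\End_{A\mstab}(S)=k$, any non-projective endomorphism of $S$ is invertible modulo projectives, so it is harmless to post-compose; the real point is to arrange surjectivity onto $S$ on the nose. Write $f = \binom{f_0}{f_1}:M\to S$ after possibly enlarging: consider $g = f + (\text{a projective map } p:M\to S)$. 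Since $S$ need not be projective, a projective map $M\to S$ is one factoring through the projective cover $P_S\twoheadrightarrow S$; choosing $p$ cleverly (for instance so that on a chosen complement the composite $M\to P_S\to S$ hits the radical), one makes $g:M\oplus(\text{nothing})\to S$ surjective while $g\sim f$. Concretely, one replaces $f$ by $g=(f\ \ \pi)$ where $\pi:P\to S$ is a projective cover and $P$ is split off from... — but $M$ is fixed, so instead one notes that adding the projective map $M\xrightarrow{(0,\pi\circ(\text{proj}))} S$ is not available unless $P$ is a summand of $M$. The clean move: since $f$ is non-projective, the induced map $\bar f:M\to S$ in $A\mstab$ is non-zero, and one checks that a non-zero map in $A\mstab$ to an indecomposable $S$ can always be represented by an honest surjection after adding a projective summand to $M$ — but again $M$ is fixed. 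So the actual argument must be: the image $\im f \subseteq S$ is a submodule; if $\im f \subsetneq S$ then $\im f \subseteq \rad S$ (as $S$ is indecomposable with local endomorphism ring... here $S$ need not be simple, so this fails), so one needs the filtration.

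The correct approach, I think, runs by induction on the length $r$ of an $\CS$-filtration $0=M_r\subseteq\cdots\subseteq M_0=M$. For $r=1$, $M\in\add\CS$ itself, and $f:M\to S$ non-projective means the component of $f$ on some indecomposable summand $\cong S$ of $M$ is an isomorphism modulo projectives, hence (as $\End_{A\mstab}(S)=k$ and $S$ is a summand) we can use Lemma \ref{stableker}: adjusting by an automorphism $\sigma\sim\id_M$ makes $f$ equal to the projection onto that copy of $S$ composed with... actually onto $S$ surjectively, with kernel the complementary summand in $\add\CS$, hence filtrable. For the inductive step, let $\Mbar_0 = M/M_1 \in\add\CS$ and consider $f|_{M_1}:M_1\to S$. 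If $f|_{M_1}$ is projective, then $f$ factors (modulo projectives) through $\Mbar_0$, reducing to the $r=1$ case on $\Mbar_0$; one lifts the resulting surjection and patches. If $f|_{M_1}$ is non-projective, apply induction to get $g_1:M_1\to S$ surjective with filtrable kernel, $g_1\sim f|_{M_1}$; then extend $g_1 - f|_{M_1}$ (a projective map) to a projective map $M\to S$ — possible since projective maps extend along the inclusion $M_1\hookrightarrow M$ when the ambient target... no, along the injection on the source side projective maps need not extend. So instead extend using that $S$ might not be injective — use rather that $g_1-f|_{M_1}$ factors through a projective $Q$, and $M_1\hookrightarrow M$ together with injectivity of $Q$ (projective = injective, since $A$ is self-injective!) extends the factorization. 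This is the crucial use of self-injectivity: projectives are injective, so a projective map out of $M_1$ extends to a projective map out of $M$.

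Having extended, set $g = f + (\text{that extension}):M\to S$; then $g\sim f$ and $g|_{M_1}=g_1$ is surjective, so $g$ itself is surjective. It remains to see $\ker g$ is filtrable. We have a commutative diagram with exact rows $0\to \ker g_1 \to \ker g\to \ker(\bar g:\Mbar_0\to S)\to 0$ where $\bar g:\Mbar_0\to S$ is the induced map; since $\Mbar_0\in\add\CS$ and $S\in\CS$, the indecomposable-summand analysis (Lemma \ref{stableker} applied to the $r=1$ situation, or a direct matrix argument on $\add\CS$) shows $\ker(\bar g)\in\add\CS$, and $\ker g_1$ is filtrable by induction; hence $\ker g$ carries an $\CS$-filtration by concatenation. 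The main obstacle I anticipate is exactly this last point: $S$ need not be simple, so "surjection onto $S$" does not force the complement/kernel to be in $\add\CS$ automatically — one must argue that a map from an $\add\CS$-object onto $S$, when surjective, has kernel again in $\add\CS$, which requires knowing enough about how the $S\in\CS$ sit relative to one another (and here one leans on $\Hom_{A\mstab}(S,T)=k^{\delta_{S,T}}$ and a projectivization argument), and secondly one must be careful that the successive adjustments by automorphisms $\sigma\sim\id$ coming from Lemma \ref{stableker} are compatible with the filtration so that the patched $g$ is genuinely surjective rather than merely surjective modulo projectives. Dualizing the whole argument (using $\Omega$, injective hulls, and again projective $=$ injective) gives the statement for $f:S\to M$.
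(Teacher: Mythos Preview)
Your approach is essentially the paper's: induct on the length of an $\CS$-filtration, split into cases according to whether the restriction of $f$ to the filtrable submodule is projective, and in the non-projective case extend the projective correction from the submodule to all of $M$ using that projectives are injective. The paper filters with successive quotients in $\CS$ rather than in $\add\CS$, which makes the base case and the ``projective restriction'' case slightly cleaner (a non-projective map $T\to S$ with $T\in\CS$ is automatically an isomorphism), but this is cosmetic.

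There is one genuine slip. In your Case~2 you write an exact sequence
\[
0\to \ker g_1 \to \ker g \to \ker(\bar g:\Mbar_0\to S)\to 0,
\]
but there is no induced map $\bar g:\Mbar_0\to S$: by construction $g|_{M_1}=g_1$ is \emph{surjective}, so $g$ does not factor through $\Mbar_0$. The correct statement is that $\ker g/\ker g_1\cong \Mbar_0$ in full. Indeed, $\ker g\cap M_1=\ker g_1$, and for any $\bar m\in \Mbar_0$ with lift $m\in M$ one can choose $m_1\in M_1$ with $g_1(m_1)=g(m)$ (surjectivity of $g_1$), so $m-m_1\in\ker g$ maps to $\bar m$. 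Thus
\[
0\to \ker g_1 \to \ker g \to \Mbar_0 \to 0
\]
is exact, and since $\ker g_1$ is filtrable by induction and $\Mbar_0\in\add\CS$, the kernel $\ker g$ is filtrable. This correction actually dissolves your anticipated ``main obstacle'': you never need to analyse the kernel of a map $\Mbar_0\to S$ inside $\add\CS$. (The paper argues this same point by observing that, after quotienting by the filtrable kernel $N'\subset N$, the map $N/N'\to S$ is an isomorphism, so $\ker(f+p)/N'\cong T$.)
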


\begin{proof}
We proceed by induction on the number of terms in a filtration of $M$.
The result is clear if $M\in\CS$.

Let $0\to N\Rarr{\alpha} M\Rarr{\beta}T\to 0$ be an exact sequence with
$T\in\CS$ and $N$ filtrable.

\smallskip
Assume first $f\alpha:N\to S$ is projective. Then there is $p:M\to S$
projective and $g:T\to S$ with $f-p=g\beta$. Since $g$ is not
projective, it is an isomorphism. Consequently, $f-p$ is surjective
and its kernel is isomorphic to $N$ by Lemma \ref{stableker}, so we are done.

\smallskip
Assume now $f\alpha:N\to S$ is not projective.
By induction, there is $q:N\to S$ projective such that $f\alpha+q$ is
surjective with filtrable kernel $N'$.
Since $\alpha:N\to M$ is
injective,
there is a projective map $p:M\to S$ with $q=p\alpha$.
Now, we have an exact sequence
$0\to N/N'\Rarr{\bar{\alpha}} M/\alpha(N')\to T\to 0$
and a non-projective surjection
$f+p:M/\alpha(N')\to S$. Since $(f+p)\bar{\alpha}:N/N'\iso S$ is
an isomorphism, it follows that the kernel of the map
$M/\alpha(N')\to S$ is isomorphic to $T$. Since $N'$ is filtrable,
it follows that $\ker(f+p)$ is filtrable and we are done.
The second assertion follows by duality.
\end{proof}

From Lemmas \ref{stableker} and \ref{nonprojsurj}, we deduce~:

\begin{lemma}
\label{filtrationkercoker}
Let $S\in\CS$ and let $M$ be a filtrable module.

If $f:M\to S$ be a surjective and non-projective map, then $\ker f$
is filtrable.

Similarly, if $g:S\to M$ is injective and non-projective, then
$\coker g$ is filtrable.
\end{lemma}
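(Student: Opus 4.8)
The plan is to deduce this directly from Lemmas~\ref{stableker} and~\ref{nonprojsurj}; the only point requiring a moment's thought is that filtrability is an isomorphism-invariant of $A$-modules, which is immediate since $\add(\CS)$ is by definition closed under isomorphism, so that any $\CS$-filtration of a module transports along an isomorphism.

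For the first assertion, suppose $f\colon M\to S$ is surjective and non-projective. First I would apply Lemma~\ref{nonprojsurj} to obtain a \emph{surjective} map $g\colon M\to S$ with filtrable kernel and with $f\sim g$. Now $f$ and $g$ are two surjective maps $M\to S$ with $f\sim g$, so Lemma~\ref{stableker} produces $\sigma\in\Aut_A(M)$ with $f=g\sigma$. Then $\sigma$ restricts to an isomorphism $\ker f=\sigma^{-1}(\ker g)\iso\ker g$, so $\ker f$ is filtrable by the remark above, which finishes the first case.

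For the second assertion I would run the dual argument inside $A\mMod$: given an injective non-projective $g\colon S\to M$, Lemma~\ref{nonprojsurj} yields an injective $g'\colon S\to M$ with filtrable cokernel and $g\sim g'$, and the second part of Lemma~\ref{stableker} gives $\sigma\in\Aut_A(M)$ with $g=\sigma g'$; then $\sigma$ induces an isomorphism $\coker g'\iso\coker g$, so $\coker g$ is filtrable. (Alternatively this case can be obtained from the first by passing to $A^\opp$ and replacing the objects of $\CS$ by their $k$-duals, exactly as duality is used elsewhere in this section.)

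There is essentially no obstacle here: all the genuine work lies in Lemmas~\ref{stableker} and~\ref{nonprojsurj}. The only things to be careful about are matching the surjectivity (resp.\ injectivity) hypotheses so that Lemma~\ref{stableker} is applicable to the pair $f,g$ (resp.\ $g,g'$), and recording that pre-composing (resp.\ post-composing) with an automorphism of $M$ leaves the isomorphism type of the kernel (resp.\ cokernel) unchanged.
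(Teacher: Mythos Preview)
Your argument is correct and matches the paper's own approach exactly: the paper states the lemma immediately after the sentence ``From Lemmas~\ref{stableker} and~\ref{nonprojsurj}, we deduce,'' giving no further details, and your write-up simply spells out the evident deduction from those two lemmas.
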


From Lemmas \ref{nonzeromap} and \ref{nonprojsurj}, we deduce~:
\begin{lemma}
Let $M$ be filtrable non-projective. Then there is a
submodule $S$ of $M$, with $S\in\CS$, 
such that $M/S$ is filtrable and the inclusion $S\to M$ is not
projective. Similarly,
there is a filtrable submodule $N$ of $M$
such that $M/N\in\CS$ and $M\to M/N$ is not projective.
\end{lemma}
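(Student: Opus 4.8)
The plan is to deduce both halves directly from the preceding lemmas, the first half from Lemmas \ref{nonzeromap} and \ref{nonprojsurj} together with \ref{filtrationkercoker}, and the second half by duality. So suppose $M$ is filtrable and non-projective. By Lemma \ref{nonzeromap}, there is $S\in\CS$ with $\Hom_{A\mstab}(M,S)\neq 0$, that is, a non-projective map $f\colon M\to S$. By Lemma \ref{nonprojsurj}, after replacing $f$ by an equivalent map $g$ with $f\sim g$ we may assume $g\colon M\to S$ is surjective with filtrable kernel. Now $S$ is indecomposable and non-zero, and $g$ is surjective, so $g$ is either an isomorphism or a proper quotient map; in the first case $M\cong S\in\CS$ and $M$ itself has a trivial $\CS$-filtration, so set $N=0$ (note $M\to M/N=S$ is $g$, not projective). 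In the general case, $\ker g$ is a proper submodule, it is filtrable by Lemma \ref{nonprojsurj} (or \ref{filtrationkercoker}), and $M/\ker g\cong S\in\CS$; moreover $M\to M/\ker g$ is, up to the chosen isomorphism $M/\ker g\cong S$, precisely the map $g$, which is non-projective. So $N=\ker g$ works.

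For the first assertion one argues symmetrically. By Lemma \ref{nonzeromap} there is $S\in\CS$ with $\Hom_{A\mstab}(S,M)\neq 0$, i.e. a non-projective map $f\colon S\to M$. By Lemma \ref{nonprojsurj}, up to stable equivalence we may take $f$ injective with filtrable cokernel; identifying $S$ with its image, we get a submodule $S\subseteq M$, with $M/S$ filtrable by Lemma \ref{filtrationkercoker}, and the inclusion $S\hookrightarrow M$ is non-projective (again, if the inclusion were an isomorphism then $M\cong S\in\CS$ and we may just take $S=M$, the inclusion being the identity, which is non-projective since $M$ is non-projective). This handles both statements.

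The one point that needs a little care — and which I expect to be the only real friction — is the replacement of the abstract hypothesis "$\Hom_{A\mstab}(M,S)\neq0$" by an honest non-projective module map $M\to S$: this is just the definition of morphisms in the stable category (a non-zero class is represented by a non-projective $A$-module homomorphism), but one should state it so the reader sees that Lemma \ref{nonprojsurj} applies. Everything else is bookkeeping: checking that the map $M\to M/N$ really is the surjection produced by Lemma \ref{nonprojsurj} (so that its non-projectivity is inherited), and the trivial degenerate case $M\in\CS$. No new idea is required beyond assembling Lemmas \ref{nonzeromap}, \ref{nonprojsurj}, and \ref{filtrationkercoker}.

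\begin{proof}
We prove the second assertion; the first follows by the dual argument, using the second halves of Lemmas \ref{nonzeromap}, \ref{nonprojsurj} and \ref{filtrationkercoker}.

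Since $M$ is non-projective, Lemma \ref{nonzeromap} gives $S\in\CS$ with $\Hom_{A\mstab}(M,S)\neq 0$, so there is a non-projective map $f\colon M\to S$. By Lemma \ref{nonprojsurj}, there is $g\colon M\to S$ surjective with filtrable kernel and $f\sim g$; in particular $g$ is non-projective. Let $N=\ker g$. By Lemma \ref{filtrationkercoker}, $N$ is filtrable, and $M/N\cong S\in\CS$ via the isomorphism induced by $g$; under this isomorphism the projection $M\to M/N$ is identified with $g$, which is non-projective. This proves the claim. (If $g$ is an isomorphism, then $M\cong S\in\CS$ and one may instead take $N=0$, the projection $M\to M/N$ being $g$.)
\end{proof}
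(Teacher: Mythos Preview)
Your proof is correct and follows exactly the approach the paper indicates: the paper does not give a separate proof, merely writing ``From Lemmas \ref{nonzeromap} and \ref{nonprojsurj}, we deduce'' before the statement, and your argument is precisely the intended unpacking of that deduction. The invocation of Lemma \ref{filtrationkercoker} is harmless but redundant, since Lemma \ref{nonprojsurj} already guarantees the kernel (resp.\ cokernel) is filtrable.
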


\begin{prop}
\label{decomposition}
Let $M$ be an $A$-module with a decomposition $M\sim M'_1\oplus M'_2$
in the stable category. If $M$ is filtrable then there is
a decomposition $M=M_1\oplus M_2$ such that $M_i$ is filtrable and
$M_i\sim M'_i$
\end{prop}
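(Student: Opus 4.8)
The plan is to proceed by induction on the length of an $\CS$-filtration of $M$, the base case $M\in\add(\CS)$ being handled by the Krull--Schmidt property of $\add(\CS)$ (since $\Hom_{A\mstab}(S,T)=k^{\delta_{S,T}}$, the objects of $\CS$ are pairwise non-isomorphic indecomposables, so a stable decomposition of a sum of them lifts to an honest decomposition, up to adding projectives which can be absorbed since $M$ itself is fixed). For the inductive step, I would use the preceding lemma to choose $S\in\CS$ with a non-projective inclusion $\iota:S\hookrightarrow M$ such that $M/S$ is filtrable; composing with the stable projection $M\to M'_1$ (say), at least one component, say $S\to M'_1$, is non-projective, hence — since $\End_{A\mstab}(S)=k$ — is a split injection in $A\mstab$.

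The key step is then to realise this stable splitting by an actual submodule. Using Lemma \ref{stableker} (the injective version) applied to the map $S\to M$ — or rather by adjusting $\iota$ within its stable class — I would arrange that $S$ sits inside $M_1$ for some actual direct summand $M_1$ of $M$ with $M_1\sim M'_1$; more precisely, pick any honest decomposition $M=M_1^0\oplus M_2^0$ lifting $M'_1\oplus M'_2$ up to projectives (possible since $M$ is a fixed module and $A\mstab$ is the quotient by projectives, so idempotents lift), and use that $\iota:S\to M$ is stably equivalent to a map into $M_1^0$; by Lemma \ref{stableker} there is $\sigma\in\Aut_A(M)$, $\sigma\sim\id_M$, carrying the composite $S\to M_1^0\hookrightarrow M$ to $\iota$. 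Replacing the decomposition by its image under $\sigma^{-1}$, we may assume $\iota$ factors as $S\hookrightarrow M_1\hookrightarrow M$. Now $M/S \cong (M_1/S)\oplus M_2$, and $M/S$ is filtrable by Lemma \ref{filtrationkercoker}, with a shorter filtration; by induction there is a decomposition $M/S = K_1\oplus K_2$ with $K_i$ filtrable and $K_1\sim M_1/S$, $K_2\sim M_2$.

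The remaining task is to lift this decomposition of $M/S$ back through the extension $0\to S\to M\to M/S\to 0$ compatibly with the summand $M_1$ already containing $S$. Since $\Hom_{A\mstab}(S,K_2)$: here one checks that the class of the extension in $\Ext^1_A(M/S,S)$ restricts to a class in $\Ext^1_A(K_2,S)$ that is the image of a projective map, because $S\hookrightarrow M$ is already ``used up'' inside $M_1$; concretely, pulling back $0\to S\to M\to M/S\to 0$ along $K_2\hookrightarrow M/S$ gives an extension of $K_2$ by $S$ whose total space is a submodule $M_2'$ of $M$ mapping isomorphically (stably) to $M_2$, and one argues this extension splits as an honest sequence — or, if not, that $M_2'$ is still filtrable and a direct summand — using that $\Hom_{A\mstab}(K_2, \Omega^{-1}S)$ governs the obstruction and $K_2, S$ being filtrable with $S\in\CS$ constrains it. Dually, the pullback along $K_1\hookrightarrow M/S$ together with $S$ gives a filtrable submodule $M_1'$ containing $S$, and $M=M_1'\oplus M_2'$ is the desired decomposition, with $M_1'\sim M_1\sim M'_1$ and $M_2'\sim M_2\sim M'_2$; filtrability of each $M_i'$ follows from filtrability of $K_i$ and $S\in\add(\CS)$.

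The main obstacle I anticipate is precisely this last lifting: ensuring that the induced decomposition of $M/S$ can be lifted to a decomposition of $M$ as actual modules rather than only stably, i.e.\ controlling the relevant $\Ext^1$-obstruction. The clean way to handle it is to not split $M/S$ abstractly but to work throughout with submodules of $M$: choose the filtration of $M$ refining the inclusion $S\subset M_1$, apply the inductive decomposition inside the submodule lattice of $M$, and invoke Lemma \ref{stableker} one final time to correct the resulting honest decomposition of $M$ so that it matches the prescribed stable decomposition $M'_1\oplus M'_2$ on the nose. This keeps everything inside $A\mMod$ and avoids any genuine extension problem.
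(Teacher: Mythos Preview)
Your argument has a genuine structural gap. The step ``$S\to M'_1$ is non-projective, hence --- since $\End_{A\mstab}(S)=k$ --- is a split injection in $A\mstab$'' is simply false: a nonzero stable map out of a brick need not be split monic. More seriously, the next move, ``$\iota:S\to M$ is stably equivalent to a map into $M_1^0$'', requires the \emph{other} component $S\to M_2^0$ to be projective, and nothing you have said forces that; knowing one component is non-projective says nothing about the other. So you cannot conjugate the decomposition to absorb $S$ into one summand, and the whole inductive scheme collapses before you reach the $\Ext^1$-lifting problem you worry about at the end.

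The paper's proof avoids this by reversing the order of choices. It first fixes an honest decomposition $M=T_1\oplus T_2\oplus P$ (with $T_i\sim M'_i$, $P$ projective), then uses Lemma~\ref{nonzeromap} to pick a non-projective $\alpha:T_1\to S$ for some $S\in\CS$, and \emph{defines} the map $M\to S$ to be $\alpha\pi$ (projection onto $T_1$ followed by $\alpha$). Lemma~\ref{nonprojsurj} then replaces $\alpha\pi$ by a stably equal surjection $\beta:M\to S$ with filtrable kernel $N$; because $\beta\sim\alpha\pi$ already kills $T_2$, one gets $N\sim L\oplus T_2$ automatically, and induction (on $\dim M$, not on filtration length) splits $N=N_1\oplus N_2$ with $N_i$ filtrable. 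The final step---building $M_1$ as the extension of $S$ by $N_1$ and concluding $M\simeq M_1\oplus N_2$---works precisely because for $A$ self-injective $\Ext^1_A(S,N_2)\cong\Hom_{A\mstab}(S,\Omega^{-1}N_2)$, and the $N_2$-component of the connecting map $S\to N[1]$ is zero \emph{by construction} of $\beta$. Your approach could be salvaged by dualising this: choose $\alpha:S\to T_1$ first, then apply the injective half of Lemma~\ref{nonprojsurj} to $(\alpha,0):S\to M$. What does not work is choosing $\iota$ independently of the decomposition and hoping to align them afterwards.
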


\begin{proof}
We can assume $M$ is not projective, for otherwise the proposition is
trivial.
We prove the proposition by induction on the dimension of $M$.

Let $M=T_1\oplus T_2\oplus P$ with $P$ projective, $T_i$ without
non-zero projective direct summand and $T_i\sim M'_i$. 
Denote by $\pi:M\to T_1$ the projection.

By Lemma \ref{nonzeromap}, there is $S\in\CS$ such that
$\Hom_{A\mstab}(M,S)\not=0$. Hence, $\Hom_{A\mstab}(T_i,S)\not=0$ for
$i=1$ or $i=2$. Assume for instance $i=1$. Pick a non-projective map
$\alpha:T_1\to S$. So, $\alpha\pi:M\to S$ is not projective. By
Lemma \ref{nonprojsurj}, there is a surjective map $\beta:M\to S$ with
$\beta\sim \alpha\pi$ and $N=\ker\beta$ filtrable. Then
$N\sim L\oplus T_2$, where $L$ is the kernel of $\alpha+p:
T_1\oplus P_S\to S$ and $p:P_S\to S$ is a projective cover of $S$.
By induction, we have $N=N_1\oplus N_2$ with $N_i$ filtrable and
$N_1\sim L$, $N_2\sim T_2$. Now, the map $S\to L[1]$ gives a map
$S\to N_1[1]$ (in $A\mstab$).
Let $M_1$ be the extension of $S$ by $N_1$ corresponding
to that map. Then $M\simeq M_1\oplus N_2$, the modules $M_1$ and $N_2$ are
filtrable, $M_1\sim M'_1$, and $N_2\sim M'_2$.
\end{proof}

Let $M$ be a filtrable module. We say that $M$ {\em has no projective
remainder} if there is no direct sum decomposition $M=N\oplus P$ with
$P\neq0$ projective and $N$ filtrable.

\begin{lemma}
\label{caracfilt}
Let $M$ be a filtrable module with no projective remainder
and let $S\in\CS$.

For $f:M\to S$ surjective,
$\ker f$ is filtrable if and only if $f$ is non-projective.

For $f:S\to M$ injective,
$\coker f$ is filtrable if and only if $f$ is non-projective.
\end{lemma}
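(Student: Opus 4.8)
The plan is to get the ``if'' direction at no cost: for any filtrable $M$ a surjective non-projective map $f\colon M\to S$ has filtrable kernel by Lemma~\ref{filtrationkercoker} (and dually for an injective one), and the hypothesis that $M$ has no projective remainder plays no role there. The content is the ``only if'' direction, which I would prove in contrapositive form: if $f\colon M\to S$ is surjective and \emph{projective}, then $\ker f$ is not filtrable. First I would analyse such an $f$. Writing $p\colon P_S\to S$ for a projective cover, projectivity of $f$ gives $f=pg$ for some $g\colon M\to P_S$; since $f$ is onto, Nakayama's lemma forces $g$ onto, and as $P_S$ is projective $g$ splits, so $M\simeq M'\oplus P_S$ with $M'=\ker g$ and, under this identification, $\ker f\simeq M'\oplus\rad P_S=M'\oplus\Omega S$. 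Since $S\in\CS$ is non-projective (otherwise $\End_{A\mstab}(S)=0\neq k$) we have $P_S\neq 0$, so the decomposition $M\simeq M'\oplus P_S$ together with the hypothesis ``no projective remainder'' shows that $M'$ is not filtrable. The problem is thereby reduced to: show $M'\oplus\Omega S$ is not filtrable, knowing that $M'$ is not.

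I would then argue by contradiction, assuming $\ker f$ filtrable, using two observations. First, $\ker f$ itself has no projective remainder: a projective summand $R$ of $\ker f$ is injective (as $A$ is self-injective), so the inclusion $R\subseteq\ker f\subseteq M$ splits $M$ as $R\oplus M''$; reducing $0\to\ker f\to M\Rarr{f}S\to 0$ modulo $R$ gives $0\to N\to M''\to S\to 0$ with $N$ a filtrable complement to $R$ in $\ker f$, hence $M''$ filtrable, so $M=R\oplus M''$ would be a projective remainder of $M$ unless $R=0$. Second, since $f$ is zero in $A\mstab$, the triangle attached to that exact sequence is split there, so $\ker f\sim M\oplus\Omega S$; Proposition~\ref{decomposition} applied to this stable decomposition of the filtrable module $\ker f$ then yields an \emph{honest} decomposition $\ker f=X\oplus Y$ with $X,Y$ filtrable, $X\sim M$ and $Y\sim\Omega S$. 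Comparing non-projective parts in the two honest decompositions $\ker f\simeq M'\oplus\Omega S=X\oplus Y$, using $X\sim M\sim M'$ and that $\Omega S=\rad P_S$ has no projective summand (a projective summand of $\rad P_S$ would be an injective summand of $P_S$, hence all of $P_S$), one is driven to $Y\simeq\Omega S\oplus Q$ and $X\oplus Q\simeq M'$ with $Q$ projective, and then to $Q=0$, $X\simeq M'$ --- whence $M'$ is filtrable, the contradiction sought.

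The main obstacle is exactly that last step: passing from $X\sim M'$ (and $X$ filtrable, $X$ a summand of the filtrable module $\ker f$) to $X\simeq M'$. Proposition~\ref{decomposition} controls direct sum decompositions only up to stable equivalence and is therefore blind to projective summands, so the Krull--Schmidt bookkeeping that closes the argument --- in effect, ruling out the spurious projective summand $Q$, i.e.\ showing filtrability transfers along the stable equivalence here --- has to be done by hand; the cleanest route is presumably to establish first that a filtrable module with no projective remainder has \emph{no} projective direct summand at all, after which $f$ cannot be projective in the first place (as $P_S$ would be such a summand of $M$) and the lemma reduces to Lemma~\ref{filtrationkercoker}. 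Finally, the assertion about an injective $f\colon S\to M$ and $\coker f$ follows formally by duality: apply the surjective case to $A^\opp$ with $\CS$ replaced by the $k$-duals of its objects, exactly as in the proofs of Lemmas~\ref{nonprojsurj} and~\ref{filtrationkercoker}.
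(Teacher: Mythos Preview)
Your approach is essentially the paper's (which is extremely terse: it just says ``it follows from Proposition~\ref{decomposition} that $M$ has a non-zero projective submodule whose quotient is filtrable''), and the obstacle you flag is illusory. You correctly obtain $\ker f=X\oplus Y$ with $X,Y$ filtrable, $X\sim M\sim M'$, $Y\sim\Omega S$, and then by Krull--Schmidt you get $M'\simeq X\oplus Q$ with $Q$ projective. At this point you try to force $Q=0$ so as to conclude that $M'$ is filtrable and reach a contradiction with your earlier observation that $M'$ is \emph{not} filtrable. But you do not need $M'$ filtrable: from $M'\simeq X\oplus Q$ you get
\[
M\simeq M'\oplus P_S\simeq X\oplus(Q\oplus P_S),
\]
with $X$ filtrable and $Q\oplus P_S$ projective and non-zero (since $P_S\neq 0$). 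This is exactly a projective remainder of $M$, contradicting the hypothesis. So the Krull--Schmidt bookkeeping you set up already closes the argument; you were aiming at the wrong contradiction.

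Two minor remarks. Your ``first observation'' (that $\ker f$ has no projective remainder) plays no role: Proposition~\ref{decomposition} only needs $\ker f$ to be filtrable. And your proposed ``cleanest route''---proving directly that a filtrable module with no projective remainder has no projective summand---is true, but establishing it seems to require exactly the argument above; it is not an independent shortcut.
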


\begin{proof}
Assume $f$ is projective. Then there is a decomposition
$M=N\oplus P$ and $f=(0,g)$ with $P$ projective. Now,
$\ker f=N\oplus \ker g$. If $\ker f$ is filtrable, then it follows from
Lemma \ref{decomposition} that $M$ has a non-zero projective submodule whose
quotient is filtrable.

The converse is given by Lemma \ref{filtrationkercoker}.
The second part of the Lemma has a similar proof.
\end{proof}

\begin{lemma}
\label{dirfilt}
Let $M=M_0\oplus M_1$ with $M$ and $M_0$ filtrable and such that $M_0$ has no
projective remainder. Then $M_1$ is filtrable.
\end{lemma}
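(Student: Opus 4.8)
The plan is to argue by induction on $\dim_k M_0$. If $M_0=0$, then $M_1=M$ is filtrable and there is nothing to prove, so assume $M_0\neq 0$. Since $M_0$ is filtrable and has no projective remainder, $M_0$ is not projective: otherwise the decomposition $M_0=0\oplus M_0$, with $0$ filtrable and $M_0\neq 0$ projective, would show that $M_0$ has a projective remainder. By the second assertion of the lemma immediately preceding Proposition~\ref{decomposition}, applied to $M_0$, there is a filtrable submodule $N\subseteq M_0$ with $S:=M_0/N\in\CS$ such that the canonical surjection $\pi:M_0\to S$ is not projective; since $S\neq 0$ we have $\dim_k N<\dim_k M_0$.

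The first step is to check that $N$ again has no projective remainder; this is where self-injectivity of $A$ is used. Suppose $N=N'\oplus P$ with $N'$ filtrable and $P\neq 0$ projective. As $A$ is self-injective, $P$ is injective, so the inclusion $P\hookrightarrow M_0$ splits, say $M_0=P\oplus C$. Then $C\cong M_0/P$ contains the submodule $N/P\cong N'$, with quotient $(M_0/P)/(N/P)\cong M_0/N\cong S$; hence $C$ is an extension of $S\in\CS$ by the filtrable module $N'$, so $C$ is filtrable. But then $M_0=P\oplus C$ is a projective remainder of $M_0$, a contradiction.

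The second step is to realise $N\oplus M_1$ as the kernel of a non-projective surjection out of $M$. Let $q:M=M_0\oplus M_1\to S$ be the composite of the projection $M\to M_0$ with $\pi$. Then $q$ is surjective, $\ker q=N\oplus M_1$ (using $N\subseteq M_0$), and $q$ is not projective because its restriction to $M_0$ is $\pi$. Since $M$ is filtrable and $S\in\CS$, Lemma~\ref{filtrationkercoker} shows that $\ker q=N\oplus M_1$ is filtrable. Now $N$ is filtrable, has no projective remainder, and satisfies $\dim_k N<\dim_k M_0$, so the induction hypothesis applied to the decomposition $N\oplus M_1$ yields that $M_1$ is filtrable.

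I expect the first step --- transporting the ``no projective remainder'' property from $M_0$ to the submodule $N$ --- to be the main obstacle, since it is precisely what makes the induction close, and it is the only place where self-injectivity of $A$ is used; the remainder of the argument is a matter of assembling the short exact sequence $0\to N\oplus M_1\to M\to S\to 0$ and quoting Lemma~\ref{filtrationkercoker}.
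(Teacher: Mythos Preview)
Your argument is correct and follows the same inductive scheme as the paper: pass from $M_0$ to a filtrable submodule $N$ with $M_0/N\in\CS$, observe that the induced surjection $M\to S$ is non-projective, and apply Lemma~\ref{filtrationkercoker} to get $N\oplus M_1$ filtrable, then invoke the induction hypothesis. The paper's proof is actually terser than yours: it simply writes ``we are done'' after obtaining $\ker f'=\ker f\oplus M_1$ filtrable, without explicitly verifying that $\ker f$ has no projective remainder --- the argument you give for your ``first step'' is precisely the one that appears later as the proof of Lemma~\ref{projinfilt}(i), so your version fills a detail the paper leaves implicit.
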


\begin{proof}
We proceed by induction on $\dim M_0$ --- the result is clear for
$M_0=0$.
Assume $M_0\not=0$.
Let $f:M_0\to S$ be a surjection with $S\in\CS$ and $\ker f$ filtrable. By
Lemma \ref{caracfilt}, $f$ is not projective.
Then $f':M\Rarr{\can} M_0\Rarr{f} S$ is a non-projective surjection.
By Lemma \ref{filtrationkercoker}, $\ker f'$ is filtrable.
We have $\ker f'=\ker f\oplus M_1$ and we are done.
\end{proof}

\subsubsection{}
We now turn to filtrations by objects in $\add(\CS)$.

\begin{lemma}
\label{minimalsurj}
Let $M$ be a filtrable module and $N$ a filtrable submodule of $M$
such that $M/N\in\add\CS$. Then, $N$ is minimal with these
properties if and only if $N$ has no projective remainder and
the canonical map
$\Hom_{A\mstab}(M/N,S)\to \Hom_{A\mstab}(M,S)$ is surjective for every
$S\in\CS$.
\end{lemma}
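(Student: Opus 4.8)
The plan is to prove the two directions separately, in each case reducing to the characterizations of filtrability already established. First suppose $N$ is minimal with the stated properties. If $N$ had a projective remainder, say $N = N' \oplus P$ with $P \neq 0$ projective and $N'$ filtrable, then $M/N' $ would be an extension of $M/N \in \add\CS$ by $P$ projective; since $P$ is projective (hence injective, as $A$ is self-injective) the extension splits, so $M/N' \cong (M/N) \oplus P \in \add\CS$, and $N' \subsetneq N$ would contradict minimality. For the surjectivity of $\Hom_{A\mstab}(M/N,S) \to \Hom_{A\mstab}(M,S)$: given a nonzero $f \colon M \to S$ with $S \in \CS$ that does not factor through $M/N$ in the stable category, I would use Lemma \ref{nonprojsurj} to replace $f$ by a surjection $g \colon M \to S$ with $g \sim f$ and $\ker g$ filtrable, then restrict $g$ to $N$; the point is that $g|_N \colon N \to S$ is again non-projective (otherwise $g$ would factor through $M/N$ up to projectives, contradicting the choice of $f$), so by Lemma \ref{caracfilt} applied to $N$ — once we know $N$ has no projective remainder, which we may arrange — the kernel $\ker(g|_N)$ is a filtrable submodule of $N$ strictly smaller than $N$, and its quotient in $M$ is an extension of $M/N$ by $S$, still in $\add\CS$, contradicting minimality.

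Conversely, suppose $N$ has no projective remainder and $\Hom_{A\mstab}(M/N,S) \to \Hom_{A\mstab}(M,S)$ is surjective for all $S \in \CS$; I must show no proper filtrable submodule $N' \subsetneq N$ has $M/N' \in \add\CS$. Suppose such an $N'$ exists. Then $N/N'$ is a nonzero submodule of $M/N' \in \add\CS$, and I would pick an indecomposable summand of $M/N'$ meeting $N/N'$, giving a non-projective injection $S \to M/N'$ (non-projective because $\CS$ contains no projective module — or rather, any projective summand could be split off using self-injectivity and would contradict $N$ having no projective remainder) whose image lies inside $N/N'$. Dualizing via the surjection picture: there is a surjection $M/N' \to S$ that, composed with $M \to M/N'$, gives a non-projective surjection $M \to S$ killing $N$, hence factoring through $M/N$; this produces an element of $\Hom_{A\mstab}(M,S)$ not in the image of $\Hom_{A\mstab}(M/N,S)$, since any stable factorization through $M/N$ would force $N$ into the kernel only up to projectives — here is where I must be careful and instead argue directly that the composite $N \to M \to S$ is zero while the induced map $N/N' \to S$ is a split surjection onto a summand, contradicting surjectivity at the level of $\Hom_{A\mstab}(-,S)$ combined with $N$ having no projective remainder.

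The main obstacle I anticipate is the bookkeeping between honest module maps and stable maps throughout: Lemmas \ref{stableker}, \ref{nonprojsurj}, \ref{caracfilt} are all stated up to projective maps, so each time I pass from a stable statement (surjectivity of $\Hom_{A\mstab}$) to a module-theoretic construction (an actual submodule $\ker(g|_N)$ with a filtrable quotient) I must check that projective summands can be absorbed — and this is exactly why the hypothesis ``no projective remainder'' is indispensable, as it is the only thing preventing $N$ from acquiring a spurious projective direct summand when I modify $g$ by a projective map. Concretely, the delicate point is verifying that $g|_N$ is non-projective: this needs that $f$ was chosen \emph{not} to factor through $M/N$ even stably, and one must rule out that $f$ factors up to a projective map, which again uses that $\ker g$ is an honest submodule and Lemma \ref{caracfilt}. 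I expect the rest — the splitting arguments using injectivity of projectives, and the identification of $\add\CS$-extensions — to be routine given the self-injectivity of $A$ and Hypothesis \ref{hyp3}.
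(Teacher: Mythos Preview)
Your plan has genuine gaps in both directions.

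\smallskip
\textbf{Forward direction, no projective remainder.} You write that if $N=N'\oplus P$ with $P\neq 0$ projective and $N'$ filtrable, then $M/N'\cong (M/N)\oplus P\in\add\CS$, contradicting minimality. The splitting is correct (injectivity of $P$), but $(M/N)\oplus P$ is \emph{never} in $\add\CS$ when $P\neq 0$: every $S\in\CS$ satisfies $\Hom_{A\mstab}(S,S)=k\neq 0$, so no object of $\CS$ is projective, and hence $\add\CS$ contains no nonzero projective summand. You therefore have not produced a smaller filtrable submodule with quotient in $\add\CS$. The paper closes this gap by first invoking Lemma~\ref{dirfilt} to see that $P$ itself is filtrable, then peeling off a top $\CS$-layer $S$ of $P$: the preimage in $M$ of $M/N\oplus(\text{kernel of }P\to S)$ is filtrable with quotient $M/N\oplus S\in\add\CS$, contradicting minimality.

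\smallskip
\textbf{Forward direction, surjectivity.} Having made $g:M\to S$ surjective with filtrable kernel via Lemma~\ref{nonprojsurj}, you apply Lemma~\ref{caracfilt} to the restriction $g|_N$. But Lemma~\ref{caracfilt} is about \emph{surjective} maps, and nothing forces $g|_N$ to be surjective: its image could be a proper submodule of $S$. (If $g|_N$ \emph{were} surjective, your splitting claim would follow, since $(p,g):M\to M/N\oplus S$ would then be surjective with kernel $N\cap\ker g=\ker(g|_N)$; but you do not establish this.) The paper avoids this by applying Lemma~\ref{nonprojsurj} to $N$ rather than to $M$: from the non-projective $f'i:N\to S$ one gets a projective correction $q:N\to S$ with $f'i+q$ surjective and filtrable kernel $N'$; lifting $q$ to a projective $q':M\to S$ (possible since $i$ is injective) makes $(p,f'+q'):M\to M/N\oplus S$ surjective with kernel $N'$, so $M/N'\cong M/N\oplus S\in\add\CS$. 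Note too that this route does not rely on $N$ having no projective remainder, whereas your use of Lemma~\ref{caracfilt} would---and you have not correctly established that.

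\smallskip
\textbf{Converse direction.} Your argument is internally inconsistent: you construct a surjection $M\to S$ which you say ``kill[s] $N$, hence factor[s] through $M/N$'' and then assert it gives an element of $\Hom_{A\mstab}(M,S)$ \emph{not} in the image of $\Hom_{A\mstab}(M/N,S)$. A map factoring through $M/N$ is tautologically in that image. The paper argues in the opposite direction: if $N$ is not minimal one produces a surjection $f:N\to S$ with filtrable kernel whose associated extension of $M/N$ by $S$ splits; the splitting gives a lift $\tilde f:M\to S$ with $\tilde f|_N=f$. Since $N$ has no projective remainder, Lemma~\ref{caracfilt} makes $f$ non-projective, hence $\tilde f$ is non-projective; but the assumed surjectivity of $\Hom_{A\mstab}(M/N,S)\to\Hom_{A\mstab}(M,S)$ forces $\tilde f\sim hp$ for some $h$, whence $f=\tilde f|_N\sim 0$ is projective, a contradiction.
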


\begin{proof}
Let $N$ be a minimal filtrable submodule of $M$ such that
$M/N\in\add\CS$. Denote by $i:N\to M$ the injection and
$p:M\to M/N$ the quotient map.

Let $S\in\CS$.
Fix $f_1,\ldots,f_r:M/N\to S$ such that 
$\sum_i f_i:M/N\to S^r$ is surjective and $\ker\sum_i f_i$ has no direct
summand isomorphic to $S$.
Let $T$ be the subspace of $\Hom_{A\mstab}(M,S)$ generated by
$f_1p,\ldots,f_rp$. Assume this is a proper subspace, so there is
$f':M\to S$ whose image in $\Hom_{A\mstab}(M,S)$ is not in $T$.
Then $f'i:N\to S$ is not projective, hence there is a projective
map $q:N\to S$ such that $f'i+q$ is surjective and has
filtrable kernel $N'$ (Lemma \ref{nonprojsurj}). There
is $q':M\to S$ projective such that $q=q'i$. Now,
$M/N'\simeq M/N\oplus S$ and this contradicts the minimality of $N$.
It follows that the canonical map
$\Hom_{A\mstab}(M/N,S)\to \Hom_{A\mstab}(M,S)$ is surjective.
Assume $N=N'\oplus P$ with $N'$ filtrable with no projective remainder
and $P$ projective. By Lemma \ref{dirfilt}, $P$ is filtrable. We have 
$M/N'\simeq M/N\oplus P$. Since $M/N$ is a maximal quotient of $M$ in
$\add(\CS)$ and $P$ is filtrable, it follows that $P=0$.

Conversely, take $f:N\to S$ surjective with filtrable kernel such that
the extension of $M/N$ by $S$ splits. Then $f$ lifts to
$M\to S$ and it is not projective by Lemma \ref{caracfilt}. This contradicts
the surjectivity of $\Hom_{A\mstab}(M/N,S)\to \Hom_{A\mstab}(M,S)$.
Consequently, $N$ is minimal.
\end{proof}

\begin{lemma}
\label{filtifinj}
Let $M$ be a filtrable $A$-module with no projective remainder.

Let $f:M\to L$ be a surjection with $L\in\add\CS$.
Then $\ker f$ is filtrable if and only if
the canonical map $\Hom_{A\mstab}(L,S)\to\Hom_{A\mstab}(M,S)$ is
injective for all $S\in\CS$.
\end{lemma}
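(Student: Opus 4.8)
The plan is to prove both implications by inducting on a filtration of $M$, using Lemma~\ref{filtrationkercoker} and Lemma~\ref{caracfilt} as the basic tools, together with a dimension-count of $\Hom$-spaces in the stable category.

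First I would establish the ``only if'' direction. Suppose $\ker f$ is filtrable; write $K=\ker f$ and consider the exact sequence $0\to K\to M\to L\to 0$, giving a distinguished triangle $K\to M\to L\to K[1]$ in $A\mstab$. Applying $\Hom_{A\mstab}(-,S)$ yields an exact sequence $\Hom_{A\mstab}(K[1],S)\to\Hom_{A\mstab}(L,S)\to\Hom_{A\mstab}(M,S)$. So injectivity of $\Hom_{A\mstab}(L,S)\to\Hom_{A\mstab}(M,S)$ is equivalent to the vanishing of the connecting map $\Hom_{A\mstab}(K[1],S)\to\Hom_{A\mstab}(L,S)$, \ie, that every map $L\to K[1]$ factoring the triangle composes to zero with $K[1]\to S$; equivalently the class of the extension pairs trivially with every $K\to S$. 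The cleanest route is a dimension count: since $M$, $K$, $L$ are all filtrable, one computes $\dim\Hom_{A\mstab}(M,S)$ by adding the contributions of successive $\CS$-layers (using Hypothesis~\ref{hyp3}), and compares with $\dim\Hom_{A\mstab}(K,S)+\dim\Hom_{A\mstab}(L,S)$. Because $M$ has no projective remainder, the surjection $f$ is forced to behave minimally on each layer, and the long exact sequence degenerates into short exact pieces $0\to\Hom_{A\mstab}(L,S)\to\Hom_{A\mstab}(M,S)\to\Hom_{A\mstab}(K,S)\to 0$, which gives the injectivity.

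For the ``if'' direction, assume $\Hom_{A\mstab}(L,S)\to\Hom_{A\mstab}(M,S)$ is injective for all $S\in\CS$. I would argue by induction on the number of indecomposable summands of $L$ in $\add\CS$. If $L=0$ then $\ker f=M$ is filtrable by hypothesis. Otherwise write $L=L'\oplus S_0$ with $S_0\in\CS$, and let $f'=\pi f:M\to S_0$ be the composite with the projection. The injectivity hypothesis restricted to $S_0$ shows in particular that $f'$ is non-projective (if $f'$ were projective, the corresponding coordinate of $L\to M$ in stable $\Hom$ would witness a nonzero kernel of $\Hom_{A\mstab}(L,S_0)\to\Hom_{A\mstab}(M,S_0)$, via the argument of Lemma~\ref{caracfilt}). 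By Lemma~\ref{filtrationkercoker}, $\ker f'$ is filtrable; one checks it again has no projective remainder (using Lemma~\ref{dirfilt} and Lemma~\ref{caracfilt} as in Lemma~\ref{minimalsurj}). The map $f$ restricts to a surjection $\ker f'\to L'$ whose kernel is still $\ker f$, and the injectivity hypothesis descends to $\Hom_{A\mstab}(L',S)\to\Hom_{A\mstab}(\ker f',S)$ because of the compatible triangles; applying the inductive hypothesis finishes the argument.

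The main obstacle I anticipate is the bookkeeping around projectivity and the ``no projective remainder'' condition when passing from $M$ to $\ker f'$: one must verify that the induction hypothesis genuinely applies, \ie, that $\ker f'$ inherits the no-projective-remainder property, and that the restriction of $f$ to $\ker f'$ remains surjective onto $L'$ with the right kernel. This is exactly the kind of delicate argument already carried out in the proof of Lemma~\ref{minimalsurj}, and I expect to reuse Lemma~\ref{caracfilt} and Lemma~\ref{dirfilt} repeatedly here. A secondary subtlety is making the dimension count in the ``only if'' direction rigorous: one should phrase it as the statement that for a filtrable module $X$ with no projective remainder, $\dim\Hom_{A\mstab}(X,S)$ equals the multiplicity of $S$ in a ``top layer'' that can be read off from any minimal $\add\CS$-quotient (cf.\ Lemma~\ref{minimalsurj}), and then verify additivity along the short exact sequence $0\to\ker f\to M\to L\to 0$.
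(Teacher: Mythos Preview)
Your ``only if'' direction has a genuine gap. The claim that $\dim\Hom_{A\mstab}(M,S)$ can be computed ``by adding the contributions of successive $\CS$-layers'' is false: the stable category is triangulated, not abelian, and stable $\Hom$ is not additive along $\CS$-filtrations (e.g.\ a non-split extension of $S$ by $S$ need not have $2$-dimensional stable $\Hom$ into $S$). Your fallback (reading $\dim\Hom_{A\mstab}(X,S)$ off a minimal $\add\CS$-quotient, as in Proposition~\ref{head}) is circular, since that proposition is proved \emph{using} Lemma~\ref{filtifinj}. The paper's argument here is much shorter and avoids all of this: it first reformulates injectivity as ``for every surjection $p:L\to S$ with $S\in\CS$, the composite $pf$ is not projective''; then if $\ker f$ is filtrable, $\ker(pf)$ is an extension of $\ker p\in\add\CS$ by $\ker f$, hence filtrable, and Lemma~\ref{caracfilt} (this is where ``no projective remainder'' is used) forces $pf$ to be non-projective.

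Your ``if'' direction is in the right spirit---peel off one $S$ from $L$ and induct---but your execution runs into the very obstacle you flag: there is no reason $\ker f'$ should inherit the no-projective-remainder property, and you give no argument for it. The paper sidesteps this entirely by carrying the \emph{reformulated} hypothesis (non-projectivity of all composites $pf$) through the induction; that version makes no reference to projective remainders, so one never has to check it for $M'=\ker(pf)$. The substantive step you are missing is the verification that the reformulated hypothesis is inherited by $f':M'\to L'$: given a surjection $p':L'\to T$, one must show $p'f'$ is non-projective. The paper does this by choosing a splitting $\sigma:L\to L'$ and comparing $p'\sigma f$ with $pf$ in $\Hom_{A\mstab}(M,S)$ and $\Hom_{A\mstab}(M,T)$, treating the cases $T\ne S$ and $T=S$ separately (in the latter, one shows $pf$ and $p'\sigma f$ are linearly independent). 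Your phrase ``the injectivity hypothesis descends \ldots\ because of the compatible triangles'' hides exactly this argument.
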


\begin{proof}
Note that the canonical map $\Hom_{A\mstab}(L,S)\to\Hom_{A\mstab}(M,S)$ is
injective if and only if, given $p:L\to S$ surjective
with $S\in\CS$, $pf$ is not projective.

Assume $\ker f$ is filtrable. Let $p:L\to S$ be a surjective map
with $S\in\CS$. Then $\ker pf$ is filtrable, hence
$pf$ is not projective (Lemma \ref{caracfilt}).

Let us now prove the converse by induction on the dimension of $M$.
Assume that given $p:L\to S$ surjective
with $S\in\CS$, then $pf$ is not projective.
Pick $p:L\to S$ surjective and let $L'=\ker p$. Let
$M'=\ker pf$. Then $f$ induces a surjection $f':M'\to L'$ and we have
$L'\in\add\CS$ (since $p$ is split). Let $p':L'\to T$ be a surjective
map with $T\in\CS$. Fix a left inverse $\sigma:L\to L'$ to the inclusion
$L'\to L$.
$$\xymatrix{
& & 0\ar[d] & 0\ar[d] & T \\
0 \ar[r] & \ker f' \ar@{=}[d] \ar[r] & M'\ar[r]^{f'}\ar[d] &
 L'\ar[d] \ar@{>>}[ur]^{p'} \ar[r] & 0 \\
0 \ar[r] & \ker f  \ar[r] & M\ar[r]_{f}\ar[d] &
 L\ar[d]^p \ar@{.>}@/_/[u]_{\sigma} \ar[r] & 0 \\
&&S\ar@{=}[r]\ar[d] & S\ar[d] \\
&&0&0
}$$

If $S\not=T$, then $\Hom_{A\mstab}(S,T)=0$, and hence
$p'\sigma f$ doesn't factor through $S$ in the stable category.
On the other hand, if $S=T$ then
$pf$ and $p'\sigma f$ define linearly independent elements
of $\Hom_{A\mstab}(M,S)$. Consequently, $p'\sigma f$ doesn't factor
through $S$ in the stable category.
It follows that $p'f'$ is not projective. By Lemma \ref{filtrationkercoker},
$M'$ is filtrable. By induction, it follows that $\ker f'$ is filtrable
and we are done.
\end{proof}

\begin{prop}
\label{head}
Let $M$ be a filtrable $A$-module with no projective remainder.

Let $N$ be a minimal filtrable submodule of $M$ such that
$M/N\in\add\CS$. Then there is an isomorphism
$$M/N\iso \bigoplus_{S\in\CS}S\otimes \Hom_{A\mstab}(M,S)$$
that induces the canonical map
$M\to \bigoplus_{S\in\CS}S\otimes \Hom_{A\mstab}(M,S)$
in the stable category.

Given $\tau\in\Aut(N)$ such that $\tau\sim\id_N$, there is
$\sigma\in\Aut(M)$ with $\sigma\sim\id_M$ and $\sigma_{|N}=\tau$.

Let $N'$ be a minimal filtrable submodule of $M$ such that
$M/N'\in\add\CS$. Then there is $\sigma\in\Aut(M)$ such that
$N'=\sigma(N)$ and $\sigma\sim \id_M$.
\end{prop}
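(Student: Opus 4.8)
The plan is to establish the three assertions in order, using the structural results on minimal filtrable submodules that have just been assembled. First I would prove the isomorphism $M/N\iso\bigoplus_{S\in\CS}S\otimes\Hom_{A\mstab}(M,S)$. Since $N$ is minimal, Lemma \ref{minimalsurj} tells us the canonical map $\Hom_{A\mstab}(M/N,S)\to\Hom_{A\mstab}(M,S)$ is surjective for every $S\in\CS$, and that $N$ has no projective remainder. Because $M/N\in\add\CS$ and $\Hom_{A\mstab}(S,T)=k^{\delta_{S,T}}$, we may write $M/N\iso\bigoplus_S S\otimes V_S$ for some vector spaces $V_S$, and the quotient map $p:M\to M/N$ composed with the projection to the $S$-isotypic part gives a map $V_S^*\to\Hom_{A\mstab}(M,S)$ (equivalently, a linear map $V_S\to\Hom_{A\mstab}(M,S)^*$). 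Surjectivity of the canonical map gives that this induced map is onto; I then need that it is also injective, which is where Lemma \ref{filtifinj} enters: if some combination of the projections $M/N\to S$ were stably zero on $M$, i.e. the canonical map $\Hom_{A\mstab}(M/N,S)\to\Hom_{A\mstab}(M,S)$ failed to be injective, then one could shrink $N$ further. Concretely, replacing $M/N$ by the image of $M$ in $\bigoplus_S S\otimes\Hom_{A\mstab}(M,S)$ and invoking Lemma \ref{filtifinj} shows the kernel stays filtrable, contradicting minimality unless the map on $\Hom$-spaces is already bijective. This forces $V_S\cong\Hom_{A\mstab}(M,S)$ compatibly, which is the claimed isomorphism, and by construction it intertwines $p$ with the canonical map in $A\mstab$.

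Next, for the lifting of automorphisms: given $\tau\in\Aut(N)$ with $\tau\sim\id_N$, I want $\sigma\in\Aut(M)$ with $\sigma\sim\id_M$ and $\sigma_{|N}=\tau$. The exact sequence $0\to N\to M\to M/N\to 0$ represents a class $\zeta\in\Ext^1_A(M/N,N)$, and $\tau_*\zeta$ classifies the pushout extension. Since $\tau\sim\id_N$, the maps $N\to M$ and $\tau$ followed by $N\to M$ differ by a projective map; using that $M/N\in\add\CS$ and arguing as in the proof of Lemma \ref{stableker} (which handles exactly this situation — comparing two injections that agree stably), one gets a commutative diagram with identity on $M/N$ on the right, identity-stably on $M$ in the middle, and $\tau$ on the left. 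That middle map is the desired $\sigma$. The key input is that $\tau_*\zeta=\zeta$ in $\Ext^1_A(M/N,N)$ up to the ambiguity absorbed by projective summands, which follows because $\tau-\id_N$ being projective means it factors through a projective, and $\Ext^1_A(M/N,-)$ kills the relevant piece after adjusting $M$ by projectives; this is essentially the content of the diagram chase in Lemma \ref{stableker} transported to the present setting.

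Finally, for the transitivity statement: let $N'$ be another minimal filtrable submodule with $M/N'\in\add\CS$. By the first part applied to both $N$ and $N'$, there are isomorphisms $M/N\iso\bigoplus_S S\otimes\Hom_{A\mstab}(M,S)\iso M/N'$ both inducing the canonical map from $M$ in the stable category. Hence the composite $M\to M/N$ and $M\to M/N'$ agree in $A\mstab$ up to an automorphism of $\bigoplus_S S\otimes\Hom_{A\mstab}(M,S)$; lifting that automorphism (it is an automorphism of an object of $\add\CS$, hence genuinely realized) we may assume $p':M\to M/N'$ and $p:M\to M/N$ are stably equal after identifying targets. Then $p$ and $p'$ are surjections to the same module $M/N\in\add\CS$ with $p\sim p'$, so by (the appropriate form of) Lemma \ref{stableker} there is $\rho\in\Aut(M/N)$, or rather the cokernel-side argument gives $\sigma\in\Aut(M)$ with $\sigma\sim\id_M$ and $p'\sigma=p$, whence $\sigma(N)=\ker(p\sigma^{-1})=\ker p'=N'$, possibly after replacing $\sigma$ by $\sigma^{-1}$. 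The main obstacle throughout is the bookkeeping of projective summands when comparing the exact sequences — matching $N$ with $N'$ and lifting stable isomorphisms to honest module maps — but each such step is precisely what Lemmas \ref{stableker}, \ref{caracfilt}, \ref{dirfilt}, and \ref{minimalsurj} were set up to handle, so the argument is a careful assembly rather than a new idea.
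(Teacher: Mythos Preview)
Your proposal is essentially correct and follows the paper's route for the first and third parts: both you and the paper obtain the identification $M/N\simeq\bigoplus_S S\otimes\Hom_{A\mstab}(M,S)$ from Lemmas~\ref{minimalsurj} and~\ref{filtifinj}, and both deduce the conjugacy of $N$ and $N'$ by identifying $M/N$ with $M/N'$ via the first part and then applying Lemma~\ref{stableker} to the two stably equal surjections.

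Two small points. For the first part, injectivity of $\Hom_{A\mstab}(M/N,S)\to\Hom_{A\mstab}(M,S)$ follows \emph{directly} from Lemma~\ref{filtifinj}: $M$ has no projective remainder by hypothesis and $\ker(M\to M/N)=N$ is filtrable, so the lemma applies as stated. Your contradiction argument about ``shrinking $N$'' is unnecessary and, as phrased, points the wrong way---a failure of injectivity would let you \emph{enlarge} $N$ inside $M$, not shrink it. For the second part, the paper is more explicit than your $\Ext^1$ discussion: write $\tau=\id_N+p$ with $p$ projective, extend $p$ (using that the projective it factors through is injective, since $A$ is self-injective) to a projective map $q:M\to N$ with $qi=p$, and set $\sigma=\id_M+iq$; then $\sigma\sim\id_M$ and $\sigma_{|N}=\tau$ are immediate, and the five lemma gives that $\sigma$ is an automorphism. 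Your alternative---apply Lemma~\ref{stableker} directly to the two injections $i,\,i\tau:N\to M$, which are stably equal since $i\tau-i=ip$ is projective---also works cleanly and renders the $\Ext^1$ detour superfluous.
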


\begin{proof}
The first part of the proposition follows from Lemmas \ref{minimalsurj} and
\ref{filtifinj}.

\smallskip
Let $\tau\in\Aut(N)$ such that $\tau=\id_N+p$ with $p:N\to N$
projective. Then there is a projective map $q:M\to N$ with $p=qi$.
Let $\sigma=\id_M+q$. Then $\sigma_{|N}=\tau$. Now, 
we have a commutative diagram
$$\xymatrix{
0\ar[r] & N\ar[r]\ar[d]_\tau^\sim & M\ar[r]\ar@{.>}[d]_\sigma &
 M/N\ar[d]_{\id}\ar[r] & 0 \\
0\ar[r] & N\ar[r] & M\ar[r] & M/N\ar[r] & 0
}$$
and hence $\sigma$ is an automorphism of $M$.

\smallskip
Let $N'$ be a minimal filtrable submodule of $M$ such that
$M/N'\in\add\CS$. Then we have shown that $M/N\iso M/N'$ and
that via such an isomorphism, the maps
$M\to M/N$ and $M\to M/N'$ are stably equal. Now, Lemma \ref{stableker}
shows there is $\sigma\in\Aut(M)$ with $N'=\sigma(N)$ and $\sigma\sim \id_M$.
\end{proof}

Let $M$ be filtrable. An {\em $\CS$-radical filtration} of
$M$ is a filtration
$0=M_r\subseteq M_{r-1}\subseteq\cdots\subseteq M_0=M$ such
that $M_i$ is a minimal filtrable submodule of $M_{i-1}$ with
$M_{i-1}/M_i\in\add\CS$.

\begin{prop}
\label{series}
Let $M$ be a filtrable $A$-module with no projective remainder.
Let $0=M_r\subseteq M_{r-1}\subseteq\cdots\subseteq M_0=M$
and $0=M'_{r'}\subseteq M'_{r'-1}\subseteq\cdots\subseteq M'_0=M$ be two
$\CS$-radical filtrations of $M$. Then, $r=r'$ and
there is an
automorphism of $M$ that swaps the two filtrations and that is stably
the identity.
\end{prop}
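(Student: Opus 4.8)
The plan is to prove this by induction on $\dim M$, using Proposition \ref{head} as the main tool to compare the bottom steps $M_{r-1}$ and $M'_{r'-1}$ of the two filtrations. First I would dispose of the trivial case $M=0$. Assuming $M\neq 0$ (so, being filtrable with no projective remainder, $M$ is non-projective), both $M_{r-1}$ and $M'_{r'-1}$ are minimal filtrable submodules of $M$ with semisimple-in-$\add\CS$ quotient. By the last assertion of Proposition \ref{head}, there is $\sigma_0\in\Aut(M)$ with $\sigma_0\sim\id_M$ and $\sigma_0(M_{r-1})=M'_{r'-1}$. Replacing the first filtration by its image under $\sigma_0$ (which changes nothing up to a stably-trivial automorphism of $M$, and preserves the property of being an $\CS$-radical filtration since $\sigma_0$ is an automorphism), I may assume $M_{r-1}=M'_{r'-1}=:M_1$.

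Next I would like to apply the induction hypothesis to $M_1$. For that I need two things: that $M_1$ is filtrable with no projective remainder, and that $0=M_r\subseteq\cdots\subseteq M_1$ and $0=M'_{r'}\subseteq\cdots\subseteq M'_1$ are $\CS$-radical filtrations of $M_1$. The latter is immediate from the definition. For the former, $M_1$ is filtrable by construction; and it has no projective remainder because if $M_1=N\oplus P$ with $P\neq0$ projective and $N$ filtrable, then $M/N$ would be an extension of $M/M_1\in\add\CS$ by $P$, hence (as $P$ is injective, $A$ being self-injective) $M/N\simeq M/M_1\oplus P\in\add\CS$ modulo... — here I need to be a little careful: the cleanest argument is that $P$, being projective-injective, splits off, so $M\simeq N\oplus P'$ for some module $P'$ with $P'$ containing $P$ as a summand; more directly, since $M_1$ is a minimal filtrable submodule of $M$ with $M/M_1\in\add\CS$, Lemma \ref{minimalsurj} already tells us $M_1$ has no projective remainder. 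So I can simply cite Lemma \ref{minimalsurj}. Since $\dim M_1<\dim M$, induction gives $r-1=r'-1$ (so $r=r'$) and an automorphism $\tau\in\Aut(M_1)$ with $\tau\sim\id_{M_1}$ swapping the two filtrations of $M_1$.

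Finally I would lift $\tau$ to $M$: by the second assertion of Proposition \ref{head} (applied with the minimal filtrable submodule $M_1$ of $M$), there is $\sigma_1\in\Aut(M)$ with $\sigma_1\sim\id_M$ and $\sigma_{1|M_1}=\tau$. Then $\sigma_1$ sends $M_i$ to $M'_i$ for $i\ge 1$ (using $\sigma_{1|M_1}=\tau$ and the fact that $\tau$ swaps the filtrations), and $\sigma_1(M_0)=M_0=M'_0$ trivially. Composing, $\sigma:=\sigma_1\sigma_0$ (or rather $\sigma_1$ followed by the earlier reduction) is the desired automorphism of $M$, stably the identity, swapping the two original filtrations. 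The main obstacle is the bookkeeping in the reduction step — making sure that conjugating one filtration by the stably-trivial automorphism $\sigma_0$ genuinely reduces to the case of a common bottom term without losing the "$\CS$-radical" property or the "stably the identity" bookkeeping — but each ingredient needed is exactly one of the three assertions of Proposition \ref{head}, so no new idea is required beyond carefully assembling them.
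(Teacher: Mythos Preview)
Your argument is correct and is essentially the paper's proof: induct on $\dim M$, use the third assertion of Proposition~\ref{head} to align the first radical layers, apply the inductive hypothesis to that common submodule (which has no projective remainder by Lemma~\ref{minimalsurj}, as you note), and then use the second assertion of Proposition~\ref{head} to lift the resulting automorphism back to $M$. One indexing slip to fix: in your first paragraph you write $M_{r-1}$ and $M'_{r'-1}$ where you mean $M_1$ and $M'_1$ --- the minimal filtrable submodule of $M$ with quotient in $\add\CS$ is $M_1$ (one step below $M_0=M$), not $M_{r-1}$ (one step above $M_r=0$); from your second paragraph on the indices are right.
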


\begin{proof}
We prove this lemma by induction on the dimension of $M$.
By Proposition \ref{head}, there is $\sigma\in\Aut(M)$ such that
$\sigma(M'_1)=M_1$ and $\sigma\sim\id_M$. Now, by induction, we have
$r=r'$ and
there is $\tau\in\Aut(M_1)$ such that $\tau\sigma(M'_i)=M_i$
for $i>0$ and $\tau\sim \id_{M_1}$.
By Proposition \ref{head}, there is
$\tau'\in\Aut(M)$ such that $\tau'_{|M_1}=\tau$ and
$\tau'\sim\id_M$. Now, $\tau'\sigma$ sends $M'_i$ onto $M_i$.
\end{proof}

\begin{rem}
A filtrable projective module can have two $\CS$-radical filtrations with 
non-isomorphic layers.

Consider $A=k\GA_4$, the group algebra of
the alternating group of degree $4$ and assume $k$ has characteristic $2$ and
contains a cubic root of $1$. Let $B$ be the principal block of $k\GA_5$.
Then, the restriction functor is a stable equivalence between $B$ and $A$.
Let $\CS$ be the set of images of the simple $B$-modules.
Denote by $k$ the trivial $A$-module and by $k_+$, $k_-$ the non-trivial
simple $A$-modules. Then $\CS=\{k,S_+,S_-\}$ where $S_\eps$ is a non-trivial
extension of $\eps$ by $-\eps$. Let $P$ and $P'$ be the two projective
indecomposable $B$-modules that don't have $k$ as a quotient. Then
$\Res_{\GA_4}P\simeq \Res_{\GA_4}P'$. This projective module has
two $\CS$-radical filtrations with non-isomorphic layers~:
one coming from the radical filtration of
$P$ and one coming from the radical filtration of $P'$.
\end{rem}

While $\CS$-radical filtrations are not unique in general
for filtrable modules with a projective remainder, there are some cases
where uniqueness still holds~:
\begin{prop}
Assume $A$ is a symmetric algebra.
Let $0\to S\to M\to T\to 0$ and
$0\to S'\to M\to T'\to 0$ be two exact sequences with
$S,S',T,T'\in\CS$. Assume that the sequences don't both split.
Then there is an automorphism of $M$ swapping the two exact sequences.
\end{prop}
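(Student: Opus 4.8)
The plan is to reduce everything to the non-projective case and then use the uniqueness machinery already built, but with a twist: here $M$ need not have "no projective remainder," so Proposition \ref{series} does not apply directly. First I would dispose of the degenerate cases. If both sequences split, there is nothing to prove (they are the same data up to automorphism). If exactly one splits, say $0\to S\to M\to T\to 0$ splits and $0\to S'\to M\to T'\to 0$ does not, then $M\simeq S\oplus T$ is filtrable; since $A$ is symmetric, I would like to argue that a filtrable module with a genuinely non-split $\CS$-extension cannot also be a direct sum of two members of $\CS$ — this should follow because $\Hom_{A\mstab}(S',M)$ and $\Hom_{A\mstab}(S',S\oplus T)$ have incompatible dimensions (using $\Hom_{A\mstab}(S,T)=k^{\delta_{S,T}}$ and the long exact sequence), giving a contradiction; hence this mixed case does not occur.

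So assume neither sequence splits. Then $M$ is not projective (a projective module is injective since $A$ is symmetric, so any surjection onto $T\in\CS$ with $T$ non-projective — and members of $\CS$ are non-projective by Hypothesis \ref{hyp3} together with the indecomposability and the $\delta$-condition — would split). I would next show $M$ has no projective remainder: if $M=N\oplus P$ with $P\neq 0$ projective and $N$ filtrable, then since $A$ is symmetric, $P$ is injective, so the composite $P\hookrightarrow M\twoheadrightarrow T$ would have to be... the cleaner route is to observe that $S\hookrightarrow M$ is non-projective (else the sequence splits, as $P$-projective injections split off) and apply Lemma \ref{caracfilt}-style reasoning after first passing to the summand without projective remainder via Lemma \ref{dirfilt} and Proposition \ref{decomposition}; the key point is that both $S\hookrightarrow M$ and $S'\hookrightarrow M$ land in the summand with no projective remainder, and a projective summand of $M$ would have to be a summand of $T$ (via the quotient map), forcing $P=0$ since $T\in\CS$ is non-projective indecomposable. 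Granting this, $M$ is filtrable with no projective remainder.

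Now both $0=M_2\subseteq M_1=S\subseteq M_0=M$ (with $M/S\simeq T\in\add\CS$, and $S$ a filtrable submodule with filtrable quotient in $\add\CS$) — but I must check $S$ is \emph{minimal}. By Lemma \ref{minimalsurj}, $S$ is minimal iff $S$ has no projective remainder (true, $S\in\CS$ is non-projective indecomposable) and $\Hom_{A\mstab}(M/S,R)\to\Hom_{A\mstab}(M,R)$ is surjective for every $R\in\CS$; the latter holds because the obstruction to lifting lies in $\Hom_{A\mstab}(S,R[1])=\Hom_{A\mstab}(S,\Omega^{-1}R)$, and since the extension is non-split this connecting map is injective on $\Hom_{A\mstab}(M/S, \Omega^{-1}R)$... more carefully, one runs the long exact sequence $\Hom_{A\mstab}(T,R)\to\Hom_{A\mstab}(M,R)\to\Hom_{A\mstab}(S,R)\Rarr{\partial}\Hom_{A\mstab}(T,R[1])$ and notes $\partial$ sends $\id_S$ (when $R=S$) to the class of the extension, which is nonzero, so $\partial$ is injective and the first map is onto. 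Thus both filtrations are $\CS$-radical filtrations of the same filtrable module with no projective remainder. Proposition \ref{series} then yields an automorphism $\sigma$ of $M$, stably the identity, with $\sigma(S')=S$; since $\sigma$ is compatible with the quotient maps it swaps the two exact sequences, and we are done.

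The main obstacle is the step I flagged: controlling projective remainders. Unlike in Proposition \ref{series}, the hypothesis here does not assume $M$ has no projective remainder, and a priori $M$ could be projective or have a projective summand hiding a second, spurious $\CS$-radical filtration (exactly the phenomenon in the Remark about $k\GA_4$). The resolution is to use symmetry of $A$ decisively — it forces projective = injective — so that any would-be projective summand splits off through the quotient onto $T$, which is impossible for $T\in\CS$; and to use the non-splitting hypothesis to exclude the case $M\simeq S\oplus T$. Once projectivity is ruled out, the argument is a clean application of Lemmas \ref{minimalsurj} and \ref{dirfilt} and Proposition \ref{series}.
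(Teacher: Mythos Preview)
Your argument has a genuine gap at the step where you claim $M$ cannot be projective. You write that ``a projective module is injective since $A$ is symmetric, so any surjection onto $T\in\CS$ with $T$ non-projective \ldots\ would split.'' This inference is false: injectivity of $M$ says that every monomorphism $M\hookrightarrow X$ splits, not that epimorphisms out of $M$ split. A projective cover $P_T\twoheadrightarrow T$ is a surjection from a projective (and injective) module that does \emph{not} split. Concretely, for $A=k[x]/(x^2)$ with $\CS=\{k\}$, the sequence $0\to k\to A\to k\to 0$ has $M=A$ projective and does not split; this is exactly an instance of the proposition with $M$ projective.

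The paper handles the projective case by a completely different argument, and this is where the symmetry hypothesis is really used. If $M$ is projective then $T\simeq\Omega^{-1}S$ in $A\mstab$, and Tate duality for symmetric algebras gives a non-projective map $\Omega^{-1}S\to S$; by Hypothesis~\ref{hyp3} this forces $S=T$, and similarly $S'=T'$. Applying $\Hom(S',-)$ to both exact sequences yields two computations of $\dim\Hom(S',M)$; from the second (with $S'=T'$ and $\dim\Ext^1(S',S')=1$) this dimension is odd, and feeding that into the first forces $\Ext^1(S',S)\neq 0$, hence $\Hom_{A\mstab}(S',S)\neq 0$, hence $S'=S$. Lemma~\ref{stableker} then produces the required automorphism.

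For the non-projective case the paper simply invokes Proposition~\ref{head}; your longer route through Lemma~\ref{minimalsurj} and Proposition~\ref{series} is in the same spirit but more elaborate than necessary. Your attempted treatment of projective remainders (``a projective summand of $M$ would have to be a summand of $T$'') is also not justified as stated, but this becomes moot once the projective case is isolated and dealt with separately.
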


\begin{proof}
If $M$ is non-projective, then this is a consequence of Proposition \ref{head}.

Assume $M$ is projective.
Since $A$ is symmetric, we have a non-projective map
$T\simeq \Omega^{-1}S\to S$. It follows that $S=T$.
Similarly, $T'=S'$. We have exact sequences
$$0\to \Hom(S',S)\to \Hom(S',M)\to \Hom(S',S)\to \Ext^1(S',S)\to 0$$
$$0\to \Hom(S',S')\to \Hom(S',M)\to \Hom(S',S')\to \Ext^1(S',S')\to 0$$
We have $\Omega^{-1}S'\simeq S'$, and hence $\dim\Ext^1(S',S')=1$.
Consequently, $\dim \Hom(S',M)$ is an odd integer. It follows that
$\Ext^1(S',S)\not=0$, hence $\Hom_{A\mstab}(S',S)\not=0$, so $S'=S$
and we are done by Lemma \ref{stableker}.
\end{proof}

\begin{lemma}
\label{projinfilt}
Let $0=M_r\subset M_{r-1}\subset\cdots\subset M_0=M$ be a filtration
of $M$ with $M_{i-1}/M_i\in\add\CS$.
\begin{itemize}
\item[(i)]
If $M$ has no projective
remainder, then $M_i$ has no projective remainder, for all $i$.
\item[(ii)]
If the filtration is an $\CS$-radical filtration, then $M_i$ has no
projective remainder for $i\ge 1$.
\end{itemize}
\end{lemma}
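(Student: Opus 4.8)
The plan is to prove both parts by downward induction along the filtration, reducing in each step to a single layer $M_{i-1}/M_i \in \add\CS$ and the previously-established fact that $M_{i-1}$ has no projective remainder. For part (i), suppose $M$ has no projective remainder; it suffices to show that if a module $L$ with a one-step $\add\CS$-quotient $L \to \bar L$ (with $\ker \to L$ the submodule in question) has no projective remainder, then so does the submodule. So let $N \subseteq L$ with $L/N \in \add\CS$ and $N$ filtrable, and suppose $N = N' \oplus P$ with $P \ne 0$ projective and $N'$ filtrable. Then $L/N' \cong (L/N) \oplus P$, and since $L/N \in \add\CS$ while $P$ is projective (hence filtrable by \ref{dirfilt}, being a direct summand of the filtrable $N$, which sits inside the filtrable $L$), $L/N'$ is filtrable with a direct summand $P$; but then $L = N' \oplus (\text{something})$... more carefully: pulling $P$ back, $P$ being projective is injective, so the inclusion $N \to L$ splits off $P$, giving $L = P \oplus L''$ with $L''$ containing $N'$ and $L''/N' \cong L/N \in \add\CS$, so $L''$ is filtrable; hence $L = P \oplus L''$ exhibits a projective remainder of $L$, contradiction. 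Thus $N$ has no projective remainder. Applying this to each link $M_{i} \subseteq M_{i-1}$ starting from $M_0 = M$ gives part (i).

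For part (ii), the hypothesis is weaker — $M$ itself may be a filtrable projective module with a projective remainder — but we only need the conclusion for $i \ge 1$, and here we exploit that each $M_i$ (for $i \ge 1$) is a \emph{minimal} filtrable submodule of $M_{i-1}$ with $M_{i-1}/M_i \in \add\CS$. By Lemma \ref{minimalsurj}, minimality of $M_i$ in $M_{i-1}$ already forces $M_i$ to have no projective remainder — this is part of the equivalent characterization stated there. So part (ii) is essentially immediate from \ref{minimalsurj} applied to each link with index $\ge 1$: $M_i$ is minimal filtrable in $M_{i-1}$, hence by that lemma it has no projective remainder. There is no induction needed for (ii) beyond quoting \ref{minimalsurj} at each step.

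The main obstacle is the bookkeeping in part (i): making precise the claim that a projective direct summand of a filtrable submodule $N \subseteq L$ splits off $L$ as a whole, compatibly with the filtrable structure. The clean way is: $P$ projective $\Rightarrow$ $P$ injective (as $A$ is self-injective), so the composite $P \hookrightarrow N \hookrightarrow L$ splits, write $L = P \oplus L''$; the projection $L \to L''$ restricted to $N$ identifies $L''$'s copy of the image of $N$ with $N' = $ the complement of $P$ in $N$, so $N' \subseteq L''$ and $L''/N' \cong L/N \in \add\CS$, whence $L''$ is filtrable and $L = P \oplus L''$ is a projective remainder. One should double-check that $P$ filtrable is not actually needed for this argument — it is the injectivity of $P$, not its filtrability, that does the work — so the appeal to \ref{dirfilt} can be dropped from part (i). With that observation the proof of (i) is a single short paragraph, and (ii) is a one-line reference to \ref{minimalsurj}.
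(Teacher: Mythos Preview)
Your argument for part (i) is essentially the paper's: both split off the injective summand $P$ from the ambient module and observe that the complement is still filtrable, then iterate down the filtration. The paper states this as ``given $0\to N\oplus P\to M\to L\to 0$ with $N$ filtrable and $P$ projective, there is a filtrable $M'$ with $M=M'\oplus P$,'' which is exactly your injectivity-splitting step.

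For part (ii) you take a genuinely shorter route. You observe that the ``only if'' direction of Lemma~\ref{minimalsurj} already contains the statement that a minimal filtrable submodule $N$ with $M/N\in\add\CS$ has no projective remainder, so applying it to each link $M_i\subseteq M_{i-1}$ finishes immediately. The paper instead argues by contradiction: if some $M_i$ ($i\ge 1$) had a nonzero projective filtrable summand $P$, then by the mechanism of (i) one propagates $P$ up to a decomposition $M=M'\oplus P$ with $M'$ filtrable; since $P$ is filtrable it surjects onto some $S\in\CS$ with filtrable kernel, and then $M\twoheadrightarrow (M/M_1)\oplus S$ has filtrable kernel, contradicting the minimality of $M_1$. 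Your approach is cleaner and avoids re-invoking (i); the paper's approach has the minor advantage of making visible \emph{why} a projective remainder would violate minimality (it produces a strictly larger $\add\CS$-quotient), but this content is already packaged inside the proof of Lemma~\ref{minimalsurj}.
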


\begin{proof}
Consider an exact sequence $0\to N\oplus P\to M\to L\to 0$ of filtrable modules
with $P$ projective and $N$ filtrable. Then there is an extension $M'$ of
$L$ by $N$ such that $M=M'\oplus P$ and $M'$ is filtrable.
The first part of the lemma follows.

Assume now the filtration is an $\CS$-radical filtration.
Assume for some $i\ge 1$, we have
$M_i=N\oplus P$ with $N$ filtrable with no projective
remainder and $P$ projective and filtrable (Lemma \ref{dirfilt}).
Then, $M=M'\oplus P$ with $P$ filtrable by (i).
There is an exact sequence
$0\to L\to P\to S\to 0$ with $S\in\CS$ and $L$ filtrable. Now,
the canonical surjection $M'\oplus P\to M/M_1\oplus S$ has filtrable
kernel and this contradicts the minimality of $M_1$.
\end{proof}

\begin{prop}
Let $M_1$ and $M_2$ be two filtrable $A$-modules with no projective
remainder.
If $M_1\sim M_2$, then $M_1\simeq M_2$.
\end{prop}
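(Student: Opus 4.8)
The plan is to exploit the uniqueness of $\CS$-radical filtrations up to stable-trivial automorphisms (Proposition~\ref{series}) together with the extension-lifting machinery of Proposition~\ref{head}. First I would choose an $\CS$-radical filtration $0=M_r\subseteq\cdots\subseteq M_0=M_1$ of $M_1$. Since $M_1\sim M_2$, fix an isomorphism $\bar\phi:M_1\iso M_2$ in $A\mstab$. By Lemma~\ref{filtifinj} and Lemma~\ref{minimalsurj}, the minimality of $M_1^{(1)}$ inside $M_1$ is detected purely by the map $\Hom_{A\mstab}(M_1/M_1^{(1)},S)\to\Hom_{A\mstab}(M_1,S)$ being an isomorphism for all $S$; since this is a stable-categorical condition and $\bar\phi$ is a stable isomorphism, the image of the filtration layers of $M_1$ under $\bar\phi$ can be realised (inside $M_2$, after adjusting by a projective) as an $\CS$-radical filtration of $M_2$ with the same layers. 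The key point is that $M_2$ is itself filtrable with no projective remainder, so by Proposition~\ref{series} any two $\CS$-radical filtrations of $M_2$ differ by an automorphism that is stably the identity; hence it suffices to compare the ``transported'' filtration of $M_2$ with any genuine $\CS$-radical filtration of $M_2$.

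Concretely, I would argue by induction on $\dim M_1$. The base case $M_1\in\add\CS$ is immediate, since then $M_1\sim M_2$ with both sides having no projective remainder forces $M_1\simeq M_2$ by the $\Hom$-orthogonality in Hypothesis~\ref{hyp3}. For the inductive step, let $N_1$ be a minimal filtrable submodule of $M_1$ with $M_1/N_1\in\add\CS$, and similarly $N_2\subseteq M_2$. By Proposition~\ref{head}, $M_i/N_i\iso\bigoplus_{S}S\otimes\Hom_{A\mstab}(M_i,S)$ compatibly with the canonical maps; since $\bar\phi$ is a stable isomorphism it identifies $\Hom_{A\mstab}(M_1,S)$ with $\Hom_{A\mstab}(M_2,S)$, so $M_1/N_1\simeq M_2/N_2$ and, after composing $\bar\phi$ with a stable-trivial automorphism supplied by Proposition~\ref{head}, we may assume $\bar\phi$ carries the quotient $M_1\to M_1/N_1$ to $M_2\to M_2/N_2$ in the stable category. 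By Lemma~\ref{projinfilt}(ii), $N_1$ and $N_2$ have no projective remainder. It remains to see that $N_1\sim N_2$: this follows because the octahedron / five-lemma applied to the two surjections shows that the stable isomorphism $M_1\sim M_2$ compatible with the projections induces a stable isomorphism on the fibres $N_1\to N_2$ (the cones of $M_i\to M_i/N_i$ in $A\mstab$). By induction $N_1\simeq N_2$.

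Finally I would assemble the pieces: pick an honest isomorphism $\psi:N_1\iso N_2$, transport the extension class of $0\to N_1\to M_1\to M_1/N_1\to 0$ along $\psi$ and along the fixed isomorphism $M_1/N_1\iso M_2/N_2$, and compare with the extension class of $0\to N_2\to M_2\to M_2/N_2\to 0$. The two classes in $\Ext^1_A(M_2/N_2,N_2)$ need not be literally equal, but they have the same image in $\Ext^1_{A\mstab}$-style data because $\bar\phi$ was arranged to be compatible with both the sub and the quotient; the discrepancy is a projective map, which can be absorbed using the second assertion of Proposition~\ref{head} (adjusting the chosen isomorphism $N_1\iso N_2$ by an automorphism of $N_2$ that is stably the identity) exactly as in the proof of Lemma~\ref{stableker}. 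Once the two extension classes agree, the pullback square produces the desired isomorphism $M_1\iso M_2$. The main obstacle I anticipate is the bookkeeping in this last step: showing that the ambiguity in lifting $\bar\phi$ to honest maps on sub and quotient is precisely a projective perturbation, and that Proposition~\ref{head} gives enough room to kill it simultaneously with the compatibility already arranged; this is where one must be careful that the automorphisms of $N_2$ and of $M_2$ used to correct things are stably trivial so that no previously-gained compatibility is destroyed.
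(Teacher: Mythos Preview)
Your overall strategy coincides with the paper's: induct on the dimension, use Proposition~\ref{head} to peel off the common top layer $X=\bigoplus_{S\in\CS}S\otimes\Hom_{A\mstab}(M_1,S)$, observe via Lemma~\ref{projinfilt}(ii) that the kernels $N_1,N_2$ again have no projective remainder, apply the induction hypothesis to obtain $N_1\simeq N_2$, and then compare the two extension classes in $\Ext^1_A(X,N_i)$ to conclude $M_1\simeq M_2$.

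Where your account goes slightly astray is the final bookkeeping. Since $A$ is self-injective one has $\Ext^1_A(X,N)\simeq\Hom_{A\mstab}(X,\Omega^{-1}N)$, so the extension class is \emph{already} a stable-categorical invariant: there is no ``projective discrepancy'' between the two $\Ext^1$ classes to absorb. The genuine issue is rather that the map $\psi_*:\Ext^1_A(X,N_1)\to\Ext^1_A(X,N_2)$ depends only on the stable class of $\psi$, so adjusting $\psi$ by an automorphism of $N_2$ that is stably the identity --- your proposed use of the second assertion of Proposition~\ref{head} --- changes nothing and cannot correct a mismatch. What one actually needs is that $\psi$ lift the \emph{particular} stable isomorphism $N_1\sim N_2$ arising from the morphism of triangles determined by $\bar\phi$ and $\id_X$; then $\psi_*\zeta_1=\zeta_2$ on the nose and the two extensions are isomorphic. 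The paper secures this by writing ``an isomorphism $N_2\iso N_1$ lifting the stable isomorphism'', in effect running the induction with the marginally stronger hypothesis that any given stable isomorphism between filtrable modules with no projective remainder lifts to an honest one.
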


\begin{proof}
We prove the proposition by induction on 
$\min(\dim M_1,\dim M_2)$.
Fix an isomorphism $\phi$ from $M_2$ to $M_1$ in the stable category.
Let $X=\bigoplus_{S\in\CS}S\otimes \Hom_{A\mstab}(M_1,S)$ and
$g_1\in\Hom_{A\mstab}(M_1,X)$ be the canonical map.
Let $g_2=g_1\phi$.
By Propositions \ref{head} and \ref{series}, there are exact sequences
$$0\to N_1\to M_1\Rarr{f_1} X\to 0\ \textrm{ and }\ 
0\to N_2\to M_2\Rarr{f_2} X\to 0$$
with the image of $f_i$ in the stable category equal to $g_i$.
So, there is an isomorphism from $N_2$ to $N_1$ in the stable
category compatible with $\phi$.
By Lemma \ref{projinfilt}, $N_1$ and $N_2$ have no projective remainder.
By induction, we deduce that there is
an isomorphism $N_2\iso N_1$ lifting the stable isomorphism.
So, $M_1$ and $M_2$ are extensions of isomorphic modules,
with the same class in $\Ext^1$, hence are isomorphic.
\end{proof}

\subsection{Generators and reconstruction}
\subsubsection{}
We assume from now on that

\begin{hyp}
\label{hyp4}
$\CS$ satisfies Hypothesis \ref{hyp3} and
given $M\in A\mMod$, there is a
projective $A$-module $P$ such that $M\oplus P$ is filtrable.
\end{hyp}

\begin{prop}
Let $S\in\CS$. Let $P_S\to S$ be a projective cover of $S$ and
$P$ minimal projective such that $\Omega S\oplus P$ is filtrable.
Let $0=M_r\subseteq M_{r-1}\subseteq\cdots\subseteq M_1\subseteq M_0=
\Omega S\oplus P$ be an $\CS$-radical filtration.

Then
$0=M_r\subseteq M_{r-1}\subseteq\cdots\subseteq M_1\subseteq M_0\subseteq
P_S\oplus P$ is an $\CS$-radical filtration.

If $A$ is symmetric, then $M_{r-1}\simeq S$.
\end{prop}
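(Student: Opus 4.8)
\emph{Plan of proof.}
For the first assertion the idea is to apply Lemma~\ref{minimalsurj} to the submodule $M_0=\Omega S\oplus P$ of $P_S\oplus P$. It is filtrable by hypothesis, and the minimality of $P$ guarantees it has no projective remainder. One has $(P_S\oplus P)/M_0\simeq P_S/\Omega S\simeq S\in\add\CS$. Finally $P_S\oplus P$ is projective, so $\Hom_{A\mstab}(P_S\oplus P,S')=0$ for every $S'\in\CS$, and the canonical map $\Hom_{A\mstab}((P_S\oplus P)/M_0,S')\to\Hom_{A\mstab}(P_S\oplus P,S')$ is vacuously surjective. Lemma~\ref{minimalsurj} then says that $M_0$ is a minimal filtrable submodule of $P_S\oplus P$ with quotient in $\add\CS$, so adjoining the step $M_0\subseteq P_S\oplus P$ to the given $\CS$-radical filtration of $M_0$ yields an $\CS$-radical filtration of $P_S\oplus P$.

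For the second assertion, assume $A$ symmetric, so that (Tate duality with $\nu=\id$) there is a functorial isomorphism $\Hom_{A\mstab}(X,Y)^*\simeq\Hom_{A\mstab}(Y,\Omega X)$ for $X,Y$ in $A\mMod$. As $M_{r-1}\in\add\CS$, Hypothesis~\ref{hyp3} gives $M_{r-1}\simeq\bigoplus_{T\in\CS}T\otimes\Hom_{A\mstab}(T,M_{r-1})$, so it suffices to prove $\dim\Hom_{A\mstab}(T,M_{r-1})=\delta_{T,S}$ for all $T\in\CS$; I would do this by induction on $r$. If $r=1$, then $\Omega S$ (being in $\add\CS$) is already filtrable, so $P=0$ and $M_0=\Omega S\in\add\CS$; comparing $\End_{A\mstab}(\Omega S)\simeq\End_{A\mstab}(S)\simeq k$ with Hypothesis~\ref{hyp3} forces $\Omega S$ to be isomorphic to an object of $\CS$, and the duality above gives $\Hom_{A\mstab}(S,\Omega S)^*\simeq\Hom_{A\mstab}(\Omega S,\Omega S)\simeq k$, whence $\Omega S\simeq S$ again by Hypothesis~\ref{hyp3}. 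If $r>1$, then by Lemma~\ref{projinfilt}(ii) the module $M_1$ has no projective remainder, $0=M_r\subseteq\cdots\subseteq M_1$ is an $\CS$-radical filtration of $M_1$ of length $r-1$ with the same bottom term $M_{r-1}$, and Proposition~\ref{head} gives $M_0/M_1\simeq\bigoplus_T T\otimes\Hom_{A\mstab}(\Omega S,T)$; using the triangle $M_1\to M_0\to M_0/M_1\rightsquigarrow$ together with the duality to pin down the stable isomorphism class of $M_1$ as that of $\Omega S'\oplus P'$ for a suitable $S'\in\CS$ and projective $P'$, one concludes by induction.

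The delicate point is the inductive step of the second assertion. Hypothesis~\ref{hyp3} controls $\Hom_{A\mstab}(T,T')$ but not $\Hom_{A\mstab}(\Omega^{-1}T,T')$ for $T,T'\in\CS$, and the latter is precisely what enters the long exact sequences attached to the triangles $M_i\to M_{i-1}\to M_{i-1}/M_i\rightsquigarrow$; overcoming this is where the symmetry of $A$ must be used essentially. The route I would try is to describe $M_{r-1}$ both as the deepest $\CS$-radical layer of the projective module $P_S\oplus P$, which the first assertion has arranged to have $\CS$-head $S$, and --- via the $k$-duality $A\mMod\to A^\opp\mMod$, which is attached to the symmetric algebra $A^\opp$, turns $\CS$-radical filtrations into socle-type filtrations for the dual set $\{{S'}^*:S'\in\CS\}$, and sends $P_S$ to the indecomposable projective-injective $A^\opp$-module with top $S^*$ --- as the ``$\CS$-socle'' of the same module; comparing the two descriptions should give $M_{r-1}\simeq S$.
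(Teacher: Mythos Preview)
Your argument for the first assertion is correct and somewhat cleaner than the paper's. The paper shows directly that no surjection $P_S\oplus P\to S\oplus T$ with $T\in\CS$ (and with the $S$-component the standard map) can have filtrable kernel: restricting to $\Omega S\oplus P$ yields a \emph{projective} surjection onto $T$ with filtrable kernel, and Lemma~\ref{caracfilt} then contradicts the minimality of $P$. You instead invoke Lemma~\ref{minimalsurj}, noting that $\Hom_{A\mstab}(P_S\oplus P,S')=0$ makes the surjectivity condition vacuous and that minimality of $P$ gives ``no projective remainder'' for $M_0$; this packages the same ingredients more efficiently.

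For the second assertion, however, there is a genuine gap. Your inductive step requires identifying $M_1$ stably with $\Omega S'\oplus P'$ for some single $S'\in\CS$, and this is not true in general: by Proposition~\ref{head} one has $M_0/M_1\simeq\bigoplus_T T\otimes\Hom_{A\mstab}(\Omega S,T)=\bigoplus_T T\otimes\Ext^1_A(S,T)$, which can have many summands, and the kernel $M_1$ then has no reason to be of the form $\Omega S'$. Your fallback via $k$-duality and ``$\CS$-socle'' filtrations points in a workable direction but is not carried out; note also that dualising an $\CS$-radical filtration does not directly produce an $\CS^*$-radical filtration (radical and socle series differ already for simple modules). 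The paper's argument is far more direct and avoids induction entirely: Tate duality gives $\Hom_{A\mstab}(T,\Omega S)\simeq\Hom_{A\mstab}(S,T)^*=k^{\delta_{S,T}}$ for $T\in\CS$; since for each indecomposable summand $T$ of $M_{r-1}\in\add\CS$ the inclusion $T\hookrightarrow M_0=\Omega S\oplus P$ has filtrable cokernel, Lemma~\ref{caracfilt} makes it non-projective, so $\Hom_{A\mstab}(T,\Omega S)\neq 0$ and hence $T=S$. Finally $M_{r-1}$ cannot be $S^m$ with $m\ge 2$: the stable map $S^m\to\Omega S$ lies in $k^m$, and after a change of basis some copy of $S$ would map projectively into $M_0$ while still having filtrable cokernel, again contradicting Lemma~\ref{caracfilt}.
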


\begin{proof}
Let $f_1:P_S\to S$ be a surjective map and $f=(f_1,0):P_S\oplus P\to S$.
Let $T\in\CS$ and $g:P_S\oplus P\to T$ such that
we have an exact sequence
$0\to L\to P_S\oplus P\Rarr{f+g}S\oplus T\to 0$
with $L$ filtrable.

We have a commutative diagram
$$\xymatrix{
0\ar[r] & L\ar[r]\ar@{=}[d] & P_S\oplus P\ar[r]^{f+g} & S\oplus
T\ar[d]^{(0,\id)}\ar[r] & 0 \\
0\ar[r] & L\ar[r] & \Omega S\oplus P\ar@{^{(}->}[u]\ar[r] &T\ar[r] & 0 
}$$
The surjection $\Omega S\oplus P\to T$ is projective and has filtrable
kernel. From Lemma \ref{caracfilt}, we get a contradiction to the minimality of
$P$. It follows that $\Omega S\oplus P$ is a minimal submodule of
$P_S\oplus P$ such that the quotient is in $\add\CS$.

\smallskip
We have $\Hom_{A\mstab}(T,\Omega S)\simeq \Hom_{A\mstab}(S,T)^*$,
since $A$ is symmetric. Now, $\Hom_{A\mstab}(M_{r-1},\Omega S\oplus P)\not=0$
by Lemma \ref{caracfilt}. The second part of the proposition follows.
\end{proof}

Let $M$ and $N$ be two $A$-modules with 
filtrations
$0=M_r\subseteq M_{r-1}\subseteq\cdots\subseteq M_0=M$
$0=N_s\subseteq N_{s-1}\subseteq\cdots\subseteq N_0=N$.
Let $\Hom_A^f(M,N)$ be the subspace of $\Hom_A(M,N)$ of
filtered maps (\ie, those $g$ such that $g(M_i)\subseteq N_i$).
We put $\Mbar_i=M_i/M_{i+1}$. We denote
by $\phi_i$ the composition of canonical maps
$\phi_i:\Hom_A^f(M,N)\to \Hom_A(\Mbar_i,\Nbar_i)\to
\Hom_{A\mstab}(\Mbar_i,\Nbar_i)$.

We view $N'=N_i$ as a filtered module with the induced filtration
$0=N'_{s-i}\subseteq N'_{s-i-1}=N_{s-1}\subseteq\cdots\subseteq 
N'_1=N_{i+1}\subseteq N'_0=N'$.

\begin{lemma}
\label{indeplift}
Let $M$ be a filtrable $A$-module with an $\CS$-radical filtration 
and $N$ be a filtrable $A$-module with an $\CS$-filtration.
Let $f\in\Hom_A^f(M,N)$ with $\phi_0(f)=0$. Then
$\phi_i(f)=0$ for all $i$.
\end{lemma}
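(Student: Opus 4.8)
The statement to prove is Lemma~\ref{indeplift}: if $M$ carries an $\CS$-radical filtration, $N$ carries an $\CS$-filtration, and $f\in\Hom_A^f(M,N)$ satisfies $\phi_0(f)=0$, then $\phi_i(f)=0$ for all $i$. The natural approach is induction on $i$, reducing everything to a single step: showing that if $f$ restricted to $M_1$ lands (filtered-ly) in $N_1$ and $\phi_1$ of that restriction vanishes, the argument can be iterated. So the heart of the matter is the base case of the induction step, namely that $\phi_0(f)=0$ forces $\phi_1(f)=0$ — and then one applies this to the filtered map $f|_{M_1}\colon M_1\to N_1$, using that $M_1$ inherits an $\CS$-radical filtration (by the very definition of $\CS$-radical filtration, each $M_i$ is a minimal filtrable submodule of $M_{i-1}$, so $M_1\supseteq M_2\supseteq\cdots$ is again an $\CS$-radical filtration of $M_1$) and $N_1$ inherits an $\CS$-filtration (as spelled out just before the lemma).

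The key step is therefore: $\phi_0(f)=0\implies\phi_1(f)=0$. Here is how I would argue it. Since $\phi_0(f)=0$, the induced map $\Mbar_0\to\Nbar_0$ is projective, i.e.\ factors through a projective $A$-module. Lifting through the projective covers / using that $f(M_0)\subseteq N_0$, I want to replace $f$ by a filtered map $f'$ with $f'\sim f$ (stably equal, in the sense of the paper's $\sim$ on maps) and with $f'(M_0)\subseteq N_1$; the point is that changing $f$ by a projective map on the top layer does not change any $\phi_i$. Concretely: the composite $M_0\to N_0\to\Nbar_0$ is projective, so it factors as $M_0\to Q\to\Nbar_0$ with $Q$ projective; since $N_0\to\Nbar_0$ is a projective cover (or at least split on the projective part) one lifts $Q\to\Nbar_0$ to $Q\to N_0$, subtracts, and obtains $f'\colon M_0\to N_1$ with $f-f'$ (landing in $N_0$) having projective image in $\Nbar_0$ — hence $f'$ still filtered with $f'(M_i)\subseteq N_i$ and with $f'(M_0)\subseteq N_1$. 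Crucially $\phi_i(f')=\phi_i(f)$ for all $i$ because the modification took place in $N_1$ modulo projectives on the $0$-th layer, leaving the maps $\Mbar_i\to\Nbar_i$ unchanged in the stable category for $i\ge1$, and changing the $0$-th by a projective map so $\phi_0(f')=0$ still. Now $f'$ maps $M_0$ into $N_1$; in particular $\phi_1(f')$ is computed from $\Mbar_1\to\Nbar_1$, but I claim $f'|_{M_1}\colon M_1\to N_1$ is \emph{projective} modulo $N_2$ — and that is exactly where minimality of the filtration enters.

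The role of minimality (this is the subtle point, so I flag it as the main obstacle): $f'$ gives a filtered map $M_0\to N_1$, i.e.\ a map from a filtrable module to a filtrable module; but I need to know something forces $f'|_{M_1}$ to be projective on layer $1$. The mechanism is Lemma~\ref{minimalsurj}/Lemma~\ref{filtifinj}: because $M_1$ is a \emph{minimal} filtrable submodule of $M_0$ with $M_0/M_1\in\add\CS$, the canonical map $\Hom_{A\mstab}(\Mbar_0,S)\to\Hom_{A\mstab}(M_0,S)$ is surjective and $M_1$ has no projective remainder; dually, one reads the constraint on maps out of $M_0$ that restrict to maps from $M_1$. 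A map $M_0\to N_1$ whose restriction to $M_1$ is \emph{non}-projective on the top layer of $N_1$ would, after composing with a suitable surjection $N_1\to S$ ($S\in\CS$), produce a non-projective map $M_1\to S$ that extends to $M_0$, which by the surjectivity in Lemma~\ref{minimalsurj} together with $\phi_0(f')=0$ leads to a contradiction — the extension would have to be accounted for by a map $\Mbar_0\to S$, but that map composed back in is precisely controlled by $\phi_0$, which vanishes. Thus $\phi_1(f')=0$, hence $\phi_1(f)=0$. Then apply the whole argument to $f|_{M_1}\colon M_1\to N_1$ (replacing $M$ by $M_1$ with its $\CS$-radical filtration $M_1\supseteq M_2\supseteq\cdots$ and $N$ by $N_1$) to get $\phi_2(f)=0$, and so on by induction on the length of the filtration of $M$. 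The bookkeeping — tracking that each modification is by a projective map on exactly one layer and leaves all other $\phi_i$ alone, and that $\phi_0$ of the modified map stays zero — is the part that needs care but is routine; the genuinely load-bearing input is the minimality of the $\CS$-radical filtration of $M$ via Lemmas~\ref{minimalsurj} and \ref{filtifinj}.
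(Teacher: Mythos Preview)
Your overall architecture matches the paper's: reduce to $\bar f_0=0$ by subtracting a projective correction, show $\phi_1(f)=0$ by contradiction from the minimality of $M_1$, then induct by restricting to $M_1\to N_1$. Two points need repair.

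\textbf{The reduction step has a gap.} You factor the composite $M_0\to\bar N_0$ through a projective $Q$ and subtract $p\colon M_0\to Q\to N_0$. But nothing ensures $p(M_i)\subseteq N_i$ for $i\ge 1$, so $f'=f-p$ need not be filtered and $\phi_i(f')$ need not even be defined; your assertion that ``the maps $\bar M_i\to\bar N_i$ are unchanged in the stable category for $i\ge1$'' is unjustified. The paper fixes this by factoring \emph{the map $\bar f_0\colon\bar M_0\to\bar N_0$} (not the composite from $M_0$) through a projective $P$, lifting $P\to\bar N_0$ to $P\to N$, and setting $p\colon M\to\bar M_0\to P\to N$. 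Then $p|_{M_1}=0$, so $f-p$ agrees with $f$ on $M_1$ (hence remains filtered, with identical $\phi_i$ for $i\ge1$) while $\overline{(f-p)}_0=0$ exactly.

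\textbf{The contradiction is mis-identified.} With $\bar f_0=0$, suppose $\bar f_1$ is not projective and choose a split surjection $g\colon\bar N_1\to S$ with $g\bar f_1$ not projective. Your idea of studying $h\colon M\to N_1\to\bar N_1\xrightarrow{g}S$ via Lemmas~\ref{minimalsurj} and~\ref{filtifinj} does work, but not for the reason you give. Surjectivity of $\Hom_{A\mstab}(\bar M_0,S)\to\Hom_{A\mstab}(M,S)$ (Lemma~\ref{minimalsurj}) shows $h\sim\tilde h\circ\can_0$ for some $\tilde h\colon\bar M_0\to S$; since $\can_0|_{M_1}=0$, this makes $h|_{M_1}$ \emph{projective}. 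On the other hand $h|_{M_1}=(g\bar f_1)\circ\can_1$, and injectivity of $\Hom_{A\mstab}(\bar M_1,S)\to\Hom_{A\mstab}(M_1,S)$ (Lemma~\ref{filtifinj}, applicable because $M_1$ has no projective remainder by Lemma~\ref{projinfilt}(ii) and $\ker\can_1=M_2$ is filtrable) then forces $g\bar f_1$ to be projective --- the desired contradiction. The vanishing of $\phi_0$ plays no further role here; your phrase ``controlled by $\phi_0$, which vanishes'' is a red herring.

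For comparison, the paper does not quote Lemmas~\ref{minimalsurj}/\ref{filtifinj} at this point but argues directly: the map $(\can,gf)\colon M/M_2\to\bar M_0\oplus S$ is surjective with kernel $L=\ker(g\bar f_1)\in\add\CS$, so the preimage of $L$ in $M$ is a filtrable submodule strictly inside $M_1$ with quotient $\bar M_0\oplus S\in\add\CS$, contradicting minimality of $M_1$ outright. Your route and the paper's are two sides of the same minimality argument; either is fine once the reduction step is corrected as above.
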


\begin{proof}
The map $\fbar_0:\Mbar_0\to\Nbar_0$ induced by $f$ is projective. So
there is a projective module $P$ and a commutative diagram
$$\xymatrix{
M \ar@{>>}[dd] && N\ar@{>>}[dd] \\
& P \ar@{.>}[ur]\ar[dr] \\
\Mbar_0\ar[rr]_{\fbar_0}\ar[ur] && \Nbar_0
}$$
Let $p$ be the composition $p:M\to \Mbar_0\to P\to N$. Then
$f-p\sim f$, $f-p$ and $f$ have the same restriction to $M_1$, and
$\overline{(f-p)}_0=0$. Consequently it is enough to prove the lemma in the
case where $\fbar_0=0$.

From now on, we assume $\fbar_0=0$.
Assume the map $\fbar_1:\Mbar_1\to\Nbar_1$ is not projective. So
there is $S\in\CS$ and a (split) surjection $g:\Nbar_1\to S$ such that
$g\fbar_1:\Mbar_1\to S$ is not projective. Let $s:S\to \Mbar_1$ be a
right inverse to $g$, and let $L$
be the kernel of $g\fbar_1$.

We have an exact sequence
$0\to L\to M/M_2\Rarr{(\can,gf)} \Mbar_0\oplus S\to 0$.
So the inverse image of $L$ in $M_1$ is a filtrable submodule of
$M$ with quotient isomorphic to $\Mbar_0\oplus S$. This contradicts the
fact that $M_1$ is a minimal filtrable submodule of $M$ such that
$M/M_1\in\add\CS$.
So $\fbar_1$ is projective; \ie, $\phi_1(f)=0$.

We now prove by induction that $\phi_i(f)=0$ for all $i$. Assume 
$\phi_d(f)=0$. Then, we apply the result above to
the filtered modules $M_d$ and $N_d$ to get $\phi_{d+1}(f)=0$.
\end{proof}

\subsubsection{}
We define a category $\CG$ as follows.

$\bullet$ Its objects are $A$-modules together with a fixed
$\CS$-radical filtration.

$\bullet$ We define
$\Hom_\CG(M,N)_i$ as the image of $\Hom_A^f(M,N_i)$ in
$\Hom_{A\mstab}(\Mbar_0,\Nbar_i)$. We put
$\Hom_\CG(M,N)=\oplus_i \Hom_\CG(M,N)_i$.

$\bullet$ Let $f\in \Hom_\CG(M,N)_i$ and $g\in \Hom_\CG(L,M)_j$.
Let $\tf:M\to N_i$ be a filtered map lifting $f$. It induces
a map $\phi_j(\tf)\in\Hom_{A\mstab}(\Mbar_j,\Nbar_{i+j})$
independent of the choice of $\tf$ (Lemma \ref{indeplift}).
We define the product $fg$ to be $\phi_j(\tf)\circ \phi_0(g)$.

\smallskip
Given $S\in\CS$, let $P_S\to S$ be a projective cover of $S$ and
$Q_S$ projective minimal such that $\Omega S\oplus Q_S$ is filtrable.
Fix a radical filtration of $P_S\oplus Q_S$ with first term
$\Omega S\oplus Q_S$.

Let $M=\oplus_{S\in\CS}(P_S\oplus Q_S)$. This comes with an $\CS$-radical
filtration.
We have constructed a $\BZ_{\ge 0}$-graded $k$-algebra $\End_\CG(M)$.

The following Lemma is clear.
\begin{lemma}
Let $\CS$ be a complete set of representatives of isomorphism
classes of simple $A$-modules. Then
we have an equivalence $\gr(A\mMod)\iso \CG$. If $A$ is basic,
then $\End_\CG(M)$ is
isomorphic to the graded algebra associated with the radical filtration
of $A$.
\end{lemma}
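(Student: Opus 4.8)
The plan is to reduce everything to the case where $\CS$ is the set of all simple $A$-modules and then simply unwind the definitions of $\CG$ and of $\gr(A\mMod)$. First I would record the basic dictionary: for this choice of $\CS$, $\add\CS$ is the class of semisimple modules, every $A$-module is filtrable, and the minimal filtrable submodule $N$ of a module $M'$ with $M'/N$ semisimple is exactly $\rad M'$. Hence the unique $\CS$-radical filtration of a module $M$ is its radical filtration $M\supseteq\rad M\supseteq\rad^2 M\supseteq\cdots$; in particular the filtration is canonical, so an object of $\CG$ is just an $A$-module and $\CG$ and $\gr(A\mMod)$ have the same objects.

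The key point — where essentially all the content lies — is the elementary remark that \emph{for semisimple $A$-modules $X,Y$ the canonical map $\Hom_A(X,Y)\to\Hom_{A\mstab}(X,Y)$ is an isomorphism}. Since $A$ has no projective simple module, each indecomposable projective is non-simple and local, so its socle is contained in its radical; hence $\operatorname{soc}P\subseteq\rad P$ for every projective $P$. If $f\colon X\to Y$ factors as $X\Rarr{\alpha}P\Rarr{\beta}Y$ with $P$ projective, then $\alpha(X)\subseteq\operatorname{soc}P\subseteq\rad P$, so $f(X)=\beta(\alpha(X))\subseteq\beta(\rad P)=\rad A\cdot\beta(P)\subseteq\rad Y=0$; thus the map is injective, and it is visibly surjective. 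Granting this, I would identify the Hom-spaces as follows. Any $A$-linear $g\colon M\to\rad^i N$ automatically satisfies $g(\rad^j M)=\rad^j(g(M))\subseteq\rad^{i+j}N$, so $\Hom_A^f(M,N_i)=\{g\in\Hom_A(M,N)\mid\im g\subseteq\rad^i N\}$, which is precisely the term $\Hom_A(M,N)^i$ of the radical filtration of $\Hom_A(M,N)$. The composite $\Hom_A(M,N)^i\to\Hom_A(\Mbar_0,\Nbar_i)\to\Hom_{A\mstab}(\Mbar_0,\Nbar_i)$ has kernel $\Hom_A(M,N)^{i+1}$: the first map has kernel $\{g\mid\im g\subseteq\rad^{i+1}N\}$, and the second is injective by the remark applied to the semisimple modules $\Mbar_0,\Nbar_i$. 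So $\Hom_\CG(M,N)_i\iso\Hom_A(M,N)^i/\Hom_A(M,N)^{i+1}=\Hom_{\gr(A\mMod)}(M,N)_i$, naturally in $M$ and $N$. For composition: given $f\in\Hom_\CG(M,N)_i$ and $g\in\Hom_\CG(L,M)_j$, a filtered lift $\tf\colon M\to N_i$ of $f$ sends $\rad^j M$ into $\rad^{i+j}N$, and one checks that $\phi_j(\tf)\circ\phi_0(g)$ is the map $\Lbar_0\to\Nbar_{i+j}$ induced by the honest composite $\tf\circ g_0\colon L\to\rad^{i+j}N$ for any lift $g_0$ of $g$ — which, since $\tf\in\Hom_A(M,N)^i$ and $g_0\in\Hom_A(L,M)^j$, represents exactly the product $[\tf]\cdot[g_0]$ in $\gr(A\mMod)$. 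This gives the equivalence $\gr(A\mMod)\iso\CG$.

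For the second assertion, suppose $A$ is basic. With $\CS$ the set of all simples, $\Omega S=\rad P_S$ is already filtrable, so the minimal $Q_S$ is $0$, and $M=\bigoplus_{S\in\CS}P_S\simeq{}_A A$ equipped with its radical filtration. Under the equivalence just constructed, $\End_\CG(M)$ is identified with $\End_{\gr(A\mMod)}({}_A A)=\gr\End_A({}_A A)$; since the radical of $A$ is a two-sided ideal, this is the graded algebra associated with the radical filtration of $A$ (one may work on either side).

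I expect the only real obstacle to be the displayed remark about semisimple modules in the stable category — everything else is a direct, if slightly bookkeeping-heavy, unwinding of the definitions of $\Hom_\CG$ and of the product in $\CG$, and matching the index shifts against the radical filtration of $\Hom_A(M,N)$.
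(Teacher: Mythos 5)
Your proposal is correct, and since the paper offers no argument here (it simply declares the lemma clear), your unwinding is exactly the intended verification: with $\CS$ the set of simples, minimal filtrable submodules with quotient in $\add\CS$ are radicals, so $\CS$-radical filtrations are radical filtrations, and the key observation that $\Hom_A(X,Y)\to\Hom_{A\mstab}(X,Y)$ is an isomorphism for $X,Y$ semisimple (using that $A$ has no projective simple module) identifies $\Hom_\CG(M,N)_i$ with $\Hom_A(M,N)^i/\Hom_A(M,N)^{i+1}$ compatibly with composition, while $Q_S=0$ and $M\simeq {}_AA$ give the basic case. The only loose point — whether $\End_{\gr(A\mMod)}({}_AA)$ is $\gr A$ or $(\gr A)^{\opp}$, depending on composition conventions — is an abuse the paper itself makes, and you flag it appropriately.
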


\smallskip
We have now obtained our partial reconstruction result~:
\begin{thm}
Let $B$ be a selfinjective algebra with no simple projective
module. Let $M$ be an $(A,B)$-bimodule inducing a stable equivalence
and having no projective direct summand. Let $\CS=\{M\otimes_B L\}$
where $L$ runs over a complete set of representatives of isomorphism
classes of simple $B$-modules.

Then, there is an equivalence
$\gr (B\mMod)\iso \CG$. If $B$ is basic, there is an isomorphism
between the graded algebra
associated with the radical filtration of $B$ and $\End_{\CG}(M)$.
\end{thm}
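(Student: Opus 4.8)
The plan is to reduce the theorem to the previously established structural results for filtrable modules, using the fact that $M\otimes_B-$ intertwines the radical filtration on $B\mMod$ with $\CS$-radical filtrations on $A\mMod$. First I would observe that, by Linckelmann's results recalled above, we may assume the stable equivalence is induced by the exact functor $G=M\otimes_B-$, that $G(L)$ is indecomposable for $L$ simple, and that $\CS=\{G(L)\}$ (up to isomorphism); moreover the image of $G$ is precisely the class of modules admitting a filtration with successive quotients in $\CS$ (Proposition on filtrable objects in \S 4.1), and the subcategory of filtrable $A$-modules with no projective remainder is equivalent, via $G$, to the full subcategory of $B\mMod$ closed under the radical filtration. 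Under this dictionary, for a $B$-module $L$ with radical filtration $L=L_0\supseteq \rad L\supseteq\cdots$, the image $G(L)$ carries the filtration $G(L)\supseteq G(\rad L)\supseteq\cdots$; I would check that this is an $\CS$-radical filtration in the sense of \S 4.2, i.e. that each $G(\rad^{i+1}L)$ is a \emph{minimal} filtrable submodule of $G(\rad^i L)$ with quotient in $\add\CS$. This follows from Lemma \ref{minimalsurj}: minimality is equivalent to surjectivity of $\Hom_{A\mstab}(G(\rad^iL)/G(\rad^{i+1}L),S)\to\Hom_{A\mstab}(G(\rad^iL),S)$, and since $G$ induces a stable equivalence and an isomorphism on $\Ext^1$, this surjectivity corresponds to the tautological surjectivity of $\Hom_B(\rad^iL/\rad^{i+1}L,T)\to\Hom_B(\rad^iL,T)$ (the head of $\rad^iL$ surjects onto every semisimple quotient) together with the fact that $\rad^{i+1}L$, being a submodule of $\rad^iL$ inside a module with no projective remainder, has no projective remainder (Lemma \ref{projinfilt}).

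Next I would define the functor $\gr(B\mMod)\to\CG$ on objects by sending $L$ to $G(L)$ equipped with the $\CS$-radical filtration above, and on morphisms as follows: a graded map $L\to L'$ of degree $i$ is represented by a filtered map $L\to \rad^i L'$, and applying $G$ gives a filtered map $G(L)\to G(\rad^iL')$, whose class in $\Hom_{A\mstab}(\overline{G(L)}_0,\overline{G(\rad^iL')}_0)$ is the desired element of $\Hom_\CG(G(L),G(L'))_i$. I would check this is well-defined using Lemma \ref{indeplift}: two lifts of the same graded map differ by a filtered map inducing $0$ on the top layer, hence $0$ on all layers, so after applying $G$ and passing to the stable category the class is unchanged. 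Compatibility with composition is a direct comparison of the two product formulas — the one on $\gr(B\mMod)$ (shift-and-compose on associated graded pieces) and the one on $\CG$ (use a filtered lift $\tf$, form $\phi_j(\tf)\circ\phi_0(g)$) — and Lemma \ref{indeplift} again guarantees independence of the lift. Faithfulness and fullness reduce, degree by degree, to the assertion that $G$ induces an isomorphism $\Hom_B(\rad^iL/\rad^{i+1}L,\rad^jL'/\rad^{j+1}L')\to\Hom_{A\mstab}(\overline{G(L)}_i,\overline{G(L')}_j)$ on the relevant layers; surjectivity onto the stable $\Hom$ is the stable equivalence property applied to objects of $\add\CS$, and the kernel issue is controlled by the fact that stably-zero filtered maps between $\CS$-radically-filtered modules induce $0$ on every layer. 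Essential surjectivity of $\gr(B\mMod)\to\CG$ follows because every object of $\CG$ is a filtrable $A$-module with no projective remainder equipped with some $\CS$-radical filtration, hence lies in the image of $G$ (up to the chosen module decomposition), and by Proposition \ref{series} any two $\CS$-radical filtrations differ by a stably-trivial automorphism, so the choice of filtration does not matter up to isomorphism in $\CG$.

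Finally, with the equivalence $\gr(B\mMod)\iso\CG$ in hand, I would specialize to the distinguished object. On the $B$-side, $\bigoplus_L B e_L$ with $B$ basic is $B$ itself, carrying its radical filtration, and $\End_{\gr(B\mMod)}(B)$ is by definition the graded algebra associated with the radical filtration of $B$. I would then identify the image of this object in $\CG$ with $M=\bigoplus_{S\in\CS}(P_S\oplus Q_S)$ equipped with the prescribed $\CS$-radical filtration whose first term is $\Omega S\oplus Q_S$: this uses the Proposition stating that the chosen radical filtration of $P_S\oplus Q_S$ is an $\CS$-radical filtration of $P_S\oplus Q_S$ inside $P_S\oplus P$ — equivalently, $G$ of the radical filtration of the projective cover $P_L^B$ of $L$ is exactly this filtration, since $G(P_L^B)\simeq P_S\oplus Q_S$ (a projective-by-filtrable-remainder analysis) and $G(\rad P_L^B)\simeq \Omega S\oplus Q_S$. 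Transporting $\End$ across the equivalence then gives the claimed isomorphism.

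The main obstacle, I expect, is the verification that $G$ of the radical filtration of a $B$-module is literally an \emph{$\CS$-radical} filtration — i.e. the minimality of each term — because minimality is a stable-category condition (surjectivity of a $\Hom_{A\mstab}$ map) that must be matched against the module-category condition ``head is the largest semisimple quotient'' on the $B$-side; closing this gap requires Lemma \ref{minimalsurj} together with careful bookkeeping of projective remainders via Lemmas \ref{dirfilt} and \ref{projinfilt}, and in the identification of the distinguished object it also requires pinning down $G(P_L^B)$ and $G(\rad P_L^B)$ precisely (rather than just up to stable isomorphism), which is where the minimality of $Q_S$ and the Proposition preceding the definition of $\CG$ do the work.
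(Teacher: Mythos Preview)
Your proposal is correct and is exactly the argument the paper intends --- in fact the paper gives no proof whatsoever for this theorem, presenting it as the immediate consequence of the preceding Lemma (the case $\CS=\{\text{simple }A\text{-modules}\}$) transported along $G=M\otimes_B-$. You have spelled out that transport in detail: $G$ carries the radical filtration of a $B$-module to an $\CS$-radical filtration (minimality via Lemma~\ref{minimalsurj}, no-projective-remainder via Lemma~\ref{projinfilt}), the functor on graded $\Hom$'s is well defined by Lemma~\ref{indeplift}, and full faithfulness reduces to the stable equivalence on layers in $\add\CS$. This is precisely what the paper leaves implicit.

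One small correction: you assert that ``every object of $\CG$ is a filtrable $A$-module with no projective remainder'', but this is false --- the distinguished object $\bigoplus_S(P_S\oplus Q_S)$ is itself projective. What is true, and what suffices, is that in any $\CS$-radical filtration the terms $M_i$ for $i\ge 1$ have no projective remainder (Lemma~\ref{projinfilt}(ii)); only $M_0$ may carry a projective piece. This does not damage your essential-surjectivity argument: any object of $\CG$ is filtrable, hence in the image of $G$ by the Proposition in \S4.1, and Proposition~\ref{series} should be applied to $M_1$ (which has no projective remainder) rather than to $M_0$, with the top layer matched separately via Proposition~\ref{head}. With that adjustment your argument is complete.
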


\subsubsection{}
The category $\CG$ can be constructed directly as in \S \ref{filtT},
using only the stable category with its triangulated structure.

\begin{prop}
\label{filtinstab}
Let $M$ be a module with an $\CS$-filtration
$0=M_r\subseteq M_{r-1}\subseteq\cdots\subseteq M_0=M$.
This is an $\CS$-radical filtration if and only if
\begin{itemize}
\item 
$\Hom_{A\mstab}(M_i/M_{i+1},S)\to \Hom_{A\mstab}(M_i,S)$ is an isomorphism
for all $S\in\CS$ and $i>0$,
\item
$\Hom_{A\mstab}(M_0/M_1,S)\to \Hom_{A\mstab}(M_0,S)$ is surjective 
for all $S\in\CS$, and
\item
$M_i$ has no projective remainder for $i>0$.
\end{itemize}
Assume the filtration is an $\CS$-radical filtration. Then $M$ has no
projective remainder if and only if
$\Hom_{A\mstab}(M_0/M_1,S)\to \Hom_{A\mstab}(M_0,S)$ is an isomorphism.
\end{prop}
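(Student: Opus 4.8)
The plan is to reduce everything to the characterisations of minimality established earlier, namely Lemma~\ref{minimalsurj}, Lemma~\ref{filtifinj}, and the projective-remainder bookkeeping in Lemma~\ref{projinfilt} and Lemma~\ref{caracfilt}. First I would treat the three bulleted conditions as the defining data and show they are equivalent, layer by layer, to $M_i$ being a minimal filtrable submodule of $M_{i-1}$ with $M_{i-1}/M_i\in\add\CS$. For the inductive step at a fixed $i\ge 1$: Lemma~\ref{minimalsurj} says $M_i$ is minimal inside $M_{i-1}$ precisely when $M_i$ has no projective remainder and $\Hom_{A\mstab}(M_{i-1}/M_i,S)\to\Hom_{A\mstab}(M_{i-1},S)$ is surjective for all $S\in\CS$. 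So the remaining point is that, under the no-projective-remainder hypothesis on $M_i$, this surjectivity can be upgraded to bijectivity: that is exactly Lemma~\ref{filtifinj} applied to the surjection $M_{i-1}\to M_{i-1}/M_i$, whose kernel $M_i$ is filtrable. Conversely bijectivity trivially implies surjectivity. This matches the first two bullets (bijective for $i>0$, merely surjective for $i=0$, since $M_0=M$ is allowed a projective remainder and Lemma~\ref{filtifinj} needs "no projective remainder" on the source). The third bullet is forced for $i\ge 1$ by Lemma~\ref{minimalsurj} itself, and I should also note that $M_{i-1}$ having no projective remainder for $i-1\ge 1$ is automatic once $M_0$ does or once the filtration is radical, via Lemma~\ref{projinfilt}.

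Second, I would assemble the induction. Going from the $\CS$-radical filtration to the three conditions is immediate once the fixed-$i$ equivalence above is in hand, together with Lemma~\ref{projinfilt}(ii), which gives the third bullet. For the converse I proceed downward in the filtration length: assuming the three conditions, $M_{r-1}\in\add\CS$ is its own minimal filtrable submodule situation trivially, and then at each stage the fixed-$i$ equivalence promotes "$M_i$ minimal in $M_{i-1}$" from the bijectivity (for $i\ge 1$) or surjectivity (for $i=1$, landing in $M_0$) hypothesis plus the no-projective-remainder hypothesis on $M_i$. One has to be a little careful that Lemma~\ref{filtifinj} requires the source $M_{i-1}$ to have no projective remainder; for $i-1\ge 1$ this is supplied by the third bullet, and for $i-1=0$ we are in the surjective-only case where we use Lemma~\ref{minimalsurj} directly rather than Lemma~\ref{filtifinj}.

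For the final sentence: assume the filtration is already an $\CS$-radical filtration. If $M$ has no projective remainder, then $M_0=M$ satisfies the hypothesis of Lemma~\ref{filtifinj}, so $\Hom_{A\mstab}(M_0/M_1,S)\to\Hom_{A\mstab}(M_0,S)$ is injective, hence an isomorphism (surjectivity already holds by minimality of $M_1$, Lemma~\ref{minimalsurj}). Conversely, if this map is an isomorphism, I would argue that a hypothetical decomposition $M=N\oplus P$ with $P$ projective, $P\ne0$, $N$ filtrable, contradicts it: by Lemma~\ref{dirfilt} such a $P$ is filtrable, choose a surjection $P\to S$ with $S\in\CS$ (so $p:M\to S$ through $P$ is projective as a map), and this $p$ maps to a nonzero-looking class that, combined with injectivity and the splitting, forces $p$ to vanish stably in a way incompatible with $P\ne 0$ being filtrable — essentially the argument in the last paragraph of Lemma~\ref{filtifinj}'s proof and in Lemma~\ref{caracfilt}. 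The main obstacle I anticipate is precisely this bookkeeping at the $i=0$ layer, where the source is allowed a projective remainder and Lemma~\ref{filtifinj} does not apply verbatim, so one must route through Lemma~\ref{minimalsurj} and Lemma~\ref{caracfilt} by hand and keep the induction hypotheses on $M_i$ ($i\ge1$) straight; the $i\ge1$ layers are routine given the earlier lemmas.
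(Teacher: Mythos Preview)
Your treatment of the main equivalence (the three bulleted conditions versus the $\CS$-radical property) is essentially the paper's proof: forward via Lemma~\ref{minimalsurj}, Lemma~\ref{projinfilt}(ii), then Lemma~\ref{filtifinj}; backward via Lemma~\ref{minimalsurj} at each layer. Likewise, the forward direction of the final sentence (no projective remainder $\Rightarrow$ isomorphism) is exactly the paper's one-line appeal to Lemma~\ref{filtifinj}.

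The gap is in your converse for the final sentence. Your plan is to take $M=N\oplus P$ with $P\neq 0$ projective and filtrable, pick a surjection $P\to S$, and derive a contradiction from the resulting projective map $p:M\to S$. But $p$ is zero in $A\mstab$, so there is no ``nonzero-looking class'' to contradict injectivity; and you cannot invoke Lemma~\ref{caracfilt} or Lemma~\ref{filtifinj} here because both require the source to have no projective remainder, which is precisely what you are trying to prove. Trying instead to enlarge the quotient to $M/M_1\oplus S$ and contradict minimality of $M_1$ runs into the problem that the combined map $M\to M/M_1\oplus S$ need not be surjective (you would need the $P$-component of $M_1$ to surject onto $S$, which is not given).

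The paper closes this gap differently. Write $M=M'\oplus P$ with $M'$ filtrable and (without loss) having no projective remainder. The assumed isomorphism gives $\Hom_{A\mstab}(M/M_1,S)\iso\Hom_{A\mstab}(M,S)\iso\Hom_{A\mstab}(M',S)$, so Proposition~\ref{head} applied to $M'$ produces a surjection $g:M'\to M/M_1$ with filtrable kernel whose composite $M\twoheadrightarrow M'\xrightarrow{g}M/M_1$ is stably the canonical map $M\to M/M_1$. Lemma~\ref{stableker} then forces $M_1\simeq\ker g\oplus P$; since $M_1$ has no projective remainder (already established), $P=0$. You should route through Proposition~\ref{head} and Lemma~\ref{stableker} rather than Lemma~\ref{caracfilt}/\ref{filtifinj} at this step.
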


\begin{proof}
Let  $M$ be a module with an $\CS$-radical filtration
$0=M_r\subseteq M_{r-1}\subseteq\cdots\subseteq M_0=M$.
The canonical map 
$\Hom_{A\mstab}(M_i/M_{i+1},S)\to \Hom_{A\mstab}(M_i,S)$ is surjective
for all $S\in\CS$, by Lemma \ref{minimalsurj}.
Note that $M_i$ has no projective remainder for $i>0$, by
Lemma \ref{projinfilt}.
It follows that the canonical map
$\Hom_{A\mstab}(M_i/M_{i+1},S)\to \Hom_{A\mstab}(M_i,S)$ is an isomorphism
for all $S\in\CS$ (Lemma \ref{filtifinj}).

Let us now prove the other implication.
Since $M_i$ has no projective remainder for $i>0$, it follows from
Lemma \ref{minimalsurj} that
$0=M_r\subseteq M_{r-1}\subseteq\cdots\subseteq M_1$ is an $\CS$-radical
filtration of $M_1$.

\smallskip
Assume the filtration is an $\CS$-radical filtration. If $M$ has
no projective remainder, then 
$\Hom_{A\mstab}(M_0/M_1,S)\to \Hom_{A\mstab}(M_0,S)$ is injective
by Lemma \ref{filtifinj}.

Assume now that $\Hom_{A\mstab}(M/M_1,S)\to \Hom_{A\mstab}(M,S)$ is
bijective. Assume $M=M'\oplus P$ with $M'$ filtrable and $P$ projective.
We have $\Hom_{A\mstab}(M/M_1,S)\iso\Hom_{A\mstab}(M,S)\iso
\Hom_{A\mstab}(M',S)$. There is a surjective map
$g:M'\to M/M_1$ with filtrable kernel such that the composition
$M\Rarr{\can} M'\Rarr{g}M/M_1$ is equal to the canonical
map $M\to M/M_1$ in the stable category, by Proposition
\ref{head}. By Lemma \ref{stableker}, 
we have $M_1\simeq \ker g\oplus P$. Since $M_1$ has no projective
remainder by the first part of the proposition, we get $P=0$, hence
$M$ has no projective remainder.
\end{proof}

Let $\CT=A\mstab$. Note that $\CS$ is determined by its image in
$\CT$ and it satisfies Hypothesis \ref{hyp4} if and only if
$\Hom_\CT(S,T)=k^{\delta_{ST}}$ for all $S,T\in\CS$ and
every object of $\CT$ is an iterated extension of objects of $\CS$.

We have a functor $\CG\to\CF$~:
it sends
a module $M$ with an $\CS$-radical filtration
$0=M_r\subseteq M_{r-1}\subseteq\cdots\subseteq M_0=M$ to
$\cdots\to 0\to M_{r-1}\to\cdots\to M_1\to M\to M/M_1$ (cf Proposition
\ref{filtinstab}).

\begin{prop}
The canonical functor $\CG\iso\CF$ is an equivalence.
\end{prop}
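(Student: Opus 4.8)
The plan is to show the functor $\CG\to\CF$ is fully faithful and essentially surjective, using the two characterizations established just above: Proposition \ref{filtinstab} tells us which $\CS$-filtrations are $\CS$-radical filtrations, and the $\CF$-side is governed by conditions (i)--(iv) in \S\ref{filtT}. First I would check the functor is well defined, i.e.\ that a module $M$ with an $\CS$-radical filtration genuinely lands in $\CF$: conditions (i) and (ii) are immediate from the short exact sequences $0\to M_{i+1}\to M_i\to \Mbar_i\to 0$ read as distinguished triangles in $\CT=A\mstab$, and conditions (iii) and (iv) are exactly the first two bullets of Proposition \ref{filtinstab} together with the observation that the ``no projective remainder'' hypothesis on $M_i$ for $i>0$, combined with Lemma \ref{filtifinj}, upgrades surjectivity to bijectivity at each stage $i\ge 1$; at stage $0$ we only get surjectivity, which is what (iii) requires. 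Note $\Mbar_i=M_i/M_{i+1}$ plays the role of $N_i=\cone(f_{i+1})$.

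Next I would prove full faithfulness. On morphisms, $\Hom_\CG(M,N)_i$ is defined as the image of $\Hom_A^f(M,N_i)$ in $\Hom_{A\mstab}(\Mbar_0,\Nbar_i)$, while $\Hom_\CF(M,N)_i$ consists of those $g\in\Hom(N_0,N'_i)$ lifting through $\eps'_i$ to a map $M_0\to M'_i$. The key point is that a filtered map $\tf:M\to N_i$ is exactly the data of a compatible family of maps $M_j\to (N_i)_j$, and by Lemma \ref{indeplift} the induced map $\phi_0(\tf)\in\Hom_{A\mstab}(\Mbar_0,\Nbar_i)$ determines all the $\phi_j(\tf)$; conversely, given $g\in\Hom_\CF(M,N)_i$, property (iv) lets one build (uniquely up to stable equivalence) the whole ladder of maps $M_j\to (N_i)_j$ descending to $g$, which is precisely a class in $\Hom_\CG(M,N)_i$. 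Comparing the two composition laws is then routine: both are ``lift, compose, push down'', and the independence-of-lift statements (Lemma \ref{indeplift} on the $\CG$ side, the two lemmas of \S\ref{filtT} on the $\CF$ side) make them agree. So the functor induces an isomorphism $\Hom_\CG(M,N)_i\iso\Hom_\CF(M,N)_i$ for every $i$, hence on the full $\Hom$-spaces.

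For essential surjectivity, let $M=(\cdots\to M_1\Rarr{f_1}M_0\Rarr{\eps_0}N_0)$ be an object of $\CF$. Working in $\CT=A\mstab$, I would replace each $f_i:M_i\to M_{i-1}$ by an honest injection of $A$-modules (using Lemma \ref{stableker}, or rather its module-theoretic consequences developed in \S4.2, to realize the stable maps by genuine injections after adding projectives), so that the diagram becomes a genuine filtration $0=M_r\subseteq\cdots\subseteq M_0$ of an $A$-module with successive quotients in $\add\CS$; properties (iii) and (iv) then say, via Proposition \ref{filtinstab}, that this is an $\CS$-radical filtration, hence an object of $\CG$ mapping to (an object isomorphic in $\CF$ to) $M$. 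The main obstacle is this last rectification step: turning the abstract diagram in the triangulated category $A\mstab$ into a filtration by submodules in $A\mMod$, with the projective-remainder bookkeeping under control so that conditions (iii)--(iv) really do translate into the minimality conditions of an $\CS$-radical filtration. This is where Lemmas \ref{stableker}, \ref{nonprojsurj}, \ref{filtrationkercoker} and Proposition \ref{decomposition} must be invoked carefully, inducting down the filtration; once the filtration is rectified, identifying it as $\CS$-radical is a direct application of Proposition \ref{filtinstab}.
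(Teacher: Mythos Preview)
Your overall strategy matches the paper's: show full faithfulness and essential surjectivity, with Proposition \ref{filtinstab} as the bridge between the $\CG$-side conditions and the $\CF$-side conditions (iii)--(iv). The paper in fact dismisses full faithfulness in a single sentence (``clearly fully faithful''), so your detailed treatment there goes beyond what the paper provides.

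Where you diverge is in essential surjectivity, and there is a gap. You identify the main obstacle as the rectification step---lifting the stable diagram to an honest filtration in $A\mMod$---and propose to invoke Lemmas \ref{stableker}, \ref{nonprojsurj}, \ref{filtrationkercoker} and Proposition \ref{decomposition}. In fact that step is elementary: one simply adds projective summands so that each $f_i$ becomes injective with quotient having no projective summand, yielding an $\CS$-filtration $0=M'_r\subseteq\cdots\subseteq M'_0$. The genuine obstacle, which you label ``projective-remainder bookkeeping'' but do not resolve, is verifying the third bullet of Proposition \ref{filtinstab}: that $M'_i$ has no projective remainder for $i>0$. Without this, conditions (iii)--(iv) alone do not let you conclude the filtration is $\CS$-radical.

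The paper handles this by a short contradiction using \emph{both} halves of Proposition \ref{filtinstab}. Suppose some $M'_i$ with $i\ge 1$ has a projective remainder, and take $i$ maximal. Then $0=M'_r\subseteq\cdots\subseteq M'_i$ satisfies all three bullets of Proposition \ref{filtinstab} (the Hom condition at its bottom stage is the original condition (iv) at level $i$, which gives bijectivity, hence in particular surjectivity), so it is an $\CS$-radical filtration of $M'_i$. Now the second half of Proposition \ref{filtinstab} applies: since that same map is an isomorphism, $M'_i$ has no projective remainder---contradiction. This is precisely where the asymmetry between condition (iii) (mere surjectivity at stage $0$) and condition (iv) (bijectivity for $i\ge 1$) is exploited, and it replaces the inductive bookkeeping you gesture at.
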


\begin{proof}
The functor is clearly fully faithful.

Start with $0=N_r\Rarr{f_r} N_{r-1}\to \cdots\to
N_1\Rarr{f_1} N_0\Rarr{\eps_0} M_0$. Adding a projective direct summand
to the $N_i$'s, we can lift the maps
$f_i$ to maps that are injective in the module category and such that
the successive quotients have no projective direct summands.
So we have a filtration
$0=M'_r\subseteq M'_{r-1}\subseteq\cdots\subseteq M'_1\subseteq
M'_0$ such that $M'_i/M'_{i+1}$ is stably isomorphic to a direct sum of
objects of $\CS$. Since it has no projective summand, it is actually
isomorphic to a sum of objects of $\CS$; \ie, we have an
$\CS$-filtration. Consider $i$ maximal such that $M'_i$ has a projective
remainder. Then $0=M'_r\subseteq M'_{r-1}\subseteq\cdots\subseteq M'_i$
is an $\CS$-radical filtration by Proposition \ref{filtinstab} (first part).
The second part of Proposition \ref{filtinstab} shows now that
$M'_i$ has no projective remainder, a contradiction.
So the filtration is an $\CS$-filtration.
\end{proof}

\end{document}